\documentclass[12pt]{article}
\usepackage[margin=0.6in]{geometry}
\usepackage{amsmath,amssymb,amsthm,mathtools}
\usepackage{bbm}
\usepackage[colorlinks=true,linkcolor=blue,citecolor=blue,urlcolor=blue]{hyperref}
\newtheorem{theorem}{Theorem}[section]
\newtheorem{proposition}[theorem]{Proposition}
\newtheorem{corollary}[theorem]{Corollary}
\newtheorem{lemma}[theorem]{Lemma}
\theoremstyle{definition}
\newtheorem{definition}{Definition}
\newtheorem{openproblem}{Open problem}
\theoremstyle{remark}

\numberwithin{equation}{section}

\def\<{\langle}\def\>{\rangle}

\newcommand{\R}{\mathbb R}
\newcommand{\N}{\mathbb N}
\newcommand{\Z}{\mathbb Z}
\newcommand{\E}{\mathbb E}
\newcommand{\Pp}{\mathbb P}
\newcommand{\Var}{\textrm{\rm var}}
\newcommand{\Cov}{\textrm{\rm cov}}
\newcommand{\dimH}{\dim_{\rm H}}
\newcommand{\Graph}{\textrm{\rm graph}}

\newcommand{\1}{\mathbbm 1}
\newcommand{\calH}{\mathcal H_H}
\newcommand{\dd}{\mathrm d}
\newcommand{\fp}[1]{\{#1\}}
\newcommand{\abs}[1]{\left|#1\right|}
\newcommand{\norm}[1]{\left\|#1\right\|}
\newcommand{\floor}[1]{\left\lfloor#1\right\rfloor}

\begin{document}

\title{Sample Path Properties of the\\ Fractional Wiener--Weierstrass Bridge II}
\author{\normalsize Alexander Schied\thanks{Department of Statistics and Actuarial Science, University of Waterloo. E-mail: {\tt aschied@uwaterloo.ca}}
	 \and\setcounter{footnote}{6}
\normalsize	Zhenyuan Zhang\thanks{
		Department of Mathematics, Stanford University. E-mail: {\tt zzy@stanford.edu}
	 		\hfill\break The first author gratefully acknowledges financial support  from the
 Natural Sciences and Engineering Research Council of Canada through grant  RGPIN-2024-03761.
The second author was supported by the Jump Trading Fellowship.  }}
\date{\normalsize June 10, 2026}
\maketitle

\begin{abstract}
Fractional Wiener--Weierstrass bridges are a class of Gaussian processes obtained by replacing trigonometric functions in the construction of classical Weierstrass functions by fractional Brownian bridges. A number of their sample path properties were derived in Schied--Zhang (2024,\,2026). The analysis in these papers left several open questions, most of which are addressed here. Specifically, we prove that, in the regime in which the Weierstrass mechanism dominates the underlying fractional Brownian bridge, the limiting $b$-adic variation coefficient has an absolutely continuous distribution and is therefore genuinely random. At the critical point between the two roughness regimes, we establish the power-variation formula and the critical $\Phi$-variation limit conjectured in Schied--Zhang (2024). Finally, we derive the Hausdorff dimension for the graphs of the sample paths by proving a conjecture from Schied--Zhang (2026) for the missing high-Hurst case.
\end{abstract}

\noindent\textit{Keywords:} Fractional Wiener--Weierstrass bridge; Gaussian process; $p^{\text{th}}$ variation; $\Phi$-variation; Hausdorff
dimension.

\medskip
\noindent\textit{2020 Mathematics Subject Classification:} 60G22, 60G15,
60G17, 28A80, 60F15.

\section{Introduction}\label{sec:intro}

Interest in  stochastic processes whose sample paths may be rougher or smoother than Brownian motion,  has grown substantially in recent
years.  Several factors have contributed to this development. For instance, progress in areas such as  rough path theory, models based on fractional Brownian motion, and pathwise It\^o calculus for trajectories with $p^{\text{th}}$ variation for $p>2$ as in \cite{ContPerkowski2019} has stimulated further study. At the same time, practical applications, such as rough volatility modeling \cite{bayer2023rough}, have  been emphasizing their real-world importance.

Roughness is also a central theme in fractal
geometry.  The classical Weierstrass function and its variants provide basic
examples of continuous functions with highly irregular graphs, and their moduli
of continuity, variation, and Hausdorff dimensions have been studied by a range
of deterministic and probabilistic methods \cite{BaranskiBaranyRomanowska2014,Gamkrelidze1984,Hardy1916,SchiedZZhang,Shen2018}. Weierstrass-type fractal functions $f$ are usually defined as
\begin{equation}\label{eq:f-intro-eq}
f(t):=\sum_{n=0}^\infty \alpha^n\phi(\fp{b^n t}),\qquad 0\le t\le 1,
\end{equation}
where $\alpha\in(0,1)$, $b\in\{2,3,\dots\}$, $\phi: [0,1]\to\mathbb R$ is a Lipschitz continuous function with $\phi(0)=\phi(1)$, and~$\fp{x}$ denotes the fractional part of $x\ge0$.

Fractional Wiener--Weierstrass bridges, as introduced in  \cite{SchiedZhang2024}, combine  two sources of
fractality by replacing the Lipschitz function $\phi$ in the Weierstrass construction \eqref{eq:f-intro-eq} with the sample paths of a fractional Brownian bridge $B_H$ with Hurst parameter $H$. This construction gives rise to a Gaussian stochastic process $Y$ of the form 
\[   Y(t)=\sum_{m=0}^\infty \alpha^m B_H(\{b^m t\}),\qquad 0\le t\le1,
\]
called the \emph{fractional Wiener--Weierstrass bridge}. The stochastic process $Y$ has highly non-stationary increments, is not self-similar, and derives its
roughness from the competition between the Hurst exponent $H$ and the
roughness exponent $K:=\min\{1,-\log_b\alpha\}$ of the classical Weierstrass function~\eqref{eq:f-intro-eq}: As a matter of fact, it follows from the results in \cite{SchiedZhang2024,SchiedZhang2026} that the sample path properties of $Y$ have distinct qualitative features in the two regimes $K<H$ and $K>H$, and that additional phenomena arise in the critical case $K=H$. 

In \cite{SchiedZhang2024}, the authors identified the $b$-adic variation
exponents of $Y$ for $H\neq K$, proved that Weierstrass bridges are never semimartingales, and
showed that their covariance functions can themselves have a fractal structure. 
The  subsequent paper \cite{SchiedZhang2026} studies Wiener--Young
$\Phi$-variation, local and uniform moduli of continuity, Hausdorff dimension,
nowhere differentiability, and the distribution of the maximum location. 
The outlook sections of \cite{SchiedZhang2024} and \cite{SchiedZhang2026}
formulated several open questions, which could not be settled at the time, and formulated corresponding conjectures. 

The
present paper addresses the conjectures summarized below. The first open problem is based on \cite[Theorem 2.3]{SchiedZhang2024}, which states  that the process $Y$ has a deterministic linear $(1/H)$-variation in the regime
$H<K$, while for $H>K$ the limiting $(1/K)$-variation is
$Vt$, where $V$ is a finite positive random variable. Recall that for $f:[0,1]\to\R$ and $p\geq 1$, the $p$-th variation of $f$ (on the sequence of $b$-adic partitions, where $b\geq 2$) is defined as 
\begin{equation}\label{pth variation}
\lim_{n\uparrow\infty}\sum_{k=0}^{\lfloor tb^n\rfloor}\big|f((k+1)b^{-n})-f(kb^{-n})\big|^p,\qquad t\in[0,1],
\end{equation}
provided that the limit exists for all $t$.

\begin{openproblem}\label{op:nondegeneracy}
In the regime $H>K$, decide whether the random coefficient $V$ in the
limiting $(1/K)$-variation is non-degenerate, or whether it is in fact a
deterministic constant.
\end{openproblem}

Based on numerical experiments, it was conjectured in \cite{SchiedZhang2024}  that $V$  is a non-degenerate random variable and not merely a constant in disguise. Theorem~\ref{thm:op1}, our first main result, confirms this conjecture by proving
more than non-degeneracy: the random variable $V$ has an absolutely
continuous distribution and all finite moments.

The next two open problems concern the $b$-adic variation of $Y$ in the critical case $H=K$. 

\begin{openproblem}\label{op:critical-variation}
In the critical case $H=K$, find a normalizing sequence $(a_n)_{n\in\mathbb N}$ such that 
\begin{equation*}
 a_n\sum_{k=0}^{\floor{tb^n}-1}   |Y((k+1)b^{-n})-Y(kb^{-n})|^{1/H}
\end{equation*}
converges almost surely  to a finite limit for all $t\in[0,1]$. 
\end{openproblem}

In \cite[Theorem 2.4]{SchiedZhang2024}, the preceding problem was only solved in the Brownian case $H=K=1/2$ by showing that the $b$-adic quadratic variations, if rescaled by $a_n=1/n$, converge almost surely to the linear function $t$. For general $H$, it was conjectured that the correct scaling is $a_n=n^{-1/(2H)}$ and that the limit should be given by $2^{1/(2H)}\Gamma((1+H)/2H)t/\sqrt{\pi}$.  Theorem \ref{thm:critical-power} in the present paper now proves this conjecture. Since $\lim_na_n=0$, it follows in particular that the unscaled $p^{\text{th}}$-variation of $Y$ is infinite for $p=1/H=1/K$. This behavior clearly distinguishes the critical case from the case $H\neq K$, for which \cite[Theorem 2.3]{SchiedZhang2024} establishes the almost-sure convergence of the unscaled $p^{\text{th}}$-variation to a finite limit for $p=1/(H\wedge K)$. This led to the question whether the critical variation should be expressed through Wiener--Young-type $\Phi$-variation, where the function $|\cdot|^p$ is replaced by $\Phi(|\cdot|)$ in \eqref{pth variation}:

\begin{openproblem}\label{op:critical-Phi-variation} In the critical case $H=K$, identify a suitable function $\Phi$ such that the  $b$-adic  Wiener--Young-type $\Phi$-variation converges  almost surely to a finite limit. 
\end{openproblem}

It was conjectured in \cite{SchiedZhang2024} that one can take $\Phi(x)=x^{1/H}(-\log x)^{-1/(2H)}$ and that the limiting $\Phi$-variation is of the form $ {2^{1/(2H)}\Gamma(\frac{H+1}{2H})}
      (\pi\,(-\log\alpha)^{1/H})^{-1/2}\cdot t$. This conjecture is now proved in Theorem~\ref{thm:critical-phi}. As a corollary, we conclude that the roughness exponent of the sample paths of $Y$ is almost surely given by $H\wedge K$.
      
      Our next open problem is based on a conjecture from \cite{SchiedZhang2026} and now solved in Theorem~\ref{thm:hausdorff-high-hurst}.

\begin{openproblem}\label{op:hausdorff}
Prove that the Hausdorff dimension of the graph of $Y$ 
in the high-Hurst range $H>K\vee(1/2)$, $K\le2H-1$ is given by $\max\{2-H,2-K\}$. \end{openproblem}

Combined with \cite[Theorem 2.6]{SchiedZhang2026}, Theorem~\ref{thm:hausdorff-high-hurst} yields that the Hausdorff dimension of the graph of $Y$ is  almost surely given by   $\max\{2-H,2-K\}$ for all values of $H$ and $K$. This result is now formulated 
in Corollary \ref{cor:full}.

The proofs of our results rely on  a few complementary mechanisms summarized below. 
\begin{itemize}
    \item 
For $H>K$, a \lq\lq randomized-digits" representation from \cite{SchiedZZhang,SchiedZhang2024} expresses the  coefficient $V$ as an average of powers of Gaussian
linear functionals.  After projecting the Gaussian Hilbert space onto a suitable
one-dimensional direction, $V$ becomes, conditionally on the orthogonal
Gaussian field, a strictly convex $C^1$-function of a single standard normal variable;
this proves absolute continuity of its distribution.

 \item For the critical case $H=K$, the almost-sure convergence of the rescaled power variation is proved
by a second-moment method.  The expectation is computed from the randomized-digits representation, while the variance is controlled through two-scale
fractional Brownian covariance estimates, a Frobenius norm bound, and a Hermite
expansion estimate for powers of correlated Gaussian variables. 

 \item The
$\Phi$-variation statement follows by combining the convergence of the rescaled power variation  with a uniform estimate of the modulus of continuity.

 \item Our formula for the Hausdorff dimension of the graph is
proved via a lower $L^2$
increment estimate on large Cantor-type subsets whose $b$-adic orbits avoid the
discontinuities of the fractional part map.  This estimate then facilitates a
Frostman energy argument, which establishes the lower bound of the Hausdorff dimension; the upper bound follows directly from the moduli of continuity. 
\end{itemize}

Thus, all results in this paper rely essentially on suitable bounds for the covariance of the fractional Wiener--Weierstrass bridge. These bounds are delicate, due to the fractal nature of the covariance function (see Proposition~2.6 and Corollary~2.7 in \cite{SchiedZhang2024}).
The rest of the paper is organized as follows.  Section~\ref{sec:main-results}
states the main results.  Section~\ref{sec:proofs} gives the proofs.  
\section{Main results}\label{sec:main-results}

On a given probability space $(\Omega,\mathcal F,\Pp)$, let $W_H=\{W_H(t):0\le t\le1\}$ be a continuous  fractional
Brownian motion with Hurst parameter $H\in(0,1)$.
Furthermore, let $\kappa:[0,1]\to[0,1]$ be a deterministic function satisfying
$        \kappa(0)=0$, $\kappa(1)=1$,
and suppose that $\kappa$ is H\"older continuous with some exponent
$  \tau\in(H,1]$.
The corresponding fractional Brownian bridge is
\begin{equation*}
   B_H(u):=W_H(u)-\kappa(u)W_H(1),\qquad 0\le u\le1.
\end{equation*}
 For instance, with the choice
$\kappa(t):=\frac{1}{2}(1+t^{2H}-(1-t)^{2H})
$,
 the law of $B_H$ is equal to the law of $W_H$ conditioned on the event $\{W_H(1)=0\}$; see~\cite{GasbarraSottinenValkeila}. However, the specific form of $\kappa$ will not be needed in the sequel. Both $\kappa$ and $B_H$ will be fixed throughout this paper. We denote by $\{x\}$  the fractional part of $x\ge0$.
 
 \begin{definition}
\label{WW}   \cite{SchiedZhang2024} For $\alpha\in(0,1)$ and $b\in\{2,3,\dots\}$, the stochastic process
\begin{equation}\label{eq:ww}
Y(t):=
\sum_{n=0}^\infty \alpha^nB_H(\{b^n t\}),\qquad 0\le t\le 1,
\end{equation}
is called the \emph{fractional Wiener--Weierstrass  bridge} with parameters $\alpha$, $b$, and $H$.\footnote{The same process
is denoted by $X$ in \cite{SchiedZhang2024}; we use the notation of
\cite{SchiedZhang2026}.}
\end{definition}

Since $B_H$ is continuous and $B_H(0)=B_H(1)=0$, the series in
\eqref{eq:ww} converges uniformly for every sample path.   For a continuous function $f$ and $q>0$, write
\begin{equation*}
 V_n^{(q)}(f,t)=\sum_{k=0}^{\floor{tb^n}-1}|f((k+1)b^{-n})-f(kb^{-n})|^q,\qquad 0\le t\le1.
\end{equation*}
Theorem~2.3 of \cite{SchiedZhang2024} states that if 
\[H<K :=\min\{1,-\log_b\alpha\},
\]
 then  the $b$-adic $(1/H)^{\text{th}}$ variation of $Y$ is a deterministic linear function of $t$, namely,
\begin{equation}\label{eq:det-var}
V_n^{(1/H)}(Y,t)\longrightarrow\frac{2^{1/(2H)}\Gamma\big(\frac{H+1}{2H}\big)} {\sqrt\pi\,(1-\alpha^2b^{2H})^{1/(2H)}}\,t,     \qquad 0\le t\le1,\text{ $\Pp$-a.s.}
\end{equation}
If  $H>K$, then the same theorem 
 yields the existence of a
finite and strictly positive --- and possibly random --- coefficient $V$ such that $\Pp$-a.s.,
\begin{equation}\label{eq:V-def}
  V_n^{(1/K)}(Y,t)\longrightarrow Vt,  \qquad 0\le t\le1.
\end{equation}
The following theorem shows that the coefficient $V$ is a non-degenerate random variable, thereby resolving Open Problem~\ref{op:nondegeneracy}.

\begin{theorem}\label{thm:op1}
Assume $H>K$ and let $V$ be the limiting random
 coefficient in \eqref{eq:V-def}. Then $V\in L^q$ for every
$1\le q<\infty$, and the law of $V$ is absolutely continuous with respect
to Lebesgue measure.  In particular, $V$ is not almost surely constant.
\end{theorem}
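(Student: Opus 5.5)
The plan is to use the ``randomized-digits'' representation of $V$ from \cite{SchiedZhang2024} (see also \cite{SchiedZZhang}). When $H>K$ we have $\alpha b=b^{1-K}\ge1$, and the $b$-adic increments of $Y$ at level $n$ are dominated by the coarse terms $m\le n$ of the series. Rescaling by $\alpha^{-n}$ turns each increment $Y((k+1)b^{-n})-Y(kb^{-n})$ into a linear functional of $B_H$: it is a weighted sum of increments of $B_H$ over the $b$-adic intervals determined by the digits of $k$, plus a negligible tail. Averaging over $k$ uses the ergodicity of the digit shift, which is how the limit in \eqref{eq:V-def} was obtained in \cite[Theorem 2.3]{SchiedZhang2024}. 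It gives
\[
V=\E_\xi\bigl[\,|Z(\xi)|^{1/K}\bigr],\qquad Z(\xi)=\sum_{m\ge0}\alpha^{-m}b^{-m}\bigl(\text{increment of }B_H\text{ determined by the digits }\xi_1,\dots,\xi_{m+1}\bigr),
\]
where $\xi=(\xi_i)$ are i.i.d.\ uniform digits independent of $B_H$, and $\E_\xi$ integrates over them only. Thus $V=\int |L_\xi(B_H)|^{1/K}\,\mu(d\xi)$, where $\mu$ is a probability measure and each $L_\xi$ is a centered Gaussian linear functional in the Gaussian Hilbert space $\mathcal H$ of $B_H$. I take the precise form of this representation from \cite{SchiedZhang2024} as given.

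\textbf{Integrability.} By Minkowski's inequality (or Jensen's inequality when $q\ge1$), $\E[V^q]\le \int \E|L_\xi|^{q/K}\,\mu(d\xi)\le C_q\sup_\xi \|L_\xi\|_{L^2}^{q/K}$. So it suffices to bound $\sup_\xi \Var(L_\xi)$ uniformly. Each summand has variance of order $(\alpha b)^{-2m}b^{-2Hm}\cdot b^{2m}$ up to constants, coming from fBm increments over intervals of length $b^{-m}$ rescaled. Writing $\alpha b=b^{1-K}$, the sum converges geometrically when $H>K$. The bridge correction through $\kappa$ is handled by its Hölder exponent $\tau>H$. Combining the two bounds gives $V\in L^q$ for every $q<\infty$.

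\textbf{Absolute continuity.} Pick a direction $e\in\mathcal H$ with $\|e\|=1$, and write $L_\xi=c(\xi)G+R_\xi$, where $G=\<L,e\>$ is standard normal, $c(\xi)=\<L_\xi,e\>$, and $R_\xi$ is independent of $G$. Conditionally on the orthogonal field $\mathcal G=\sigma(R_\xi:\xi)$, we have $V=F(G)$ with $F(x)=\int |c(\xi)x+R_\xi|^{1/K}\,\mu(d\xi)$. Since $1/K>1$, $F$ is convex and $C^1$. It is strictly convex provided $\mu(\{\xi: c(\xi)\neq0\})>0$, because then $F$ is not affine on any interval (the kinks of $|\cdot|^{1/K}$ are smeared but the strict convexity of $|y|^{p}$, $p>1$, survives integration). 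A strictly convex $C^1$ function has strictly increasing derivative, hence $F'$ vanishes at most at one point. Therefore $F$ is piecewise strictly monotone on two intervals, and the pushforward of a Gaussian law under $F$ is absolutely continuous. Integrating over $\mathcal G$ yields absolute continuity of the law of $V$, and in particular $V$ is not a.s.\ constant.

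\textbf{Main obstacle.} The hard step is choosing $e$ so that $c(\xi)\ne0$ on a set of positive $\mu$-measure, together with justifying that $F$ is $C^1$ with the derivative passing under the integral. For the latter, $\E_\xi|c(\xi)|\,|c(\xi)x+R_\xi|^{1/K-1}$ must be finite locally uniformly in $x$, which follows from the moment bounds above holding a.s.\ via Fubini. For the former, I would first try $e$ proportional to the increment $B_H(b^{-1})-B_H(0)$, or to the leading-digit term of $L_\xi$. Because fBm increments on a fixed interval are non-degenerate and correlate nontrivially with the $m=0$ summand, I would then check that $c(\xi)$ is a nonzero continuous function of the digits on some cylinder set. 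If cancellation is a risk, taking $e=L_{\xi^*}/\|L_{\xi^*}\|$ for a fixed $\xi^*$ gives $c(\xi^*)=\|L_{\xi^*}\|>0$. Continuity of $\xi\mapsto L_\xi$ in $\mathcal H$, which follows from the geometric tail bound in the integrability step, then gives $c>0$ on a neighborhood of $\xi^*$, and that neighborhood has positive $\mu$-measure.
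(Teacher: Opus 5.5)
Your proof is correct in outline, with one input you leave unjustified. The absolute-continuity part is the paper's argument: project onto a unit vector $e\in\calH$, condition on the orthogonal Gaussian field, and note that $x\mapsto\int|R_\xi+c(\xi)x|^{1/K}\,\mu(\dd\xi)$ is $C^1$ and strictly convex once $\mu(c\neq0)>0$.

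You differ from the paper in two places.
\begin{itemize}
\item \emph{Integrability.} The paper proves a pathwise domination $V\le\big(M_\gamma(B_H)/(1-b^{K-\gamma})\big)^{1/K}$ (Lemma~\ref{lem:op1-detbound}). It then bounds all moments of $M_\gamma(B_H)$ by a union bound with Gaussian tails over all $b$-adic increments (Lemma~\ref{lem:op1-moment}). You instead use Jensen in $\xi$, Tonelli, and the fact that each $L_\xi$ is centered Gaussian with $\sup_\xi\|L_\xi\|_{L^2}<\infty$. This gives $\E[V^q]\le c_{q/K}\sup_\xi\|L_\xi\|_{L^2}^{q/K}$, where $c_r$ is the $r$-th absolute moment of a standard normal. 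Your route is shorter and reuses the uniform $\calH$-bound the paper needs later anyway; you only have to note that, for each fixed $\xi$, the pathwise series agrees a.s.\ with its $L^2$-limit. The paper's route buys a stronger pathwise bound, giving Gaussian-type tails for $V^K$.
\item \emph{Choice of $e$.} The paper combines $\|h_u\|_{\calH}>0$ for every $u$ (from \cite[Lemma~3.1]{SchiedZhang2024}) with an implicit basis argument. Your fallback $e=L_{\xi^*}/\|L_{\xi^*}\|$ uses uniform continuity of $\xi\mapsto L_\xi$: the first $M$ digits determine $L_\xi$ up to an $L^2$-error of order $\sum_{m>M}b^{(K-H)m}$. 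Hence $c>0$ on a cylinder of positive $\mu$-measure. This is a clean, concrete alternative.
\end{itemize}

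Three points need fixing.
\begin{itemize}
\item \emph{Nondegeneracy.} You never show that some $\xi^*$ has $\|L_{\xi^*}\|_{L^2}>0$, and without it $V\equiv0$ and the conclusion fails. Your remark about correlation with the $m=0$ summand does not rule out cancellation across scales; this is exactly why the paper cites \cite[Lemma~3.1]{SchiedZhang2024}. The gap closes in one line from the strict positivity of $V$ in \cite[Theorem~2.3]{SchiedZhang2024}. By Tonelli, $\E[V]=c_{1/K}\int\|L_\xi\|_{L^2}^{1/K}\,\mu(\dd\xi)$, so $V>0$ forces $\mu(\|L_\xi\|_{L^2}>0)>0$.
\item \emph{Pushforward step.} Being ``piecewise strictly monotone'' is not what makes the pushforward absolutely continuous; a strictly increasing continuous map can push Lebesgue measure onto a singular measure. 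The operative fact is that $F'\neq0$ except at most at one point. Then on each of the two open half-lines $F$ is a $C^1$-diffeomorphism with locally Lipschitz inverse, so preimages of null sets are null. This is what the paper takes from \cite[Lemma~7.25]{Rudin1987RealComplex}.
\item \emph{Weight.} The weight $\alpha^{-m}b^{-m}$ in your displayed $Z(\xi)$ is wrong. Since $\alpha^{n/K}=b^{-n}$, the normalized increment is $\sum_{r\le n}\alpha^{-r}\Delta B_H(I_{r,R_r})$ with $\alpha^{-r}=b^{Kr}$. Your variance count $b^{-2(H-K)m}$, including the unexplained factor $b^{2m}$, is the correct one for this weight, so nothing downstream breaks.
\end{itemize}
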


As already noted in \cite[Remark 3.2]{SchiedZhang2024}, the random variable $V$ can be represented as a certain mixture of $(1/K)^{\text{th}}$ powers of the absolute values of certain Wiener integrals with integrator $W_H$. This representation will be the starting point for our proof of Theorem \ref{thm:op1}, which is given in Section \ref{subsec:proof-op1}. An illustration of the empirical distribution of $V$ can be found in the two histograms of \cite[Figure 1]{SchiedZhang2024}.

We next turn to Open Problem~\ref{op:critical-variation}. It concerns the $b$-adic power-variation
normalization  in the critical case $H=K$. While the correct normalization for the case $H=K=1/2$ was identified in \cite[Theorem 2.4]{SchiedZhang2024}, the general case remained open. The following theorem identifies   the correct $b$-adic power-variation
normalization for arbitrary $H\in(0,1)$.

\begin{theorem}[Critical rescaled power variation]\label{thm:critical-power}
Assume $H=K$.  Then, $\Pp$-a.s.~for every $t\in[0,1]$,
\begin{equation}\label{eq:critical-Phi-variation}
\lim_{n\uparrow\infty}
\frac1{n^{1/(2H)}}
\sum_{k=0}^{\floor{tb^n}-1}
\abs{Y((k+1)b^{-n})-Y(kb^{-n})}^{1/H}
=
\frac{2^{1/(2H)}\Gamma\big(\frac{H+1}{2H}\big)}{\sqrt\pi}\,t.
\end{equation}
In particular, almost all sample paths of $Y$ have infinite $b$-adic $(1/H)^{\text{th}}$ variation:
\begin{equation}
 V_n^{(1/H)}(Y,t)  \longrightarrow+\infty\qquad\text{for every $t\in(0,1]$ $\Pp$-a.s.}\label{eq:infinite-QV}
\end{equation}
\end{theorem}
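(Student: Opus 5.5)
We prove the statement by a second-moment method combined with Borel--Cantelli along the full sequence $n$. The outline below follows the summary in the introduction.

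\emph{Increment structure.} Fix $n$ and write the increment as
\[
\Delta_{n,k}:=Y((k+1)b^{-n})-Y(kb^{-n})=\sum_{m=0}^{n-1}\alpha^m\big(B_H(\fp{b^m(k+1)b^{-n}})-B_H(\fp{b^mkb^{-n}})\big),
\]
since for $m\ge n$ both arguments lie in $\{0,1\}$ and the terms vanish. Write $k=\sum_{j}d_jb^{j}$ in base $b$. Then the $m$-th summand is an increment of $B_H$ over an interval of length $b^{m-n}$ whose left endpoint is determined by the digits $d_{n-m-1},\dots,d_0$ (the randomized-digits representation).

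Since $\alpha=b^{-H}$, the $m$-th summand has variance $\alpha^{2m}b^{2H(m-n)}(1+o(1))=b^{-2Hn}(1+o(1))$, up to bridge corrections involving $\kappa$. These corrections are of order $b^{-\tau(n-m)}$ with $\tau>H$, so they are negligible except when $n-m=O(1)$.

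The key covariance estimate is that, for $m\ne m'$, the correlation between the $m$-th and $m'$-th summands decays geometrically in $|m-m'|$ for most $k$. This holds because the shorter increment sits inside a much longer interval of $W_H$, and the covariance of fBm increments at two scales $\ell\ll L$ is bounded by $\ell^{H}L^{H}(\ell/L)^{1-H}$ or, for the positions generic in the digits, by a similar bound. Averaging over $k$, i.e.\ over uniform digits, gives
\[
b^{-n}\sum_k\E[\Delta_{n,k}^2]=n\,b^{-2Hn}(1+o(1)),
\]
since the cross terms summed over $m\ne m'$ contribute $O(1)\cdot b^{-2Hn}$ only. Controlling the cross terms uniformly over most $k$, uniformly in $t$, is part of the main obstacle. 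From this, $\E|\Delta_{n,k}|^{1/H}=\E|N|^{1/H}\,\sigma_{n,k}^{1/H}$ with $\E|N|^{1/H}=2^{1/(2H)}\Gamma(\frac{H+1}{2H})/\sqrt\pi$. We need $\sigma_{n,k}^2=nb^{-2Hn}(1+o(1))$ for all but a negligible proportion of $k\le tb^n$, together with the crude bound $\sigma_{n,k}^2\lesssim n b^{-2Hn}$ for all $k$. The first follows by showing that the Frobenius norm of the $n\times n$ correlation matrix of the summands is $O(\sqrt n\,\cdot)$ relative to the trace, for digit-typical $k$. Hence
\[
\E\big[n^{-1/(2H)}V_n^{(1/H)}(Y,t)\big]\to \E|N|^{1/H}\,t.
\]

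\emph{Variance.} We show $\Var\big(n^{-1/(2H)}V_n^{(1/H)}(Y,t)\big)\le Cb^{-\delta n}$ for some $\delta>0$. Apply the Hermite-expansion estimate: for standard Gaussians with correlation $\rho$,
\[
\Cov(|X|^{p},|X'|^{p})\le C\rho^2 .
\]
This reduces the problem to bounding
\[
b^{-2n}\sum_{k,k'}\rho_{n}(k,k')^2 ,
\]
where $\rho_n(k,k')$ is the correlation of $\Delta_{n,k}$ and $\Delta_{n,k'}$. Expanding $\Delta_{n,k}$ into its $n$ summands, the correlation is $n^{-1}\sum_{m,m'}c_{m,m'}(k,k')$. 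Pairs $(k,k')$ whose level-$m$ intervals are far apart (distance $r$ in units of $b^{m-n}$) give fBm increment correlation $O(r^{2H-2})$. Coarse levels $m$ close to $n$ put $k,k'$ in the same or adjacent intervals only when their top digits agree, which happens for a fraction $b^{-(n-m)}$ of pairs. Summing, the double average of $\rho^2$ is $O(n^{-1}\cdot\mathrm{polylog})$ or better, possibly only $O(1/n)$. In the latter case Chebyshev alone is not summable, so we would apply Borel--Cantelli along $n_j=j^2$ and interpolate. Interpolation is legitimate because $n^{-1/(2H)}V^{(1/H)}_n$ changes slowly in $n$: $V_{n+1}$ and $V_n$ are comparable via the power-mean inequality up to factors tending to $1$ once concentration holds. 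If that is awkward, we would instead get a $b^{-\delta n}$ bound by exploiting the fine levels $m\le n-c\log n$, where the digit-level decorrelation is exponential.

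\emph{Extension to all $t$ and the infinite variation.} Almost sure convergence for all rational $t$ simultaneously follows by a countable intersection. Monotonicity in $t$ and continuity of the limit then give convergence for every $t$. Finally, \eqref{eq:infinite-QV} is immediate: $V_n^{(1/H)}(Y,t)$ equals $n^{1/(2H)}\to\infty$ times a quantity converging to a positive constant times $t$ for $t>0$.

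The main obstacle is the variance step: the two-scale covariance bounds must be sharp enough, uniformly over digit configurations, to give $b^{-2n}\sum\rho^2\to0$ at a summable or interpolable rate.
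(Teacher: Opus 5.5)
You share the paper's overall skeleton: Gaussian moment formula for the mean, a Hermite-rank-two bound for $\Cov(|U|^p,|V|^p)$, Chebyshev plus Borel--Cantelli, and a monotone sandwich in $t$. But the variance step, which you yourself flag as the main obstacle, is never carried out, and both rates you propose for it are wrong. The target $\Var(S_n(t))\le Cb^{-\delta n}$ for $S_n(t)=n^{-1/(2H)}V_n^{(1/H)}(Y,t)$ is false in general, and no analysis of the fine levels can produce it. In $\xi_{n,k}=\alpha^{-n}\Delta_{n,k}=\sum_{r=1}^n b^{Hr}\Delta B_H(I_{r,k\bmod b^r})$, two indices with $k\equiv k'\pmod{b^v}$ share their $v$ coarsest summands exactly, and a fraction $b^{-v}$ of all pairs does so. Take $H=1/2$. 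There all $W$-level covariances are nonnegative and the bridge terms contribute $O(1)$, so $\Cov(\xi_{n,k},\xi_{n,k'})\ge v-C$ on such pairs. Hence $b^{-2n}\sum_{k,k'}\Cov(\xi_{n,k},\xi_{n,k'})^2\ge c>0$, and $\Var(S_n(1))=2n^{-2}b^{-2n}\sum_{k,k'}\Cov(\xi_{n,k},\xi_{n,k'})^2\ge cn^{-2}$. Your fallback rate $O(1/n)$ fails in the other direction: it is not summable, and the interpolation from $n_j=j^2$ is unjustified. $S_n$ is not monotone in $n$, and the power-mean inequality only gives the one-sided bound $V^{(p)}_n(Y,1)\le b^{p-1}V^{(p)}_{n+1}(Y,1)$ with a fixed factor $b^{p-1}>1$. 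So nothing controls $S_n$ for $n_j<n<n_{j+1}$.

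The missing ingredient is Lemma~\ref{lem:frobenius-covariance}: $b^{-2n}\sum_{i,j}\Cov(\xi_{n,i},\xi_{n,j})^2\le C$ and $\Var(\xi_{n,k})\le Cn$. In other words, averaged squared correlations are of order $n^{-2}$, not $n^{-1}$. The paper proves it as follows.
\begin{itemize}
\item Split $\Cov(\xi_{n,i},\xi_{n,j})=\sum_{r,s}\gamma_{r,s}(i,j)$ over level pairs, and note that for independent uniform $i,j$ the intervals $I_{r,i\bmod b^r}$ and $I_{s,j\bmod b^s}$ are independent and uniform.
\item Bound $\E|\gamma_{r,s}|^2$ by the two-scale bound on the event, of probability at most $3b^{-r}$, that the level-$r$ parent of the finer interval equals or is adjacent to the coarser one, and by the separated bound otherwise.
\item Check that the square roots are summable over $1\le r\le s$; the bridge terms are $O(\nu_r\nu_s)$ with $\sum_r\nu_r<\infty$.
\item Apply Minkowski in $\ell^2$.
\end{itemize}
With Lemma~\ref{lem:hermite} this gives $\Var(S_n(t))\le Cn^{-p}$ for $1<p\le2$ and $\le Cn^{-2}$ for $p\ge2$. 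Both are summable, so Borel--Cantelli works along the full sequence with no interpolation. For $1<p\le2$ the paper uses the $|\Cov(U,V)|^p$ form plus Jensen, which also avoids the lower bound on $\Var(\xi_{n,k})$ that your $\rho^2$ formulation needs when $p<2$.

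A related gap sits in your mean computation. Geometric decay of the level-pair correlations in $|m-m'|$ only bounds the cross terms by $O(n)$, the same order as the diagonal. A Frobenius bound on the $n\times n$ correlation matrix does not force the off-diagonal sum to be $o(n)$ either. What you need is that, after averaging over the digits, the cross terms are $O(1)$ in a suitable norm. The paper gets $\E_R[\eta_n]\le C$ for $H\ge1/2$ from nonnegativity and the injectivity of $i\mapsto b^s\{ib^{-r}\}-i$, and $\|\eta_n\|_{L^{p/2}(\Pp_R)}\le C$ for $H<1/2$. This then gives $\E_R[(n+\eta_n)^{p/2}]=n^{p/2}(1+o(1))$.
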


Note that the constant factor on the right-hand side of \eqref{eq:critical-Phi-variation} can also be expressed as $\mathbb E[|Z|^{1/H}]$, where $Z$ is a standard normal random variable.

Comparing \eqref{eq:infinite-QV} with \eqref{eq:det-var} and \eqref{eq:V-def}, one sees that  the critical case $H=K$ differs from the  sub- and supercritical cases in that   power variation at $p:=1/H=1/K$ is infinite and hence insufficient to capture the actual fluctuations of the sample paths. It was therefore conjectured in \cite{SchiedZhang2024} that, in the critical case,  the power function $x^p$ should be amended with a logarithmic correction term of the form $(-\log x)^{-p/2}$ so as to render a finite and strictly positive limit. The following theorem addresses Open Problem \ref{op:critical-Phi-variation}  by proving that conjecture.

\begin{theorem}[Critical $\Phi$-variation]\label{thm:critical-phi}
Assume $H=K$.  Let $\Phi$ be a nonnegative continuous nondecreasing
function on $[0,\infty)$ such that $\Phi(0)=0$ and
\[  \Phi(x)=x^{1/H}(-\log x)^{-1/(2H)},\qquad 0<x<\frac1e.
\]
Then, $\Pp$-a.s.~for every $t\in[0,1]$,
\[
 \lim_{n\uparrow\infty}
 \sum_{k=0}^{\floor{tb^n}-1}
 \Phi\!\left(\abs{Y((k+1)b^{-n})-Y(kb^{-n})}\right)
 =
 \frac{2^{1/(2H)}\Gamma\big(\frac{H+1}{2H}\big)}
      {\sqrt\pi\,(-\log\alpha)^{1/(2H)}}\,t .
\]
\end{theorem}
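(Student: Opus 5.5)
The plan is to deduce Theorem~\ref{thm:critical-phi} from Theorem~\ref{thm:critical-power}. The tool is a uniform modulus-of-continuity estimate which shows that all $b$-adic increments at level $n$ have size essentially $b^{-nH}$ up to logarithmic factors. Write $\Delta_{n,k}:=Y((k+1)b^{-n})-Y(kb^{-n})$.

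\textbf{Step 1: uniform bound on the increments.} From the uniform modulus of continuity of $Y$ in the critical case (available from \cite{SchiedZhang2026}, or obtained directly by a Gaussian chaining/Borel--Cantelli argument using $\E[\Delta_{n,k}^2]\lesssim n\,b^{-2nH}$), we get, $\Pp$-a.s. for all large $n$,
\[
\max_k|\Delta_{n,k}|\le C\, b^{-nH}\, n^{1/2}\sqrt{\log n}.
\]
In particular $\max_k|\Delta_{n,k}|<1/e$ eventually. Therefore $\Phi(|\Delta_{n,k}|)=|\Delta_{n,k}|^{1/H}(-\log|\Delta_{n,k}|)^{-1/(2H)}$ for all $k$.

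\textbf{Step 2: lower bound on the logarithm.} We have
\[
-\log|\Delta_{n,k}|\ \ge\ nH\log b-\tfrac12\log n-O(\log\log n)=n(-\log\alpha)(1+o(1))
\]
uniformly in $k$. Here we use $H=K$, which gives $b^{-H}=\alpha$, i.e.\ $H\log b=-\log\alpha$. (If $K=1$ then $H<1$ cannot equal $K$, so $-\log_b\alpha=H$ holds.)

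\textbf{Step 3: controlling small increments.} There is no matching uniform upper bound on $-\log|\Delta_{n,k}|$, since increments may be tiny. Tiny increments contribute little, however. Fix $\varepsilon>0$ and split the indices $k$ according to whether $|\Delta_{n,k}|\ge b^{-n(H+\varepsilon)}$ or not.
\begin{itemize}
\item On the large set, $-\log|\Delta_{n,k}|\le n(H+\varepsilon)\log b$. Hence $(-\log|\Delta_{n,k}|)^{-1/(2H)}$ lies between $(n(-\log\alpha))^{-1/(2H)}(1+O(\varepsilon))$ and $(n(-\log\alpha))^{-1/(2H)}(1+o(1))$.
\item On the small set, each term satisfies $|\Delta_{n,k}|^{1/H}\le b^{-n(1+\varepsilon/H)}$. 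There are at most $b^n$ such terms, so their total contribution to both $\Phi$-sums and to the power sums (after multiplication by $n^{-1/(2H)}$) is $O(b^{-n\varepsilon/H})\to0$.
\end{itemize}

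\textbf{Step 4: conclusion.} Combining Steps 2 and 3,
\[
\sum_{k<\floor{tb^n}}\Phi(|\Delta_{n,k}|)=(-\log\alpha)^{-1/(2H)}\,n^{-1/(2H)}\,V^{(1/H)}_n(Y,t)\,(1+O(\varepsilon)+o(1))+o(1).
\]
By Theorem~\ref{thm:critical-power}, $n^{-1/(2H)}V_n^{(1/H)}(Y,t)$ converges a.s.\ to the stated constant times $t$ for every $t$. Letting $n\to\infty$ and then $\varepsilon\downarrow0$ along a countable sequence gives the claim simultaneously for all $t$, on a single null-set complement.

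\textbf{Main obstacle.} The delicate point is Step 1: we need a uniform bound with only a polylogarithmic loss over $b^{-nH}$. Any bound of the form $b^{-nH}e^{o(n)}$ suffices, since only $\log|\Delta_{n,k}|/n\to -H\log b$ matters. I would first try to get it from the variance bound $\E[\Delta_{n,k}^2]\le Cnb^{-2nH}$. That bound comes from summing the contributions of the $\sim n$ scales, each of size $\alpha^{2m}b^{-2(n-m)H}=b^{-2nH}$, plus a tail. It then feeds a Gaussian maximal inequality over the $b^n$ indices and Borel--Cantelli in $n$.
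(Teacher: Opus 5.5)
Your proof is correct. Its skeleton matches the paper's: both reduce to Theorem~\ref{thm:critical-power} (via Lemma~\ref{lem:power-law}), and both use a uniform a.s.\ bound on the level-$n$ increments to pin the logarithmic factor at $nL_\alpha$, where $L_\alpha=-\log\alpha=H\log b$. The paper's bound is Lemma~\ref{lem:modulus}, $\max_k|\xi_{n,k}|\le C(\omega)n$, taken from \cite[Theorem~2.4(ii)]{SchiedZhang2026}.

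The two arguments differ in how the logarithm is handled.
\begin{itemize}
\item The paper writes $\Phi(|\Delta Y(I_{n,k})|)=b^{-n}|\xi_{n,k}|^p(nL_\alpha-\log|\xi_{n,k}|)^{-p/2}$. The mean-value theorem then gives a pointwise error of at most $Cn^{-p/2-1}b^{-n}|\xi_{n,k}|^p(1+|\log|\xi_{n,k}||)$. The sum of these errors is controlled by $x^p|\log x|\le C_p$ on $(0,1]$ and $|\log x|\lesssim\log n$ on the range allowed by Lemma~\ref{lem:modulus}. This yields an explicit error $O_\omega(n^{-1}\log n)$, uniform in $t$.
\item Your $\varepsilon$-threshold split removes the tiny increments by counting: there are at most $b^n$ terms, each of size at most $b^{-n(1+\varepsilon/H)}$. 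On the remaining increments you squeeze the log factor between $(1+\varepsilon/H)^{-1/(2H)}$ and $1+o(1)$ relative to $(nL_\alpha)^{-1/(2H)}$.
\end{itemize}
Your route is softer: it needs only $\log\max_k|\Delta_{n,k}|\le -nH\log b+o(n)$ and no log-moment bound. The cost is that it gives no rate and requires an extra limit $\varepsilon\downarrow0$.

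One correction to Step~1. The bound $\max_k|\Delta_{n,k}|\le C b^{-nH}n^{1/2}\sqrt{\log n}$ does not follow from the route you describe. A Gaussian maximal inequality over $b^n$ variables of variance $\lesssim n\,b^{-2nH}$ gives $b^{-nH}\sqrt{n\log(b^n)}\asymp n\,b^{-nH}$, because the logarithm of the number of indices is of order $n$. The claimed bound is also likely too optimistic. The map $k\mapsto\xi_{n,k}=\sum_{r\le n}b^{Hr}\Delta B_H(I_{r,k\bmod b^r})$ has the structure of a branching random walk over the $b$-ary digit tree, so its maximum should grow linearly in $n$.

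As you note yourself, any bound of the form $b^{-nH}e^{o(n)}$ suffices. Replacing Step~1 by $\max_k|\Delta_{n,k}|\le C(\omega)\,n\,b^{-nH}$ leaves Steps~2--4 intact. This bound follows from $\Var(\xi_{n,k})\le Cn$, a union bound over the $b^n$ indices, and Borel--Cantelli, and it is exactly Lemma~\ref{lem:modulus}.
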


Let us next state two corollaries of Theorems \ref{thm:critical-power} and \ref{thm:critical-phi}.  

\begin{corollary}\label{cor:critical-variation} For $H=K$, we have $\Pp$-a.s.~for all $t\in(0,1]$,
\begin{align*}
V_n^{(q)}(Y,t)\longrightarrow\begin{cases}+\infty&\text{for $q\le 1/H$,}\\
0&\text{for $q>1/H$.}
\end{cases}
\end{align*}
\end{corollary}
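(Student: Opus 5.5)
Both regimes of Corollary~\ref{cor:critical-variation} follow from Theorem~\ref{thm:critical-power} together with a uniform bound on the $b$-adic increments of $Y$. Write $\Delta_{n,k}:=Y((k+1)b^{-n})-Y(kb^{-n})$ and $M_n:=\max_{0\le k<b^n}|\Delta_{n,k}|$. By Theorem~\ref{thm:critical-power}, almost surely for every $t\in(0,1]$,
\[
\sum_{k=0}^{\floor{tb^n}-1}|\Delta_{n,k}|^{1/H}\sim c\,t\,n^{1/(2H)},\qquad c:=\frac{2^{1/(2H)}\Gamma\big(\frac{H+1}{2H}\big)}{\sqrt\pi}>0 .
\]
The first step is to show $M_n\to0$ a.s. This is immediate: the paths of $Y$ are continuous (the series \eqref{eq:ww} converges uniformly), hence uniformly continuous on $[0,1]$.

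For $q\le 1/H$: once $n$ is large, $M_n\le1$, so $|\Delta_{n,k}|^q\ge|\Delta_{n,k}|^{1/H}$ for each $k$. Therefore
\[
V_n^{(q)}(Y,t)\ge V_n^{(1/H)}(Y,t)\ge \tfrac12 c\,t\,n^{1/(2H)}\longrightarrow\infty .
\]
The case $q=1/H$ is exactly \eqref{eq:infinite-QV}.

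For $q>1/H$, write
\[
V_n^{(q)}(Y,t)\le M_n^{\,q-1/H}\,V_n^{(1/H)}(Y,t)\le 2c\,M_n^{\,q-1/H}\,n^{1/(2H)} .
\]
Here continuity alone is not enough: we need a quantitative decay of $M_n$ that beats $n^{1/(2H)}$. The required input is a uniform modulus of continuity bound for $Y$ in the critical case. I would take it from \cite{SchiedZhang2026}, or derive it as in the plan for Theorem~\ref{thm:critical-phi}. The bound to aim for is
\[
M_n\le C\,b^{-nH}n^{\beta}\quad\text{a.s. for some finite }\beta .
\]
Heuristically one expects $\beta=1/2$: the increment variance is about $n\,b^{-2nH}$ because each of the $n$ scales contributes comparably, and a Gaussian maximum over $b^n$ intervals adds a factor $\sqrt{n}$.

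If a ready-made bound is not available, I would prove it directly. Estimate $\E[\Delta_{n,k}^2]\le C n b^{-2nH}$ by splitting the series into the terms $m<n$, where $\{b^m\cdot\}$ is smooth on the interval and the fBm increment is of order $(b^{m-n})^H$ times $\alpha^m=b^{-mH}$, and the terms $m\ge n$, which form a geometric tail. Then a union bound over the $b^n$ Gaussian increments, together with Borel--Cantelli, gives $\beta=1$. With any such $\beta$, the right-hand side is at most
\[
C\,b^{-nH(q-1/H)}\,n^{\beta(q-1/H)+1/(2H)}\longrightarrow0,
\]
since the exponential decay dominates the polynomial factor. Because $V_n^{(q)}(Y,t)\ge0$, this proves the claim; the null sets involved do not depend on $t$. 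The only real obstacle is the quantitative bound on $M_n$, and any polynomial-in-$n$ loss in it is harmless.
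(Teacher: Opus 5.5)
Your proof is correct, but it takes a different route from the paper. The paper derives both regimes from Theorem~\ref{thm:critical-phi}, using $\Psi(x)=x^q/\Phi(x)=x^{q-1/H}(-\log x)^{1/(2H)}$ near $0$:
\begin{itemize}
\item for $q>1/H$ it bounds $V_n^{(q)}(Y,1)\le \max_k\Psi(|\Delta_{n,k}|)\sum_k\Phi(|\Delta_{n,k}|)$;
\item for $q<1/H$ it uses $x^q\ge L\,\Phi(x)$ for small $x$, with $L$ arbitrary.
\end{itemize}
At that stage the only path property it needs is uniform continuity, because the logarithmic gauge already absorbs the divergent factor $n^{1/(2H)}$.

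You instead work directly from Theorem~\ref{thm:critical-power}. For $q\le 1/H$, monotonicity of $q\mapsto x^q$ on $[0,1]$ settles the case with no further input. For $q>1/H$, you pay for the normalization $n^{1/(2H)}$ with the quantitative bound $M_n\le C(\omega)\,n\,b^{-Hn}$. That bound is exactly Lemma~\ref{lem:modulus}, i.e.\ \cite[Theorem~2.4(ii)]{SchiedZhang2026}, which the paper itself uses to prove Theorem~\ref{thm:critical-phi}. So the total input is the same in both proofs. Your route shows that the corollary does not need the $\Phi$-variation theorem and gives an explicit rate, $V_n^{(q)}(Y,t)\le C(\omega)\,b^{-n(qH-1)}n^{q-1/(2H)}$. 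The paper's route presents the corollary as a soft consequence of the sharper $\Phi$-variation statement.

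Your fallback sketch has a few slips, none of which affects the conclusion:
\begin{itemize}
\item For $b$-adic increments, the terms $m\ge n$ vanish identically; they are not merely a geometric tail.
\item Splitting into scales and applying Minkowski only gives $\E[\Delta_{n,k}^2]\le Cn^2b^{-2nH}$. The sharper bound $Cnb^{-2nH}$ requires decay of covariances across scales, as in \eqref{eq:variance-bound}.
\item Your heuristic value $\beta=1/2$ contradicts your own reasoning. A standard deviation of $\sqrt n\,b^{-nH}$ times the $\sqrt n$ from the Gaussian maximum gives $\beta=1$.
\end{itemize}
With the cruder variance bound you get $\beta=3/2$, which still suffices, since any finite $\beta$ does.
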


The preceding corollary admits the following interpretation.
Following \cite{HanSchiedHurst}, a continuous function $f:[0,1]\to\mathbb R$ admits the roughness exponent $R\in[0,1]$ if 
\begin{equation*}
V_n^{(q)}(f,1)\longrightarrow\begin{cases}+\infty&\text{for $q<1/R$,}\\
0&\text{for $q>1/R$.}
\end{cases}
\end{equation*}
It was shown in \cite{SchiedZhang2024} that the roughness exponent of the sample paths of $Y$ is almost surely equal to $H\wedge K$, provided that either $H\neq K$ or $H=K=1/2$. In conjunction with Corollary~\ref{cor:critical-variation},  this fact immediately yields the following result.

\begin{corollary}[Roughness exponent of $Y$]\label{cor:roughness-exponent} For all parameter values, the sample paths of $Y$ admit almost surely the roughness exponent $H\wedge K$.
\end{corollary}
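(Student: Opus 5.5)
The statement to prove is Corollary~\ref{cor:roughness-exponent}. The plan is a case distinction on the parameters, reducing everything to results already available. Suppose first that $H\neq K$, or that $H=K=1/2$. Then \cite{SchiedZhang2024} (as recalled just before the corollary) already gives that the sample paths of $Y$ almost surely admit the roughness exponent $H\wedge K$. Nothing more is needed in that case.

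It remains to treat $H=K$ with $H$ arbitrary. Here Corollary~\ref{cor:critical-variation}, applied at $t=1$, gives $\Pp$-a.s.
\[
V_n^{(q)}(Y,1)\to+\infty \quad\text{for } q\le 1/H, \qquad V_n^{(q)}(Y,1)\to 0 \quad\text{for } q>1/H.
\]
Both behaviours hold simultaneously for all $q$ on a single event of full probability, since the corollary is stated that way. In particular the divergence holds for all $q<1/R$ with $R=H=H\wedge K$, which is exactly the definition of roughness exponent $R$ from \cite{HanSchiedHurst}.

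If one also wants to see why Corollary~\ref{cor:critical-variation} holds, I would argue in three steps.

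First, the case $q=1/H$ is \eqref{eq:infinite-QV}. It follows from Theorem~\ref{thm:critical-power}, since $V_n^{(1/H)}(Y,t)=n^{1/(2H)}\cdot(\text{a quantity converging to } ct)$ with $c>0$, and this tends to $\infty$ for $t>0$.

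Second, for $q<1/H$ write $M_n:=\max_k|Y((k+1)b^{-n})-Y(kb^{-n})|$. The uniform modulus of continuity from \cite{SchiedZhang2026} gives $M_n\to0$ a.s. Hence
\[
|x|^{q}\ge |x|^{1/H}M_n^{q-1/H}\quad\text{whenever } |x|\le M_n,
\]
which yields $V_n^{(q)}\ge M_n^{q-1/H}V_n^{(1/H)}\to\infty$.

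Third, for $q>1/H$ use the $\Phi$-variation of Theorem~\ref{thm:critical-phi}. For small $x$,
\[
x^q=\Phi(x)\,x^{q-1/H}(-\log x)^{1/(2H)},
\]
and the factor $x^{q-1/H}(-\log x)^{1/(2H)}$ tends to $0$ as $x\downarrow0$. Combined with $M_n\to0$, this gives $V_n^{(q)}(Y,t)\le o(1)\cdot\sum_k\Phi(\cdot)\to0$, because the $\Phi$-sums converge to a finite limit.

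The only point needing care is the uniformity over $q$. The null sets involved come from the single events in Theorems~\ref{thm:critical-power} and \ref{thm:critical-phi} and from the modulus of continuity, none of which depends on $q$. So one event serves for all $q$, and there is no genuine obstacle.
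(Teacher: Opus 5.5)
Your proof is correct and takes the same route as the paper: the case $H\neq K$ or $H=K=1/2$ is quoted from \cite{SchiedZhang2024}, and the critical case $H=K$ follows from Corollary~\ref{cor:critical-variation} at $t=1$. Your optional re-derivation of that corollary also follows the paper, with one difference: for $q<1/H$ you compare directly with the divergent $(1/H)$-variation of Theorem~\ref{thm:critical-power} rather than with the $\Phi$-variation of Theorem~\ref{thm:critical-phi}. This is equally valid, and even simpler, since $|x|^q\ge|x|^{1/H}$ as soon as $|x|\le1$.
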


Our final main result concerns Open Problem~\ref{op:hausdorff}.   Theorem~2.6 in \cite{SchiedZhang2026} states that  the Hausdorff
dimension of the graph of $Y$ is almost surely equal to $(2-H)\vee(2-K)$, provided that $K>2H-1$.  It was conjectured in \cite{SchiedZhang2026} that the same formula holds for all combinations of $H$ and $K$. The following theorem establishes the dimension formula in the regime where $H>K$ and $H>1/2$, which proves the conjecture; see the subsequent Corollary~\ref{cor:full}. This is achieved by establishing another conjecture from \cite{SchiedZhang2026}, namely that for $K\leq 2H-1$,  a lower bound of the form
\[\|Y(t)-Y(s)\|_{L^2}\ge \frac1 L |t-s|^K,
\]
which cannot hold uniformly in $s,t\in[0,1]$, instead holds on Cantor-like sets $T_N$, whose Hausdorff dimension approaches 1 as $N\uparrow\infty$; see 
Proposition~\ref{prop:lower-on-TN} and Lemma~\ref{lem:SN}.

\begin{theorem}[Hausdorff dimension in the high-Hurst regime]\label{thm:hausdorff-high-hurst}
If
$  H>K$,  $H>1/2$, then  
\begin{equation*}
  \dimH\Graph(Y)=2-K
  \qquad\text{$\Pp$-a.s.}
\end{equation*}
\end{theorem}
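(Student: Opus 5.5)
The upper bound $\dimH\Graph(Y)\le 2-K$ comes from the uniform modulus of continuity in \cite{SchiedZhang2026}. For $H>K$ the sample paths are a.s.\ Hölder continuous of every order $\gamma<K$. The standard box-counting argument then gives $\dimH\Graph(Y)\le 2-\gamma$: cover the graph over each $b$-adic interval of length $b^{-n}$ by $O(b^{n(1-\gamma)})$ squares. Letting $\gamma\uparrow K$ gives the bound. So all the work is in the lower bound $\dimH\Graph(Y)\ge 2-K$ a.s. Moreover, by \cite[Theorem 2.6]{SchiedZhang2026} we may restrict to $K\le 2H-1$, although the argument below does not need this restriction.

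For the lower bound I would use a Frostman energy argument on Cantor-type sets. The $L^2$ lower bound $\|Y(t)-Y(s)\|_{L^2}\ge L^{-1}|t-s|^K$ fails uniformly because of the discontinuities of $\{b^n\cdot\}$. I therefore fix $N$ and let $T_N\subset[0,1]$ be the set of $t$ whose $b$-adic digit strings avoid a forbidden pattern, e.g.\ no block of $N$ consecutive digits equal to $0$ or to $b-1$. Then the orbit $\{b^n t\}$ stays at distance $\gtrsim b^{-N}$ from $\{0,1\}$.

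The key step, which is Proposition~\ref{prop:lower-on-TN}, is to show that $\Var(Y(t)-Y(s))\ge c_N|t-s|^{2K}$ for $s,t\in T_N$. I would prove it as follows.
\begin{itemize}
\item Choose $m$ with $b^{-m-1}<|t-s|\le b^{-m}$.
\item Condition on (equivalently, project out) the Gaussian variables generating all terms except the one at level $m$. The digit restriction ensures that $\{b^m t\}$ and $\{b^m s\}$ lie in a common interval, away from the endpoints and at distance $\asymp b^m|t-s|$.
\item Use the local nondeterminism of fBm for $H>1/2$, in the form that the conditional variance of $W_H(u)-W_H(v)$ given the values outside a neighborhood is $\gtrsim|u-v|^{2H}$, to get a contribution of order $\alpha^{2m}(b^m|t-s|)^{2H}\gtrsim|t-s|^{2K}$.
\end{itemize}
Here $H>K$ is used to handle the coupling among the levels. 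The levels are not independent, since they are all driven by the same $B_H$. The hard part is showing that the other levels cannot cancel the main term, and I would handle it through a Gram-matrix or conditional-variance lower bound rather than through independence. I expect this step to be the main obstacle.

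Given Proposition~\ref{prop:lower-on-TN}, let $\mu_N$ be the natural Cantor measure on $T_N$. It satisfies $\mu_N(I)\lesssim|I|^{d_N}$, where $d_N=\dimH T_N\to1$ (Lemma~\ref{lem:SN}). Push $\mu_N$ forward to the graph and bound the expected $\beta$-energy using the standard Gaussian estimate
\[
\E\big[(|t-s|^2+|Y(t)-Y(s)|^2)^{-\beta/2}\big]\lesssim \sigma^{-1}|t-s|^{1-\beta},\qquad \sigma=\|Y(t)-Y(s)\|_{L^2}\ge c|t-s|^K,
\]
valid for $\beta>1$. This gives
\[
\iint \E[\cdots]\,\mu_N(ds)\,\mu_N(dt)\lesssim\iint|t-s|^{1-\beta-K}\,\mu_N(ds)\,\mu_N(dt),
\]
which is finite when $\beta-1+K<d_N$. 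Hence $\dimH\Graph(Y)\ge \min\{d_N+1-K,\ldots\}$ a.s., and letting $N\to\infty$ and $\beta\uparrow 2-K$ gives the lower bound a.s.
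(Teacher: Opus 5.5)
Your upper bound and your Frostman step are what the paper does; it defers both to the argument for the case $K<H$ in \cite[Theorem~2.6]{SchiedZhang2026}. So the theorem rests entirely on the lower bound $\E[\abs{Y(t)-Y(s)}^2]\ge c_N\abs{t-s}^{2K}$ on $T_N$ (Proposition~\ref{prop:lower-on-TN}). That bound is exactly the new content here, and you leave it unproved. Moreover, the mechanism you sketch for it fails at two points.
\begin{enumerate}
\item At the level $m$ with $b^{-m-1}<\abs{t-s}\le b^{-m}$, the scaled gap $b^m\abs{t-s}$ lies in $(1/b,1]$. Membership in $T_N$ does not prevent wrap-around at that level. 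No-wrap is only guaranteed at levels $m'$ with $b^{m'}\abs{t-s}\le b^{-N}$.
\item Conditioning on all other levels can annihilate the level-$m$ term. Take $b=2$, $N\ge3$, $s=1/3=0.\overline{01}_2$ and $t=0.(01)^k\overline{10}_2$. Both lie in $T_N$, and $t-s=2^{-2k}/3$, so $m=2k+1$. The level-$m$ increment is $B_H(1/3)-B_H(2/3)$, which is wrapped and has reversed orientation. It is identical to the level-$(m+2)$ increment, so its conditional variance given the other levels is zero. Local nondeterminism cannot help, since it requires the conditioning points to stay away from $\fp{b^ms}$ and $\fp{b^mt}$, and orbits in $T_N$ need not do so.
\end{enumerate}
A one-level projection argument therefore cannot work, and an unspecified Gram-matrix bound does not supply the missing idea.

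The paper's proof of that proposition is a compactness argument on deterministic integrands.
\begin{enumerate}
\item For $H>1/2$ one has $Y(t)-Y(s)=\int_0^1h_{s,t}\,\dd W_H$ with $h_{s,t}\in L^{1/H}[0,1]$. The embedding $L^{1/H}[0,1]\hookrightarrow\calH$ is continuous and injective (Lemma~\ref{lem:hilbert}).
\item Suppose the bound fails along $s_n,t_n\in T_N$, and set $r_n=t_n-s_n$. Rescale by $r_n^{-K}$ and reindex the levels relative to $M_n$, where $b^{-M_n-1}<r_n\le b^{-M_n}$. A diagonal argument gives limits of all orbit points.
\item This yields a limit integrand $G=\theta^{-K}\sum_{j\in\Z}\alpha^jA_j$ in $L^{1/H}$ with $\norm{G}_{\calH}=0$, hence $G=0$ a.e.
\item For $j\le-N$, no-wrap makes $A_j$ a nonnegative indicator $\1_{I_j}$, with $\abs{I_j}=b^j\theta$, up to a summable bridge constant. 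These coarse levels carry heights $\alpha^j\to\infty$ as $j\to-\infty$. Their $L^{1/H}$ norms $b^{(H-K)j}$ are nevertheless summable, and this is where $H>K$ enters.
\item Every remaining contribution is bounded below pointwise. Hence $G\ge\theta^{-K}(\alpha^{j_0}-C_N)>0$ on $I_{j_0}$ for $j_0$ sufficiently negative, a contradiction.
\end{enumerate}
The nondegeneracy thus comes from the pointwise positivity and unboundedness of infinitely many coarse levels acting together. It does not come from the innovation of any single level, which is why a one-level local-nondeterminism argument cannot reproduce it.
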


Combining Theorem~\ref{thm:hausdorff-high-hurst} with
\cite[Theorem~2.6]{SchiedZhang2026} yields the full formula:

\begin{corollary}[Hausdorff dimension of the graph of $Y$]\label{cor:full}
For all $H\in(0,1)$, $b\ge2$, and $\alpha\in(0,1)$,\begin{equation}
  \dimH\Graph(Y)=(2-H)\vee(2-K)
  \qquad\text{$\Pp$-a.s.}
  \label{eq:full-dimension-formula}
\end{equation}
\end{corollary}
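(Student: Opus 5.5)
The statement to prove is Corollary~\ref{cor:full}. The plan is a case split according to the relationship between $K$ and $2H-1$, invoking \cite[Theorem~2.6]{SchiedZhang2026} in one case and Theorem~\ref{thm:hausdorff-high-hurst} in the other.

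\emph{Case $K>2H-1$.} Here \cite[Theorem~2.6]{SchiedZhang2026}, as recalled before Theorem~\ref{thm:hausdorff-high-hurst}, gives $\dimH\Graph(Y)=(2-H)\vee(2-K)$ almost surely, and nothing more is needed.

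\emph{Case $K\le 2H-1$.} Since $K\ge0$ in general and in fact $K>0$ because $\alpha<1$, this forces $2H-1>0$, so $H>1/2$. Moreover, $H<1$ gives $2H-1<H$, hence $K\le 2H-1<H$. Both hypotheses of Theorem~\ref{thm:hausdorff-high-hurst} therefore hold, and that theorem yields $\dimH\Graph(Y)=2-K$ almost surely. Because $K<H$, we have $2-K>2-H$, so $2-K=(2-H)\vee(2-K)$, which is \eqref{eq:full-dimension-formula}.

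The two cases exhaust all $H\in(0,1)$, $b\ge2$ and $\alpha\in(0,1)$, so the corollary follows. There is essentially no obstacle. The only point requiring care is checking that the complementary region $K\le 2H-1$ is contained in the hypotheses $H>K$, $H>1/2$ of Theorem~\ref{thm:hausdorff-high-hurst}, which is the elementary inequality chain above. It also uses $K=\min\{1,-\log_b\alpha\}$, together with the fact that \cite[Theorem~2.6]{SchiedZhang2026} imposes no further restrictions beyond $K>2H-1$.
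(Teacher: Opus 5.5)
Your proof is correct and is essentially the paper's argument: both combine Theorem~\ref{thm:hausdorff-high-hurst} with \cite[Theorem~2.6]{SchiedZhang2026} by a case split. The only difference is the direction of the split. The paper splits on whether $H>K$ and $H>1/2$ and shows the complement satisfies $K>2H-1$, while you split on $K>2H-1$ and show the complement satisfies $H>K$ and $H>1/2$. Your explicit use of $K>0$ to get $H>1/2$ strictly, and of $K<H$ to identify $2-K$ with $(2-H)\vee(2-K)$, fills in details the paper leaves implicit.
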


\begin{proof}
If $H>K$ and $H>1/2$, \eqref{eq:full-dimension-formula} is Theorem~\ref{thm:hausdorff-high-hurst}.  If $H>K$ and
$H\le1/2$, then $2H-1\le0<K$, so $K>2H-1$; this case is covered by \cite[Theorem~2.6]{SchiedZhang2026}.  If
$H\le K$, then $K>2H-1$ as well, and the same cited theorem applies.  These
cases exhaust all possible relations between $H$ and $K$.
\end{proof}

\section{Proofs}\label{sec:proofs}

In the proofs of all of our main results, we need to deal with $b$-adic increments of $Y$, which involve the fractional part map $x\mapsto\{x\}$. To this end, we will use a trick from \cite{SchiedZZhang,SchiedZhang2024}, which represents fractional parts of $b$-adic rationals via external randomization. 
 Let $U_1,U_2,\ldots$ be
independent  random variables on a separate probability space $(\Omega_R,\mathcal F_R,\Pp_R)$ that have a uniform distribution on $\{0,\ldots,b-1\}$. Define
\begin{equation}
     R_m=\sum_{i=1}^m U_i b^{i-1},\qquad m\ge1.
     \label{eq:randomized-digits}
\end{equation}
Then $R_m$ has a uniform distribution on $\{0,1,\dots,b^m-1\}$, and so \eqref{eq:randomized-digits} can be seen as a randomization for $b$-adic digits. Its significance for dealing with fractional parts is owed to the fact that
\begin{equation*}
\{b^{-m}R_n\}=b^{-m}R_m\qquad\text{for $m\le n$.}
\end{equation*}
The expectation with respect to $\Pp_R$ will be denoted by $\E_R$.  

\subsection{Proof of Theorem~\ref{thm:op1}}\label{subsec:proof-op1}

We assume throughout this subsection that $H>K$ and put $p=1/K$.  Using the representation \eqref{eq:randomized-digits}, we define for a continuous
function $f:[0,1]\to\R$ with $f(0)=f(1)=0$,
\[
  \mathcal Z(f,R)
  :=
  \sum_{m=1}^\infty
  \alpha^{-m}
  \left(
  f\left(\frac{R_m+1}{b^m}\right)
  -f\left(\frac{R_m}{b^m}\right)
  \right),
\]
whenever the series is well defined; for $R_m=b^m-1$, the endpoint $(R_m+1)b^{-m}=1$ causes no ambiguity
because $f(1)=f(0)$. In the regime $H>K$, one may choose
$\gamma\in(K,H)$.  Then $\alpha b^\gamma=b^{\gamma-K}>1$, and the sample
paths of $B_H$ are $\gamma$-H\"older $\Pp$-almost surely.  Hence
\cite[Proposition~A.2]{SchiedZhang2024} applies pathwise to
$f(t)=B_H(t,\omega)$ and gives
\begin{equation}
  V=\E_R[\abs{\mathcal Z(B_H,R)}^p],
  \qquad p=1/K.
  \label{eq:V-branch-representation}
\end{equation}

\begin{lemma}\label{lem:op1-detbound}
Let $r\ge1$ and $\gamma>K$.  If $f(0)=f(1)=0$ and
\[
  M_\gamma(f):=
  \sup_{m\ge1}\sup_{0\le j<b^m}
  b^{\gamma m}\abs{f((j+1)b^{-m})-f(jb^{-m})}<\infty,
\]
then
\[
  \E_R[\abs{\mathcal Z(f,R)}^r]
  \le
  \left(\frac{M_\gamma(f)}{1-b^{K-\gamma}}\right)^r .
\]
\end{lemma}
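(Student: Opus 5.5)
The bound is deterministic and pathwise in $R$: we will bound $\abs{\mathcal Z(f,R)}$ uniformly in the realization of the randomized digits and then take the $r$-th moment trivially. For every $m\ge1$, $R_m$ takes values in $\{0,\dots,b^m-1\}$. Hence the $m$-th summand of $\mathcal Z(f,R)$ is of the form $f((j+1)b^{-m})-f(jb^{-m})$ with $0\le j<b^m$, where for $j=b^m-1$ the right endpoint is $1$, which is consistent with the definition of $M_\gamma(f)$. Therefore
\[
  \alpha^{-m}\abs{f((R_m+1)b^{-m})-f(R_mb^{-m})}\le \alpha^{-m}b^{-\gamma m}M_\gamma(f).
\]

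Next we identify $\alpha^{-1}$. Since $\gamma>K$ and the lemma is applied in the regime $H>K$ with $K<1$, we have $K=-\log_b\alpha$, that is, $\alpha^{-1}=b^{K}$. In general $K=\min\{1,-\log_b\alpha\}$, and the identity $\alpha=b^{-K}$ is exactly what is needed. I would note that in the present subsection $K<H<1$, so $K=-\log_b\alpha$ holds. Consequently, the $m$-th term is bounded by $M_\gamma(f)\,b^{(K-\gamma)m}$.

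Because $b^{K-\gamma}<1$, the series defining $\mathcal Z(f,R)$ converges absolutely for every realization of $R$, so $\mathcal Z(f,R)$ is well defined. Summing the geometric series gives
\[
  \abs{\mathcal Z(f,R)}\le M_\gamma(f)\sum_{m\ge1}b^{(K-\gamma)m}= M_\gamma(f)\frac{b^{K-\gamma}}{1-b^{K-\gamma}}\le \frac{M_\gamma(f)}{1-b^{K-\gamma}}.
\]
Raising to the power $r$ and taking $\E_R$ of a pointwise bound yields the claim; the assumption $r\ge1$ plays no role beyond ensuring finiteness.

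There is no real obstacle here. The only point requiring care is the identification $\alpha^{-1}=b^{K}$, which needs $K<1$. This holds in the standing regime $H>K$, but if $K=1$ one would instead have $\alpha^{-1}\le b$ and the same estimate still goes through with $\alpha^{-1}\le b^K$. So in all cases the single inequality $\alpha^{-1}\le b^{K}$, which follows from $K\le-\log_b\alpha$, is what the argument uses.
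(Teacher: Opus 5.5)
Your proposal is correct and follows the paper's proof exactly. You bound each summand pathwise by $M_\gamma(f)\,b^{(K-\gamma)m}$ using $\alpha^{-1}=b^{K}$ (valid because $K<H<1$ in this subsection), sum the geometric series, and take the $r$-th moment of the resulting deterministic bound.

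Your final paragraph has the inequality backwards, though. The relation $K\le-\log_b\alpha$ gives $\alpha^{-1}\ge b^{K}$, not $\alpha^{-1}\le b^{K}$. When $K=1<-\log_b\alpha$ and $1<\gamma<-\log_b\alpha$, the terms $\alpha^{-m}b^{-\gamma m}$ actually grow, so the estimate does not extend to that case. This does not affect the lemma, since the standing assumption $H>K$ rules it out.
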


\begin{proof}
Since $K<H$, we have $\alpha^{-m}=b^{Km}$. Hence, for every realization of
$R$,
\[
\begin{split}
\abs{\mathcal Z(f,R)}
&\le
\sum_{m=1}^\infty b^{Km}
\abs{f((R_m+1)b^{-m})-f(R_mb^{-m})}  \le
M_\gamma(f)\sum_{m=1}^\infty b^{(K-\gamma)m}
\le
\frac{M_\gamma(f)}{1-b^{K-\gamma}} .
\end{split}
\]
Taking the $r$-th power and the expectation over $R$ gives the result.
\end{proof}

The next estimate shows all moments of $M_\gamma(B_H)$ are finite.

\begin{lemma}\label{lem:op1-moment}
For $\gamma\in(K,H)$ and $r\in[1,\infty)$, we have
$
  \E[M_\gamma(B_H)^r]<\infty .
$
\end{lemma}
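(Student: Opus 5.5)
The plan is to bound $M_\gamma(B_H)$ by a Hölder-type seminorm of $B_H$ and then invoke a Garsia--Rodemich--Rumsey (or Kolmogorov--chaining) moment bound. For every $m\ge1$ and $0\le j<b^m$, set $s=jb^{-m}$ and $t=(j+1)b^{-m}$, so that $|t-s|=b^{-m}$. Then
\[
b^{\gamma m}\abs{B_H(t)-B_H(s)}\le \sup_{0\le u<v\le1}\frac{\abs{B_H(v)-B_H(u)}}{|v-u|^{\gamma}}=:\norm{B_H}_{\gamma}.
\]
Hence $M_\gamma(B_H)\le\norm{B_H}_\gamma$, and it suffices to show $\E[\norm{B_H}_\gamma^r]<\infty$ for all $r\ge1$ and all $\gamma<H$.

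First, write $B_H(u)=W_H(u)-\kappa(u)W_H(1)$. This gives
\[
\norm{B_H}_\gamma\le\norm{W_H}_\gamma+\norm{\kappa}_\gamma\abs{W_H(1)}.
\]
Since $\kappa$ is $\tau$-Hölder with $\tau>H>\gamma$ on $[0,1]$, it is also $\gamma$-Hölder, so $\norm{\kappa}_\gamma<\infty$. Moreover, $W_H(1)$ is Gaussian, so it has all moments. By Minkowski's inequality, it therefore remains to show $\E[\norm{W_H}_\gamma^r]<\infty$.

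For this, I would apply the Garsia--Rodemich--Rumsey inequality with $\Psi(x)=x^{q}$ and $p(u)=u^{\gamma+2/q}$, taking $q$ large enough that $2/q<H-\gamma$ and $q\ge r$. It gives
\[
\norm{W_H}_\gamma^q\le C\int_0^1\!\!\int_0^1\frac{\abs{W_H(v)-W_H(u)}^q}{|v-u|^{\gamma q+2}}\,\dd u\,\dd v .
\]
Taking expectations and using $\E\abs{W_H(v)-W_H(u)}^q=c_q|v-u|^{Hq}$, the right-hand side becomes $\int\!\!\int|v-u|^{(H-\gamma)q-2}\,\dd u\,\dd v$. This integral is finite because $(H-\gamma)q-2>-1$, which holds by the choice of $q$. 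Thus $\E[\norm{W_H}_\gamma^q]<\infty$, and Jensen's inequality yields the $r$-th moment. Alternatively, the same conclusion follows from Fernique's theorem: $\norm{W_H}_\gamma$ is an almost surely finite seminorm of a Gaussian process (finite by the Kolmogorov continuity theorem), hence it has Gaussian tails.

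There is no real obstacle here. The only points that need care are choosing $q$ large enough to exploit the strict inequality $\gamma<H$, and checking that the bridge correction $\kappa(u)W_H(1)$ does not spoil the Hölder bound. The latter is exactly where the assumption $\tau>H$ on $\kappa$ is used.
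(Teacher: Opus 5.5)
Your proof is correct, but it takes a different route from the paper's.

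\textbf{The paper's route.} The paper never passes to the full H\"older seminorm. It treats $M_\gamma(B_H)$ directly as a supremum over the countable family $G_{m,j}=b^{\gamma m}\big(B_H((j+1)b^{-m})-B_H(jb^{-m})\big)$. It bounds $\Var(G_{m,j})\le C_0b^{-2(H-\gamma)m}$, handling the $\kappa$-term via H\"older continuity. It then applies a plain union bound. At level $m$ there are only $b^m$ increments, and their Gaussian tails decay like $e^{-C_1x^2b^{2(H-\gamma)m}}$, which beats the exponential count. This gives $\Pp(M_\gamma(B_H)>x)\le C_2e^{-C_3x^2}$ directly, and all moments follow from the tail formula.

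\textbf{Your route.} You dominate $M_\gamma(B_H)$ by $\norm{B_H}_\gamma$ and split off the bridge term deterministically as $\norm{\kappa}_\gamma\abs{W_H(1)}$. You then invoke Garsia--Rodemich--Rumsey, which is a continuous chaining argument, for $\norm{W_H}_\gamma$. Your GRR computation is right: the exponent $(H-\gamma)q-2>0$ is even more than the $>-1$ that is needed.

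\textbf{Comparison.} The paper's argument is more elementary and self-contained. Because $M_\gamma$ only involves $b$-adic increments, no chaining is required, and Gaussian tails come out directly without citing Fernique or Borell--TIS. Your argument uses heavier machinery but proves something stronger: all moments of the full $\gamma$-H\"older seminorm of $B_H$ are finite, not just those of the $b$-adic quantity $M_\gamma$. Your Fernique remark recovers the Gaussian tails as well.

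\textbf{One small correction.} Your last sentence overstates the role of $\tau>H$. Your bridge step only uses that $\kappa$ is $\gamma$-H\"older, i.e.\ $\tau\ge\gamma$. Likewise, the paper's union bound would go through with decay exponent $(\tau\wedge H)-\gamma>0$.
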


\begin{proof}
For integers $m\ge1$ and $0\le j<b^m$, put
$
  G_{m,j}:=b^{\gamma m}
  \big(B_H((j+1)b^{-m})-B_H(jb^{-m})\big)
$.
Since $B_H(t)=W_H(t)-\kappa(t)W_H(1)$,
\[
\begin{split}
\Var(G_{m,j})
&\le
2b^{2\gamma m}\Var(W_H((j+1)b^{-m})-W_H(jb^{-m})) 
+2b^{2\gamma m}
  \abs{\kappa((j+1)b^{-m})-\kappa(jb^{-m})}^2\Var(W_H(1)).
\end{split}
\]
The first variance is $b^{-2Hm}$, and the H\"older continuity of
$\kappa$ gives
$\abs{\kappa((j+1)b^{-m})-\kappa(jb^{-m})}\le Cb^{-\tau m}$, with
$\tau>H$.  Therefore, there are constants $C_0,C_1>0$ such that
$     \Var(G_{m,j})\le C_0 b^{-2(H-\gamma)m}$ and $  \Pp(|G_{m,j}|>x)
  \le 2e^{-C_1x^2b^{2(H-\gamma)m}}
$
for all $x\ge1$, all $m\ge1$, and all $0\le j<b^m$.  Hence
\begin{equation*}
\begin{split}
\Pp(M_\gamma(B_H)>x)
&\le
\sum_{m=1}^\infty\sum_{j=0}^{b^m-1}\Pp(|G_{m,j}|>x)  \le
2\sum_{m=1}^\infty
b^m e^{-C_1x^2b^{2(H-\gamma)m}}
\le C_2e^{-C_3x^2},
\end{split}
\end{equation*}
for suitable constants $C_2,C_3>0$. 
The tail integral formula for moments now
gives $\E[M_\gamma(B_H)^r]<\infty$.
\end{proof}

\begin{proof}[Proof of Theorem \ref{thm:op1}]
Choose $\gamma\in(K,H)$.  By \eqref{eq:V-branch-representation}, we have
$
  V=\E_R[\abs{\mathcal Z(B_H,R)}^p] 
$.
For every $q\ge1$, Jensen's inequality and Lemma
\ref{lem:op1-detbound} with exponent $r=pq$ give
\[
  V^q = \big(\E_R[\abs{\mathcal Z(B_H,R)}^p]\big)^q
  \le     \E_R[\abs{\mathcal Z(B_H,R)}^{pq}]     \le     \left(\frac{M_\gamma(B_H)}{1-b^{K-\gamma}}\right)^{pq}.
\]
Taking expectations and applying Lemma \ref{lem:op1-moment} yields
$V\in L^q$ for every $1\le q<\infty$.

It remains to prove absolute continuity of the law of $V$.  Let $\mathcal U=\{0,\ldots,b-1\}^{\mathbb N}$, and let $\mu_U$ be the
product measure on $\mathcal U$ whose marginals are uniform on
$\{0,\ldots,b-1\}$.  For $u=(u_i)_{i\ge1}\in\mathcal U$, write
\[     r_m(u)=\sum_{i=1}^m u_i b^{i-1}
\]
Then the law of $(r_1,r_2,\dots)$ under $\mu_U$ is equal to the law of  $(R_1,R_2,\dots)$ under $\Pp_R$. 
Now we define
\[     \mathcal Z_u     :=     \sum_{m=1}^\infty     b^{Km}     \left(     B_H\left(\frac{r_m(u)+1}{b^m}\right)     -     B_H\left(\frac{r_m(u)}{b^m}\right)     \right).
\]
The series converges in $L^2(\Omega)$, uniformly in $u\in\mathcal U$ in
the following sense.  Since
$     \|B_H(t)-B_H(s)\|_{L^2(\Omega)}\le C|t-s|^H
$,
\[
\begin{split}     \|\mathcal Z_u\|_{L^2(\Omega)}     &\le     \sum_{m=1}^\infty     b^{Km}     \left\|     B_H\left(\frac{r_m(u)+1}{b^m}\right)     -     B_H\left(\frac{r_m(u)}{b^m}\right)     \right\|_{L^2(\Omega)}                          \le     C\sum_{m=1}^\infty b^{(K-H)m}     <\infty.
\end{split}
\]
Using the
canonical Gaussian Hilbert space $\mathcal H_H$ introduced in
the Appendix, there is an element $h_u\in\mathcal H_H$ such that
$\mathcal Z_u=W_H(h_u)$, and
$        \sup_{u\in\mathcal U}\|h_u\|_{\mathcal H_H}<\infty 
$.
Moreover, since $H>K$, we may apply \cite[Lemma~3.1]{SchiedZhang2024},
which gives
\begin{equation}     \|h_u\|_{\mathcal H_H}^2     =     \E\big[\abs{\mathcal Z_u}^2\big]     >0     \qquad\text{for every }u\in\mathcal U .
\label{eq:norm-hu-strictly-positive}
\end{equation}
For each fixed $h\in\mathcal H_H$, the map
$u\mapsto \langle h_u,h\rangle_{\mathcal H_H}$ is measurable, since it is
obtained as the limit of the corresponding finite partial sums.

We next choose one Gaussian direction along which a positive-measure set of
the functionals $h_u$ has nonzero projection.
By \eqref{eq:norm-hu-strictly-positive}, 
there exists a unit vector $e\in\mathcal H_H$ such that  $\mu_U(\{u:a_u\ne0\})>0$, where
$a_u:=\langle h_u,e\rangle_{\mathcal H_H}.$  Put
\[     e^\perp:=\{h\in\mathcal H_H:\langle h,e\rangle_{\mathcal H_H}=0\},     \qquad     \mathcal G:=\sigma\big(W_H(h):h\in e^\perp\big).
\]
Then $W_H(e)$ is standard normal and independent of $\mathcal G$.  Also
write
$h_u^\perp:=h_u-a_ue\in e^\perp$.
Thus
$     W_H(h_u)=W_H(h_u^\perp)+a_uW_H(e)
$
and so
\[     V=\int_{\mathcal U}|W_H(h_u)|^p\,\mu_U(\mathrm{d}u)=\int_{\mathcal U}|W_H(h_u^\perp)+a_uW_H(e)|^p\,\mu_U(\mathrm{d}u).
\]
Let $Q$ be a regular conditional distribution of $V$ given $\mathcal G$.
Since $W_H(h_u^\perp)$ is $\mathcal G$-measurable for each $u$ and
$W_H(e)$ is independent of $\mathcal G$, we have, for $\Pp$-a.e.~$\omega$ and all Borel sets $A\subset\mathbb R$,
\begin{equation}     Q(\omega,A)     =\Pp(V\in A\mid\mathcal G)(\omega)     =\Pp(g_\omega(W_H(e))\in A),     \label{eq:V-cond-law}
\end{equation}
where
\[     g_\omega(x):=     \int_{\mathcal U}|W_H(h_u^\perp)(\omega)+a_ux|^p\,\mu_U(\mathrm{d}u).
\]
We will show below that for a.e.~$\omega$, the law of $g_\omega(W_H(e))$ is absolutely continuous with respect to the Lebesgue measure. That is, $Q(\omega,A)=0$ for a.e.~$\omega$ if $A$ is a Lebesgue null set. 
Once this has been achieved, the absolute continuity of the law of $V$ follows by taking expectations with respect to $\mathbb P$ in \eqref{eq:V-cond-law}. 

We now analyze the regularity of $g$, where we suppress its dependence on  $\omega$ to simplify the
notation.  Since
$     \sup_{u\in\mathcal U}|a_u|     \le     \sup_{u\in\mathcal U}\|h_u\|_{\mathcal H_H}     <\infty
$
and
\[
\begin{split}     \E\left[\int_{\mathcal U}\abs{W_H(h_u^\perp)}^p\,\mu_U(\mathrm{d}u)\right]     &=     \int_{\mathcal U}\E\big[\abs{W_H(h_u^\perp)}^p\big]\,\mu_U(\mathrm{d}u)    \le     c_p\sup_{u\in\mathcal U}\|h_u\|_{\mathcal H_H}^p     <\infty,
\end{split}
\]
where $c_p$ is the $p$-th absolute moment of a standard normal random
variable, we have
\begin{equation}     \int_{\mathcal U}|W_H(h_u^\perp)|^p\,\mu_U(\mathrm{d}u)<\infty     \qquad\text{$\Pp$-a.s.}     \label{eq:perp-integrability}
\end{equation}
On this full-measure event, $g(x)<\infty$ for every $x\in\mathbb R$.
Since $p>1$, the function $\psi$ defined by $\psi(0):=0$ and
$     \psi(z):=\operatorname{sgn}(z)|z|^{p-1}
$
for $z\ne0$ is continuous on $\mathbb R$.  Moreover, for each fixed $u$,
the derivative of $x\mapsto |W_H(h_u^\perp)+a_ux|^p$ is equal to 
$     pa_u\psi(W_H(h_u^\perp)+a_ux)
$.
Let $L:=\sup_{u\in\mathcal U}|a_u|<\infty$.  For every $M<\infty$,
there exists a constant $C_M$ such that, for $|x|\le M$,
\[     |a_u\psi(W_H(h_u^\perp)+a_ux)|     \le L(|W_H(h_u^\perp)|+LM)^{p-1}     \le C_M(1+|W_H(h_u^\perp)|^{p-1}).
\]
By \eqref{eq:perp-integrability}, the right-hand side is $\Pp$-a.s.~$\mu_U$-integrable.  Hence, dominated convergence, applied first to the difference
quotients and then to the  derivative of the integrand, shows that $g$ is
continuously differentiable and that
\[     g'(x)     =     p\int_{\mathcal U}     a_u\,\operatorname{sgn}(W_H(h_u^\perp)+a_ux)     |W_H(h_u^\perp)+a_ux|^{p-1}\,\mu_U(\mathrm{d}u).
\]
Next, the function $g$ is strictly convex.  Indeed, for every $u$ with
$a_u\ne0$, the function
$        x\longmapsto |W_H(h_u^\perp)+a_ux|^p
$
is strictly convex, because $p>1$.  Since $\mu_U(\{u:a_u\ne0\})>0$,
integration over $u$ preserves strict convexity.

If
$g:\mathbb R\to\mathbb R$ is $C^1$ and strictly convex, then for every
Lebesgue-null Borel set $A\subset\mathbb R$, we have 
$     \lambda(g^{-1}(A))=0
$,
where $\lambda$ denotes Lebesgue measure; see \cite[Lemma 7.25]{Rudin1987RealComplex}. 
Applying this fact $\omega$-wise to the function $g_\omega$, and using that the law of $W_H(e)$
has a density, we obtain for every Lebesgue-null Borel set $A\subset\mathbb R$
that
$     \Pp(g_\omega(W_H(e))\in A)=0
$
for $\Pp$-a.e.~$\omega$.  This concludes the proof.
\end{proof}

\subsection{Proof of Theorem~\ref{thm:critical-power}}\label{subsec:proof-critical-power}

Throughout this section, we work in the
critical case
$     H=K$, or equivalently $ \alpha=b^{-H}$.
 For $0\le k\le b^n-1$, set
\[I_{n,k}:=[kb^{-n},(k+1)b^{-n}],\qquad     \Delta Y(I_{n,k}):     =Y((k+1)b^{-n})-Y(kb^{-n}),\qquad     \xi_{n,k}:=\alpha^{-n}\Delta Y(I_{n,k}) .
\]
Similarly, we define $\Delta W_H(I_{n,k})$,  $\Delta B_H(I_{n,k})$, and $\Delta\kappa(I_{n,k})$. 
Substituting the series defining $Y$ into $\Delta Y(I_{n,k})$ gives
\[
\Delta Y(I_{n,k})
=
\sum_{m=0}^{\infty}\alpha^m
\left(
B_H\!\left(\left\{b^m(k+1)b^{-n}\right\}\right)
-
B_H\!\left(\left\{b^m k b^{-n}\right\}\right)
\right).
\]
If $m\ge n$, then
$b^m(k+1)b^{-n}-b^mkb^{-n}=b^{m-n}\in\Z$, so the two fractional parts are
equal and the $m$-th summand has zero increment.  Thus only the terms
$m=0,\ldots,n-1$ remain.  We reindex these terms by setting $r=n-m$.
Then $r=1,\ldots,n$, and
$     b^mkb^{-n}={k}{b^{-r}}
$.
If $k_r:=k\bmod b^r$, then
$     \left\{{k}{b^{-r}}\right\}={k_r}{b^{-r}}
$. 
Similarly,
$     \left\{(k+1){b^{-r}}\right\}     =
       (k_r+1){b^{-r}},
$
except in the wrap-around case $k_r=b^r-1$, where the fractional part is
$0$.  In \eqref{eq:branch-representation} below we write the corresponding
right endpoint as $1$; this causes no change because $B_H(1)=B_H(0)=0$.
Finally, since $H=K$, we have 
$     \alpha^{-n}\alpha^m=\alpha^{-r}=b^{Hr}
$.
Therefore
\begin{equation}
\begin{split}     \xi_{n,k}     =     \sum_{r=1}^n b^{Hr}     \Big(B_H((k_r+1)b^{-r})-B_H(k_r b^{-r})\Big)= \sum_{r=1}^n b^{Hr}
       \Delta B_H(I_{r,k_r}),     \qquad     k_r:=k\bmod b^r .
\end{split}     \label{eq:branch-representation}
\end{equation}

\subsubsection{Expectation asymptotics}

 Let $U_1,U_2,\ldots$ and $R_1,R_2,\ldots$ be the random  variables on the auxiliary probability space $(\Omega_R,\mathcal F_R,\Pp_R)$ as defined at the beginning of Section \ref{sec:proofs}. 
Then $R_r$ is uniform on $\{0,\ldots,b^r-1\}$ and
$R_r=R_s\bmod b^r$ for $r\le s$.  The map
$     k\longmapsto (k\bmod b,\ldots,k\bmod b^n)
$
pushes the uniform measure on $\{0,\ldots,b^n-1\}$ to the law of
$(R_1,\ldots,R_n)$.  Hence
\begin{equation}     b^{-n}\sum_{k=0}^{b^n-1}\E[\abs{\xi_{n,k}}^p]     =     \overline{\E}\bigg[\bigg|\sum_{r=1}^n b^{Hr}\Delta B_H(I_{r,R_r})\bigg|^p\bigg],    \label{eq:branch-expectation}
\end{equation}
where $\overline{\E}$ is the expectation with respect to the product measure $\overline{\Pp}:=\Pp\otimes\Pp_R$ on $\Omega\times\Omega_R$.
 For the remainder of this section, we set
\[     p=\frac1H,\qquad     c_H=\E[\abs{Z}^{1/H}]     =\frac{2^{1/(2H)}\Gamma\big(\frac{H+1}{2H}\big)}{\sqrt\pi},
\]
where $Z$ is standard normal. In the sequel $C$ will denote a generic constant that may change in every step. The following lemma identifies the asymptotic expected contribution of the normalized critical increments.

\begin{lemma}\label{lem:expectation}
As $n\to\infty$,
\begin{equation}
   n^{-p/2}b^{-n}\sum_{k=0}^{b^n-1}\E[\abs{\xi_{n,k}}^p]     \longrightarrow c_H .\label{eq:expectation-lemma-first-assertion}
\end{equation}
Moreover, for every $b$-adic interval $J\subset[0,1]$,
\begin{equation}     n^{-p/2}b^{-n}     \sum_{\{0\le k<b^n:\,[kb^{-n},(k+1)b^{-n}]\subset J\}}     \E[\abs{\xi_{n,k}}^p]     \longrightarrow c_H |J|,\label{eq:expectation-lemma-second-assertion}
\end{equation}
and the same convergence holds with $J=[0,t]$ for every
$t\in[0,1]$.
\end{lemma}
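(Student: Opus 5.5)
Since $\sum_r b^{Hr}\Delta B_H(I_{r,R_r})$ is, conditionally on $R$, a centered Gaussian, we have
\[
\overline\E\Big[\Big|\sum_{r=1}^n b^{Hr}\Delta B_H(I_{r,R_r})\Big|^p\Big]=c_H\,\E_R\big[\sigma_n(R)^p\big],
\qquad
\sigma_n(R)^2:=\E\Big[\Big(\sum_{r=1}^n b^{Hr}\Delta B_H(I_{r,R_r})\Big)^2\Big].
\]
By \eqref{eq:branch-expectation}, the first assertion \eqref{eq:expectation-lemma-first-assertion} therefore reduces to showing $n^{-1}\sigma_n(R)^2\to1$ in a sense strong enough to pass to $\E_R[(\sigma_n^2/n)^{p/2}]$. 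Expanding the square gives diagonal and off-diagonal parts. For the diagonal, $b^{2Hr}\E[\Delta B_H(I_{r,j})^2]=1+O(b^{-(\tau-H)r})$, uniformly in $j$: this follows from $\E[\Delta W_H(I)^2]=|I|^{2H}$ together with the H\"older bound $|\Delta\kappa(I_{r,j})|\le Cb^{-\tau r}$ and Cauchy--Schwarz for the cross term. Summing over $r\le n$ gives $n+O(1)$ deterministically.

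The off-diagonal terms $2\sum_{r<s}b^{H(r+s)}\E[\Delta B_H(I_{r,R_r})\Delta B_H(I_{s,R_s})]$ are the main obstacle. I plan to control them in $L^2(\Pp_R)$, or at least in $L^1$, with a bound $o(n^2)$ for the $L^2$ norm of the sum. The $\kappa$-corrections are again summable and contribute $O(1)$. For the fBm part, note that $I_{s,R_s}\subset I_{r,R_r}$ whenever $r<s$, so the covariance is $\tfrac12$ times the standard combination of $|t-u|^{2H}$ evaluated at the nested endpoints.

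Conditionally on $R_r$, the position of $I_{s,R_s}$ inside $I_{r,R_r}$ is uniform over the $b^{s-r}$ subintervals. Averaging the covariance over this position gives exactly $b^{-(s-r)}\E[\Delta W_H(I_r)^2]=b^{-(s-r)}b^{-2Hr}$, because the subinterval increments sum to the whole increment. Hence
\[
\E_R\big[b^{H(r+s)}\Cov\big]=b^{(H-1)(s-r)},
\]
which is summable in $s-r$. So $\E_R[\sigma_n^2]=n+O(n)\cdot\text{const}$, and this is a problem: the mean of the off-diagonal part is of order $n$, not $o(n)$, which would spoil the constant.

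I therefore need to recheck the normalization. Stationarity of the $b$-adic increments of fBm at fixed scale might cancel this contribution, or the intended comparison might be with $(\E\sigma_n^2)^{p/2}$ up to a factor $1+o(1)$. In fact $\sum_{r<s}b^{(H-1)(s-r)}\approx n\,\frac{b^{H-1}}{1-b^{H-1}}$, and this is not $o(n)$. A cleaner formulation is to use the identity
\[
\sum_{s>r}\Delta W(I_{s,\cdot})\text{-type decompositions},
\]
that is, to rewrite $\xi$ via \eqref{eq:branch-representation} as a sum of nearly orthogonal pieces. The most robust route is the following: show that $\Var_R(\sigma_n^2)=o(n^2)$, and identify the limit of $\E_R\sigma_n^2/n$ from the scaling of $\Var(Y(t+h)-Y(t))$ averaged over $t$, as in \cite{SchiedZhang2024}. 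Whatever the exact limiting constant turns out to be, I expect the computation to yield $1$, since the claimed value is $c_H$. The concentration $\Var_R(\sigma_n^2)=o(n^2)$ should follow from the decay $|b^{H(r+s)}\Cov|\le Cb^{-(1-H)(s-r)}$ uniformly in the position, which itself follows from a Taylor bound on $|x|^{2H}$ away from the endpoints. Uniform integrability comes from $\sigma_n^2\le Cn$.

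For the second assertion \eqref{eq:expectation-lemma-second-assertion}, take $J=I_{m,j}$ and $n>m$. Then $k$ ranges over the indices with $k\bmod b^{m}$ fixed, in the reversed-digit sense of $k_r$. I plan to split $\xi_{n,k}$ into the terms with $r\le m$, of which there are only $m$ and which are bounded in $L^p$ by $Cm^{1/2}$, and the terms with $r>m$. The latter have the same randomized-digit structure, so the same argument gives the average $c_H\,n^{p/2}(1+o(1))$ over the $b^{n-m}$ admissible $k$. The $O(\sqrt m)$ perturbation is negligible relative to $\sqrt n$ by Minkowski's inequality. Finally, for $J=[0,t]$ I will approximate $[0,t]$ from inside and outside by finite unions of $b$-adic intervals, using the uniform bound $n^{-p/2}\E|\xi_{n,k}|^p\le C$.
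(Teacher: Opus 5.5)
Your conditional-Gaussian reduction to $c_H\,\E_R[\sigma_n(R)^p]$ and your diagonal computation are fine. The off-diagonal step, however, rests on a false geometric claim, and that step is the whole content of the lemma.

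\textbf{The off-diagonal terms.} It is not true that $I_{s,R_s}\subset I_{r,R_r}$ for $r<s$. Since $R_r=R_s\bmod b^r$ keeps the \emph{low-order} digits of $R_s$, the interval $I_{r,R_r}$ is the image of $I_{s,R_s}$ under $x\mapsto\{b^{s-r}x\}$; this is the Weierstrass mechanism behind \eqref{eq:branch-representation}. It is not the level-$r$ ancestor of $I_{s,R_s}$. Conditionally on $R_r$, the interval $I_{s,R_s}$ is spread over all of $[0,1]$ with spacing $b^{r-s}$. Your value $\E_R[b^{H(r+s)}\Cov]=b^{(H-1)(s-r)}$ and the resulting order-$n$ off-diagonal mean are artifacts of the nested picture.

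You noticed that this contradicts the constant $c_H$, but you left it unresolved (``I expect the computation to yield $1$''). So the proof that $\sigma_n^2(R)/n\to1$ in a sense strong enough to pass to $\E_R[\sigma_n^p]$ is missing. A position-uniform decay bound on $b^{H(r+s)}\Cov$ only yields $\sigma_n^2\le Cn$, not the constant.

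\textbf{The missing idea.} Let $i=R_s$ and translate both intervals by $-\{ib^{-r}\}$. Then $I_{r,R_r}$ becomes $[0,b^{-r}]$ and $I_{s,R_s}$ becomes $[j_ib^{-s},(j_i+1)b^{-s}]$, where $j_i:=i-b^s\{ib^{-r}\}\in(-b^s,b^s)$. The map $i\mapsto j_i$ is injective, because $j_i\equiv(1-b^{s-r})i\pmod{b^s}$ and $b^{s-r}-1$ is coprime to $b$.

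For $H\ge1/2$ all these covariances are nonnegative. Hence the $\Pp_R$-average is dominated by the telescoping sum over all $j\in\{-b^s,\dots,b^s-1\}$, which gives $\E_R[\Cov]\le b^{-s}\Cov(W_H(b^{-r}),W_H(1)-W_H(-1))\le Cb^{-r-s}$. It follows that $\E_R[\eta_n]\le C\sum_{r<s}b^{-(1-H)(r+s)}=O(1)$. Since $p/2\le1$ and $\eta_n\ge0$, this gives $\E_R[(n+\eta_n)^{p/2}]=n^{p/2}(1+o(1))$.

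For $H<1/2$ positivity fails. There you need the analogous bound in $L^{p/2}(\Pp_R)$, with $p/2>1$, which comes from the same injectivity plus the mean-value bound on increments of $|x|^{2H}$.

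\textbf{The $\kappa$ cross terms.} These are not $O(1)$ by H\"older continuity and Cauchy--Schwarz alone. With only those tools, the terms $b^{H(r+s)}\Delta\kappa(I_{r,R_r})\Cov(\Delta W_H(I_{s,R_s}),W_H(1))$ sum to $O(n)$ over $r<s\le n$. You need the two-scale bound \eqref{eq:a-bound}. Alternatively, as the paper does, pull out $D_nW_H(1)$ with $|D_n|\le C$ and use Minkowski's inequality in $L^p$.

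\textbf{The second assertion.} The same misreading of the digit structure breaks your treatment of \eqref{eq:expectation-lemma-second-assertion}. For $J=[ab^{-m},(a+1)b^{-m}]$ the admissible indices are $k=ab^{n-m}+l$ with $0\le l<b^{n-m}$. So the \emph{top} $m$ digits of $k$ are frozen, while $k\bmod b^m$ varies, and $k_r=l\bmod b^r$ for $r\le n-m$.

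The correct split is therefore at $r=n-m$. The first $n-m$ terms of \eqref{eq:branch-representation} are exactly $\xi_{n-m,l}$, to which the first assertion applies with $n-m$ in place of $n$. The last $m$ terms form a perturbation bounded in $L^p$ by $Cm$, which is negligible after normalization by $n^{p/2}$.

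Your split at $r=m$ puts precisely the $m$ terms whose digits are fixed by $a$ into the part you claim has ``the same randomized-digit structure,'' so that claim is false as stated. It could be repaired by discarding both the first and the last $m$ terms. The sandwich step for $J=[0,t]$ is fine.
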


\begin{proof}
First, we remove the bridge correction.  Put
\[     G_n=\sum_{r=1}^n b^{Hr}\Delta W_H(I_{r,R_r}),\qquad     D_n=\sum_{r=1}^n b^{Hr}\Delta\kappa(I_{r,R_r}).
\]
Then
\[     \sum_{r=1}^n b^{Hr}\Delta B_H(I_{r,R_r})=G_n-D_nW_H(1).
\]
Since $\kappa$ is $\tau$-H\"older continuous and $\tau>H$,
\begin{equation}     \abs{D_n}\le C_0\sum_{r=1}^n b^{Hr}b^{-\tau r}\le C.       \label{eq:bridge-correction-bound}
\end{equation}
Thus $\norm{D_nW_H(1)}_{L^p(\Pp)}\le C_1$. 

 It remains to estimate the contribution of $G_n$.
Conditional on $R=(R_r)$, $G_n$ is centered Gaussian.  Let
$     \sigma_n^2(R)=\E[G_n^2]
$.
The diagonal contribution to $\sigma_n^2(R)$ is 
\[\sum_{r=1}^nb^{2Hr}\E[(\Delta W_H(I_{r,R_r}))^2]=\sum_{r=1}^nb^{2Hr}b^{-2Hr}=n.
\]
  Writing the off-diagonal
contribution as
\[     \eta_n     =     2\sum_{1\le r<s\le n}     b^{H(r+s)}     \Cov\big(\Delta W_H(I_{r,R_r}),\Delta W_H(I_{s,R_s})\big),
\]
we have $\sigma_n^2(R)=n+\eta_n$.  We now prove the estimate on $\eta_n$
needed for $\E_R[\sigma^p_n(R)]=\E_R[(n+\eta_n)^{p/2}]$. For $H\ge1/2$ we use nonnegativity and
an $L^1$ bound, while for $H<1/2$ we use an $L^{p/2}$ bound.  In the
estimates below, whenever a translated increment is evaluated outside
$[0,1]$, we use a two-sided fractional Brownian motion $W_H$ on $(\Omega,\mathcal F,\Pp)$ with covariance
$     \frac12(|u|^{2H}+|v|^{2H}-|u-v|^{2H})
$. This does not change any finite-dimensional distribution on $[0,1]$, and we  automatically have $W_H(0)=0$ $\Pp$-a.s.

Let us now consider the case $H\ge1/2$, in which the increments of fractional Brownian motion are non-negatively correlated.
For $r<s$, write $i=R_s$.  Since $R_r=i\bmod b^r$, stationarity of
increments gives
\begin{align}
&\E_R[\Cov\big(\Delta W_H(I_{r,R_r}),\Delta W_H(I_{s,R_s})\big)]\nonumber\\
&\quad =
b^{-s}\sum_{i=0}^{b^s-1}
\Cov\big(W_H(b^{-r}),     W_H((i+1)b^{-s}-\{ib^{-r}\})-W_H(ib^{-s}-\{ib^{-r}\})\big).\label{eq:cov-DeltaW}
\end{align}
We claim that 
the map $i\mapsto j_i:=b^s\{ib^{-r}\}-i
$ is one-to-one.  Indeed, if $j_i=j_{i'}$, then
$b^s$ divides $(i-i')(b^{s-r}-1)$; since $b^{s-r}-1$ is coprime to
$b$, $b^s$ divides $i-i'$, and hence $i=i'$.  Also
$-b^s<j_i<b^s$.  Since the summands in \eqref{eq:cov-DeltaW} are non-negative for $H\ge1/2$, we
may enlarge the sum to all integers $-b^s,\ldots,b^s-1$, and obtain
\begin{equation}
\begin{split}
&\E_R[\Cov\big(\Delta W_H(I_{r,R_r}),\Delta W_H(I_{s,R_s})\big)]        \\
&\quad\le
 b^{-s}\sum_{j=-b^s}^{b^s-1}
 \Cov\big(W_H(b^{-r}),W_H((j+1)b^{-s})-W_H(jb^{-s})\big)                 \\
&\quad=
 b^{-s}\Cov\big(W_H(b^{-r}),W_H(1)-W_H(-1)\big)
 =\frac{ b^{-s}}{2}\big(|b^{-r}+1|^{2H}-|1-b^{-r}|^{2H}\big)\le Cb^{-r-s}.
\end{split}\label{eq:cov-summation-inequality}
\end{equation}
  Therefore, in the critical case,
\begin{equation}     \E_R[\eta_n]     \le C\sum_{1\le r<s\le n} b^{H(r+s)}b^{-r-s}     \le C.                                                     \label{eq:eta-mean-bound}
\end{equation}

If $H<1/2$, then $p/2>1$.  We claim that, for $r<s$,
\begin{equation}
 \left\|
 \Cov\big(\Delta W_H(I_{r,R_r}),\Delta W_H(I_{s,R_s})\big)
 \right\|_{L^{p/2}(\Pp_R)}
 \le C\big((1+s)^{2/p}b^{-4Hs}+b^{-s}\big).                       \label{eq:covariance-lp-claim}
\end{equation}
To this end, we first argue as in \eqref{eq:cov-summation-inequality} to obtain
\begin{equation}
\begin{split}
&\E_R[\abs{\Cov(\Delta W_H(I_{r,R_r}),\Delta W_H(I_{s,R_s}))}^{p/2}]   \\& \le
 b^{-s}\sum_{j=-b^s}^{b^s-1}
 \abs{\Cov(W_H(b^{-r}),W_H((j+1)b^{-s})-W_H(jb^{-s}))}^{p/2}.
\end{split}\label{eq:ER-cov-bound-smallH}
\end{equation}
The covariance inside the absolute value is bounded by
\[
 C\left(
 \abs{\abs{(j+1)b^{-s}}^{2H}-\abs{jb^{-s}}^{2H}}
 +
 \abs{\abs{(j+1)b^{-s}-b^{-r}}^{2H}-\abs{jb^{-s}-b^{-r}}^{2H}}
 \right).
\]
Since $p/2>1$, we may use the Jensen-type inequality  $(u+v)^{p/2}\le 2^{p/2-1}(u^{p/2}+v^{p/2})$ for
$u,v\ge0$.
Let $N=b^s$, $h=b^{-s}$, and
$\delta_m=((m+1)h)^{2H}-(mh)^{2H}$, $m\ge0$.  Since $x\mapsto |x|^{2H}$
is even and monotone on each of the two half-lines,
\[
 \sum_{j=-N}^{N-1}\abs{|(j+1)h|^{2H}-|jh|^{2H}}^{p/2}
 \le 2\sum_{m=0}^N \delta_m^{p/2}.
\]
For the second difference, put $M=b^{s-r}$.  Then
\[
\begin{split}
\sum_{j=-N}^{N-1}
\abs{|(j+1-M)h|^{2H}-|(j-M)h|^{2H}}^{p/2}  \le
2\sum_{m=0}^{2N}\delta_m^{p/2},
\end{split}
\]
because the shifted indices $j-M$ lie between $-2N$ and $N$.  Therefore
the right-hand side in \eqref{eq:ER-cov-bound-smallH}  is at most
\[
 Cb^{-s}\sum_{j=0}^{2b^s}
 \big((j+1)b^{-s})^{2H}-(jb^{-s})^{2H}\big)^{p/2}.
\]
The terms $j=0,1$ contribute $O(b^{-2s})$.  For $j\ge2$, the mean-value
theorem gives
\[     ((j+1)b^{-s})^{2H}-(jb^{-s})^{2H}     \le C j^{2H-1}b^{-2Hs}.
\]
Since $p=1/H$, the total contribution of these terms  becomes (recalling our convention that the constant $C$ may change in every step),
\[     Cb^{-s}\sum_{j=2}^{2b^s}j^{1-p/2}b^{-s}     \le Cb^{-2s}\int_1^{2b^s}x^{1-p/2} \dd x\le C\big((1+s)b^{-2s}+b^{-ps/2}\big),
\]
which implies \eqref{eq:covariance-lp-claim}.
Hence, by Minkowski's inequality,
\begin{equation}
\begin{split}     \norm{\eta_n}_{L^{p/2}(\Pp_R)}     &\le     C\sum_{1\le r<s\le n}     b^{H(r+s)}\big((1+s)^{2/p}b^{-4Hs}+b^{-s}\big)                    \\     &\le     C\sum_{s=1}^\infty     \big(b^{(2H-1)s}+(1+s)^{2/p}b^{-2Hs}\big)     \le C .
\end{split}
\label{eq:eta-lp-bound}
\end{equation}

Let $\gamma=p/2$.  If $H\ge1/2$, then $\gamma\le1$, $\eta_n\ge0$,
and \eqref{eq:eta-mean-bound} implies
\[     0\le     \E_R\left[\left(1+\frac{\eta_n}{n}\right)^\gamma-1\right]     \le \E_R\left[\left(\frac{\eta_n}{n}\right)^\gamma\right]     \le \left(\frac{\E_R[\eta_n]}{n}\right)^\gamma     \longrightarrow0 .
\]
If $H<1/2$, then $\gamma>1$, and $n+\eta_n=\sigma_n^2(R)\ge0$.  The elementary
inequality
$     \abs{x^\gamma-y^\gamma}     \le C_\gamma\big(y^{\gamma-1}\abs{x-y}+\abs{x-y}^\gamma\big)
$, 
applied with $x=n+\eta_n\ge0$ and $y=n\ge0$, together with
\eqref{eq:eta-lp-bound}, gives
\[     \E_R[\abs{(n+\eta_n)^\gamma-n^\gamma}]     \le C(n^{\gamma-1}\E_R[\abs{\eta_n}]+\E_R[\abs{\eta_n}^\gamma])     =O(n^{\gamma-1})=o(n^\gamma).
\]
Thus in both cases,
\begin{equation}     \E_R[(n+\eta_n)^\gamma] = n^\gamma(1+o(1)).                  \label{eq:variance-asymptotic}
\end{equation}
Since for given $R$,  we have  $ \E[G_n^2]=\sigma_n^2(R)=n+\eta_n$ and in turn $\E[\abs{G_n}^p]=c_H (n+\eta_n)^{p/2}$,
\eqref{eq:variance-asymptotic} gives
\begin{equation}
       \overline{\E}[\abs{G_n}^p]=c_H n^{p/2}(1+o(1)).                          \label{eq:gaussian-moment-asymptotic}
\end{equation}
Combining \eqref{eq:bridge-correction-bound} and
\eqref{eq:gaussian-moment-asymptotic} with Minkowski's inequality gives
\begin{equation}
 \overline{\E}[\abs{G_n-D_nW_H(1)}^p]=c_H n^{p/2}(1+o(1)).
 \label{eq:lem-expectation-first-assertion-aux-conv}
\end{equation}
Indeed, while $\norm{D_nW_H(1)}_{L^p(\overline{\Pp})}=O(1)$, \eqref{eq:gaussian-moment-asymptotic} says
$
\norm{G_n}_{L^p(\overline{\Pp})}=c_H^{1/p}n^{1/2}(1+o(1))$. Hence
\[
\norm{G_n-D_nW_H(1)}_{L^p(\overline{\Pp})}=c_H^{1/p}n^{1/2}+O(1),
\]
 and raising to the
$p$-th power yields \eqref{eq:lem-expectation-first-assertion-aux-conv}.
Together with \eqref{eq:branch-expectation}, this proves \eqref{eq:expectation-lemma-first-assertion}, our  first assertion.

To prove the second assertion, let $J=[ab^{-m},(a+1)b^{-m}]$.  For $n>m$, write
$k=ab^{n-m}+l$, $0\le l<b^{n-m}$.  Since
$(ab^{n-m}+l)\bmod b^r=l\bmod b^r$ for $r\le n-m$, the sum of the first $n-m$
terms in \eqref{eq:branch-representation} is exactly $\xi_{n-m,l}$.
Denote the sum of the remaining $m$ terms by $\Upsilon_{n,l}$.  Each summand in $\Upsilon_{n,l}$ is a normalized bridge increment
over a $b$-adic interval, and hence has $L^p$-norm bounded by a constant
depending only on $H,p,\kappa$; therefore,
$     \sup_{n,l}\norm{\Upsilon_{n,l}}_{L^p}\le Cm 
$.
Using
$
      \big|\abs{x+y}^p-\abs{x}^p\big|     \le C_p\big(\abs{x}^{p-1}\abs{y}+\abs{y}^p\big)
$,
H\"older's inequality, and Jensen's inequality for the summation over the index $\ell$,
\[
\begin{split}
&n^{-p/2}b^{-n}
 \sum_{\ell=0}^{b^{n-m}-1}
\big|\E[\abs{\xi_{n,ab^{n-m}+\ell}}^p]
       -\E[\abs{\xi_{n-m,\ell}}^p]\big|                                     \\
&\qquad\le
 C n^{-p/2}b^{-m}
 \left(
 m\left(b^{-(n-m)}\sum_{\ell=0}^{b^{n-m}-1}     \E[\abs{\xi_{n-m,\ell}}^p]\right)^{(p-1)/p}
 +m^p\right)                                                       \\
&\qquad\le
 C n^{-p/2}b^{-m}
 \left((n-m)^{(p-1)/2}m+m^p\right)\longrightarrow0,
\end{split}
\]
where the last inequality uses \eqref{eq:expectation-lemma-first-assertion} with
$n-m$ in place of $n$.  Hence
\[
\begin{split}
n^{-p/2}b^{-n}
\sum_{\ell=0}^{b^{n-m}-1}\E[\abs{\xi_{n-m,\ell}}^p]                          =
b^{-m}\left(\frac{n-m}{n}\right)^{p/2}
(n-m)^{-p/2}b^{-(n-m)}
\sum_{\ell=0}^{b^{n-m}-1}\E[\abs{\xi_{n-m,\ell}}^p]
\longrightarrow c_Hb^{-m}.
\end{split}
\]
Thus the limit of the left-hand side in \eqref{eq:expectation-lemma-second-assertion} as $n\uparrow\infty$ is $c_H b^{-m}=c_H|J|$.  For finite unions of
$b$-adic intervals, \eqref{eq:expectation-lemma-second-assertion} follows by additivity.  For general $t$, let
\[     E_n(A):=n^{-p/2}b^{-n}     \sum_{\{0\le k<b^n:\,[kb^{-n},(k+1)b^{-n}]\subset A\}}     \E[\abs{\xi_{n,k}}^p] .
\]
If $s< u$ are $b$-adic with $t\in(s,u)$, then
$     E_n([0,s])\le E_n([0,t])\le E_n([0,u])
$, and a sandwich argument gives
$E_n([0,t])\to c_Ht$.
\end{proof}

\subsubsection{An averaged Frobenius estimate}

The following averaged Frobenius estimate is the main covariance input for the concentration argument in Section~\ref{subsection:concentration}.

\begin{lemma}\label{lem:frobenius-covariance}
Let
\[     A^{(n)}_{ij}:=\Cov(\xi_{n,i},\xi_{n,j}),     \qquad 0\le i,j\le b^n-1.
\]
Then
\begin{equation}     b^{-2n}\sum_{i,j=0}^{b^n-1}\abs{A^{(n)}_{ij}}^2\le C     \qquad\text{for all }n.                                    \label{eq:frobenius-bound}
\end{equation}
Also,
\begin{equation}     \Var(\xi_{n,k})\le Cn     \qquad\text{for all }n\text{ and }0\le k\le b^n-1.          \label{eq:variance-bound}
\end{equation}
\end{lemma}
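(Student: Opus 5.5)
The plan is to expand both $\xi_{n,i}$ and $\xi_{n,j}$ via the branch representation \eqref{eq:branch-representation}. This gives
\[
A^{(n)}_{ij}=\sum_{r,s=1}^n b^{H(r+s)}\Cov\big(\Delta B_H(I_{r,i_r}),\Delta B_H(I_{s,j_s})\big),\qquad i_r=i\bmod b^r,\ j_s=j\bmod b^s.
\]
As in the proof of Lemma~\ref{lem:expectation}, the first step is to remove the bridge correction. Writing $\Delta B_H=\Delta W_H-\Delta\kappa\,W_H(1)$, the $\kappa$-terms contribute at most a bounded quantity. Indeed, $\sum_r b^{Hr}|\Delta\kappa(I_{r,\cdot})|\le C$ by $\tau>H$, and $\|\sum_r b^{Hr}\Delta W_H(I_{r,i_r})\|_{L^2}\le C\sqrt n$ (this is the variance bound, proved first). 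So the cross terms are $O(\sqrt n)$ uniformly, and their squares average to $O(n)$. That is too large, so instead I will handle them via the averaged structure. The cross term is $\Cov(\xi^W_{n,i},W_H(1))\cdot D_n(j)$, and $\Cov(\xi^W_{n,i},W_H(1))=\sum_r b^{Hr}\Cov(\Delta W_H(I_{r,i_r}),W_H(1))$. Each summand is $O(b^{Hr}\cdot b^{-r}\cdot\text{stuff})$ on average in $i$; for $H\ge1/2$ this is bounded via the same telescoping as in \eqref{eq:cov-summation-inequality}, giving $O(1)$ after averaging over $i$ in $L^2$.

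For the variance bound \eqref{eq:variance-bound} I would not average. For fixed $k$, $\Var(\xi_{n,k})\le 2\Var(\sum_r b^{Hr}\Delta W_H(I_{r,k_r}))+C$. The diagonal part equals $n$, and the off-diagonal covariances $b^{H(r+s)}\Cov(\Delta W_H(I_{r,k_r}),\Delta W_H(I_{s,k_s}))$ for $r<s$ are bounded pointwise by $b^{H(r+s)}\cdot C b^{-Hr}b^{-s}b^{(1-2H)(\cdot)}$-type estimates. Here I use $|\Cov(W(b^{-r}),\Delta W(I))|\le C b^{-s}\cdot\sup|\partial_x|x|^{2H}|$ when $I$ is away from the endpoints, and $\le C b^{-2Hs}$ when $I$ is adjacent. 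A cleaner route: $\Var(X)\le(\sum_r\|b^{Hr}\Delta W\|_{L^2})^2$ is only $n^2$, so instead I will bound $\sum_{r<s}$ by showing $\sum_{s>r}b^{H(r+s)}|\Cov|\le C$ for each fixed $r$. This follows since $I_{s,k_s}$ is a subinterval of length $b^{-s}$, and the covariance of increments over nested/nearby intervals is at most $C b^{-Hr}b^{-Hs}\min(1,b^{-(1-H)(s-r)}\cdot\ldots)$. If that fails, I will average as in Lemma~\ref{lem:expectation}; uniformity in $k$ needs the pointwise version, though.

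For the Frobenius bound, the key identity is $b^{-2n}\sum_{i,j}|A_{ij}|^2=\E_{R,R'}[\Cov(\xi(R),\xi(R'))^2]$, with $R,R'$ independent randomized digits. Writing $\Cov(\xi(R),\xi(R'))=\sum_{r,s}b^{H(r+s)}c_{r,s}(R_r,R'_s)$, Minkowski in $L^2(\Pp_R\otimes\Pp_R)$ gives
\[
\Big(b^{-2n}\sum|A_{ij}|^2\Big)^{1/2}\le\sum_{r,s}b^{H(r+s)}\|c_{r,s}\|_{L^2}.
\]
For independent $R_r,R'_s$ uniform, with $r\le s$, $\E|c_{r,s}|^2=b^{-r-s}\sum_{a,c}\Cov(\Delta W(I_{r,a}),\Delta W(I_{s,c}))^2$. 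Grouping by the relative offset and using stationarity, this is
\[
b^{-r}\sum_{|j|\le b^s}\Cov\big(W(b^{-r}),\Delta W(I_{s,j})\big)^2,
\]
which by the mean-value bounds (as in the $H<1/2$ computation) is $\lesssim b^{-r}\big(b^{-2Hr}b^{-2Hs}\cdot b^{-(s-r)\cdot\min(1,2-2H)}\cdot\text{log}\big)$. Thus $b^{H(r+s)}\|c_{r,s}\|_{L^2}\lesssim b^{-r/2}b^{-\epsilon(s-r)}$ up to logs, which is summable over $r,s$, giving a bound independent of $n$. I expect this decay estimate to be the main obstacle: one must check that the far-field covariance $\sim b^{-s}|x|^{2H-1}$ and the near-field (adjacent/overlapping, $\sim b^{-2Hs}$) pieces, with the extra factor $b^{-r}$ from independence of the digits, indeed beat $b^{H(r+s)}$. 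The independence of $R$ and $R'$ is exactly what supplies the needed $b^{-r/2}$ gain, which is absent in the diagonal-type estimate of Lemma~\ref{lem:expectation}.
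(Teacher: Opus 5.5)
Your overall route is the paper's: expand via \eqref{eq:branch-representation}, write the normalized Frobenius sum as an $L^2$-norm over independent uniform digits (the paper's independently chosen $b$-adic intervals $I_r,J_s$), apply Minkowski over $(r,s)$, treat the bridge correction separately, and sum a pointwise bound for the variance. The gap is in the estimate that carries the argument.

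\textbf{The counting step.} For $r\le s$, each offset $j=c-ab^{s-r}$ arises from at most $b^r$ pairs $(a,c)$. So the reduction reads $\E|c_{r,s}|^2\le b^{-s}\sum_{|j|\le b^s}\Cov(W_H(b^{-r}),\Delta W_H(I_{s,j}))^2$, not $b^{-r}\sum_j$.

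With your prefactor the bound does not close for $H>1/2$. The $b^{s-r}$ offsets with $I_{s,j}\subset[0,b^{-r}]$ each have covariance of order $b^{-s}b^{-(2H-1)r}$. This already gives $b^{2H(r+s)}b^{-r}\cdot b^{-s}b^{-(4H-1)r}=b^{-r}b^{(2H-1)(s-r)}$, which grows in $s-r$. Your claimed bound on $\sum_j$ is too small by a factor of order $b^{s-r}$ when $1/2<H<3/4$, so the two slips cancel there only by accident.

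Even with correct counting, the asserted gain $b^{-r/2}$ is false for $H>3/4$. The far offsets (distance $d\gg b^{-r}$, covariance of order $b^{-r-s}d^{2H-2}$) make $\E[(b^{H(r+s)}c_{r,s})^2]$ of order $b^{-2(1-H)(r+s)}$. This is still summable, but it has to be proved, and you flagged it yourself as unverified. The paper avoids this bookkeeping by splitting on whether the level-$r$ parent of $J_s$ equals or is adjacent to $I_r$. That event has probability at most $3b^{-r}$ and the bound $Cq^\theta$ holds there; otherwise the paper uses the separated bound $Cq^{1-H}h^{2H-2}$.

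\textbf{The missing covariance bound.} Your bridge and variance steps lack a covariance bound that does not depend on the relative position of the two intervals, namely \eqref{eq:two-scale-bound}: $|\Gamma(I,J)|\le C(\ell/L)^{\theta}$ with $\theta=H\wedge(1-H)$.
\begin{itemize}
\item With $I=[0,1]$ it gives $\sup_j b^{Hr}|\Cov(\Delta W_H(I_{r,j}),W_H(1))|\le Cb^{-\theta r}$. Together with your bound on the $\kappa$-terms, the whole bridge correction to $A^{(n)}_{ij}$ is then $O(1)$ pointwise for every $H$, and no averaging is needed. Your telescoping sketch covers only $H\ge1/2$ and yields an $L^1$, not an $L^2$, bound in $i$.
\item With $I=I_{r,k_r}$ and $J=I_{s,k_s}$ it gives $b^{H(r+s)}|\Cov(\Delta W_H(I_{r,k_r}),\Delta W_H(I_{s,k_s}))|\le Cb^{-\theta|r-s|}$ uniformly in $k$, hence $\Var(\xi_{n,k})\le Cn$.
\end{itemize}
Position-independence matters because these intervals are not nested: for $b=2$ and $k=1$, $I_{1,1}=[1/2,1]$ and $I_{2,1}=[1/4,1/2]$. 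Two further corrections to your sketch:
\begin{itemize}
\item For $H<1/2$ the decay exponent is only $H$ (from adjacent intervals), not $1-H$.
\item For $H>1/2$ your first-derivative bound is too weak when $I_{s,k_s}$ is far from $I_{r,k_r}$; summing it over $s>r$ gives order $b^{(2H-1)r}$. There one needs the second-difference estimate \eqref{eq:separated-bound}.
\end{itemize}
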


\begin{proof}
First, we consider only the fractional Brownian part so as to deal with the bridge correction $\kappa$ later.  For $r\le s$, let
$I_r$ and $J_s$ be independent uniformly chosen $b$-adic intervals of
levels $r$ and $s$.  Put $q=b^{-(s-r)}$, $N=b^r$, and
\[     \gamma_{r,s}=b^{H(r+s)}\Cov(\Delta W_H(I_r),\Delta W_H(J_s)).
\]
Let $P_s$ be the level-$r$ parent of $J_s$.  The event that $P_s$ is
equal to $I_r$ or adjacent to it has probability at most $3/N$.  On this
event, \eqref{eq:two-scale-bound} gives $\abs{\gamma_{r,s}}\le Cq^\theta$, where $\theta:=H\wedge(1-H)$.  On the complementary
event, if there are  $h\ge1$ level-$r$ intervals between $P_s$  and $I_r$,
then the distance between $J_s$ and $I_r$ is at least $hb^{-r}$, and
\eqref{eq:separated-bound} gives
$\abs{\gamma_{r,s}}\le Cq^{1-H}h^{2H-2}$.  Since the probability of each value
of $h$ is at most $2/N$,
\begin{equation}
 \E[\abs{\gamma_{r,s}}^2]
 \le
 C\left(
 b^{-r}b^{-2\theta(s-r)}
 +
 b^{-2(1-H)(s-r)}b^{-r}
 \sum_{h=2}^{b^r}h^{4H-4}
 \right).                                                        \label{eq:covariance-average-square}
\end{equation}
The square roots of the right-hand side are summable over $1\le r\le s$.
Indeed, the square root of the first term in
\eqref{eq:covariance-average-square} sums to
\[     \sum_{r=1}^\infty\sum_{\delta=0}^\infty     b^{-r/2}b^{-\theta\delta}<\infty,\qquad \delta=s-r.
\]
For the second term,
\begin{equation*}
 b^{-r}\sum_{h=2}^{b^r}h^{4H-4}
 \le C\begin{cases}
 b^{-r},&H<3/4,\\
 rb^{-r},&H=3/4,\\
 b^{-4(1-H)r},&H>3/4.
 \end{cases}                                                     
\end{equation*}
Hence, the square root of the second term in \eqref{eq:covariance-average-square} is bounded by
\[
C b^{-(1-H)(s-r)}
\begin{cases}
b^{-r/2},&H<3/4,\\
r^{1/2}b^{-r/2},&H=3/4,\\
b^{-2(1-H)r},&H>3/4.
\end{cases}
\]
This is summable over $r\ge1$ and $s-r\ge0$.  Therefore
\begin{equation}     \sum_{1\le r\le s<\infty}\big(\E[\abs{\gamma_{r,s}}^2]\big)^{1/2}<\infty.                                                                      \label{eq:covariance-summability}
\end{equation}

 We now include the bridge correction terms.  Let
$     d_{r,j}=b^{Hr}\Delta\kappa(I_{r,j})$ and $     a_{r,j}=b^{Hr}\Cov(\Delta W_H(I_{r,j}),W_H(1))
$.
The H\"older regularity of $\kappa$ gives
\begin{equation}     \sup_j\abs{d_{r,j}}\le Cb^{-(\tau-H)r}.                    \label{eq:d-bound}
\end{equation}
Moreover, the covariance estimate \eqref{eq:two-scale-bound} yields
\begin{equation}     \sup_j\abs{a_{r,j}}\le Cb^{-\theta r}.                     \label{eq:a-bound}
\end{equation}
For intervals $I_{r,j}$ and $I_{s,k}$, the difference between the normalized
covariance for $B_H$-increments and the corresponding normalized covariance for
$W_H$-increments is exactly
$     -a_{r,j}d_{s,k}-d_{r,j}a_{s,k}+d_{r,j}d_{s,k}
$.
It is therefore bounded by $C \nu_r \nu_s$, where
$     \nu_r=b^{-(\tau-H)r}+b^{-\theta r}
$
and $\sum_r \nu_r<\infty$.  Combining this with
\eqref{eq:covariance-summability}, and using symmetry for
the terms $r>s$, we obtain
\begin{equation*}
 \sum_{r,s=1}^\infty
 \left(\frac1{b^rb^s}\sum_{i=0}^{b^r-1}\sum_{j=0}^{b^s-1}
 \abs{
 b^{H(r+s)}
 \Cov(\Delta B_H(I_{r,i}),\Delta B_H(I_{s,j}))
 }^2
 \right)^{1/2}
 <\infty.         
\end{equation*}
Equivalently, for $n\ge r\vee s$, one can average
over $i,j=0,\dots, b^n-1$ while replacing $I_{r,i}$ and $I_{s,j}$ by $I_{r,i\bmod b^r}$ and $I_{s,j\bmod b^s}$, respectively. Thus, noting that
\[
\begin{split}
A^{(n)}_{ij}
&=\sum_{r,s=1}^n
b^{H(r+s)}
\Cov(\Delta B_H(I_{r,i\bmod b^r}),\Delta B_H(I_{s,j\bmod b^s})),
\end{split}
\]
 Minkowski's inequality in the finite Hilbert space
$\ell^2(\{0,\ldots,b^n-1\}^2)$, endowed with the normalized counting measure on $\{0,\ldots,b^n-1\}^2$,  gives
\[
\begin{split}
 \left(
 b^{-2n}\sum_{i,j=0}^{b^n-1}\abs{A^{(n)}_{ij}}^2
 \right)^{1/2}
 &\le
 \sum_{r,s=1}^n
 \left(
\frac1{b^rb^s}\sum_{i=0}^{b^r-1}\sum_{j=0}^{b^s-1}\abs{
 b^{H(r+s)}
 \Cov(\Delta B_H(I_{r,i\bmod b^r}),\Delta B_H(I_{s,j\bmod b^s}))
 }^2
 \right)^{1/2}                                                \le C.
\end{split}
\]
This proves \eqref{eq:frobenius-bound}.

For \eqref{eq:variance-bound}, we use that \eqref{eq:two-scale-bound},
\eqref{eq:d-bound}, and \eqref{eq:a-bound} imply the pointwise bound
\[
 \abs{
 b^{H(r+s)}
 \Cov(\Delta B_H(I_{r,i\bmod b^r}),\Delta B_H(I_{s,i\bmod b^s}))
 }
 \le Cb^{-\theta\abs{r-s}}+C\nu_r\nu_s .
\]
Summing this over $1\le r,s\le n$ gives
$     \Var(\xi_{n,i})     \le C\sum_{r,s=1}^n b^{-\theta\abs{r-s}}+C     \le Cn
$.
This proves \eqref{eq:variance-bound}.
\end{proof}

\subsubsection{Concentration of the critical power variation}\label{subsection:concentration}

The following elementary Hermite expansion estimate turns Gaussian covariance bounds into covariance bounds for powers of absolute values.

\begin{lemma}\label{lem:hermite}
For every $p>1$ there is $C_p<\infty$ such that, for all centered
jointly Gaussian $U,V$,
\begin{equation}
\abs{\Cov(\abs{U}^p,\abs{V}^p)}
\le C_p
\begin{cases}
\abs{\Cov(U,V)}^p,&\text{for $1<p\le2$,}\\
\Var(U)^{p/2-1}\Var(V)^{p/2-1}\Cov(U,V)^2,&\text{for $ p\ge2$.}
\end{cases}                                                       \label{eq:gaussian-power-cov}
\end{equation}
\end{lemma}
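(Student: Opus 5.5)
We reduce to standard normals and use the Hermite expansion of the even function $x\mapsto|x|^p$. If $\Var(U)=0$ or $\Var(V)=0$, both sides vanish, so we may assume $\sigma_U^2:=\Var(U)>0$ and $\sigma_V^2:=\Var(V)>0$. Put $X=U/\sigma_U$, $Y=V/\sigma_V$ and $\rho=\Cov(X,Y)\in[-1,1]$. Then
\[
\Cov(\abs{U}^p,\abs{V}^p)=\sigma_U^p\sigma_V^p\,\Cov(\abs{X}^p,\abs{Y}^p),
\]
so it suffices to show $\abs{\Cov(\abs X^p,\abs Y^p)}\le C_p\rho^2$ for all $p>1$, and additionally $\le C_p\abs\rho^p$ when $1<p\le2$. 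Indeed, for $p\ge2$ the first bound gives $\sigma_U^p\sigma_V^p\rho^2=\sigma_U^{p-2}\sigma_V^{p-2}\Cov(U,V)^2$, which is the second case of \eqref{eq:gaussian-power-cov}. For $1<p\le2$ the second bound gives $\sigma_U^p\sigma_V^p\abs\rho^p=\abs{\Cov(U,V)}^p$, the first case.

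For the universal $\rho^2$ bound, expand $\abs x^p=\sum_{k\ge0}c_k H_k(x)$ in $L^2(\gamma)$, where $H_k$ are the probabilists' Hermite polynomials and $\gamma$ is the standard Gaussian measure. This is possible because $\abs x^p\in L^2(\gamma)$. Since the function is even, $c_k=0$ for odd $k$. Mehler's formula, $\E[H_k(X)H_l(Y)]=\delta_{kl}k!\rho^k$, then gives
\[
\Cov(\abs X^p,\abs Y^p)=\sum_{k\ge1}c_{2k}^2(2k)!\,\rho^{2k}.
\]
This is a series of nonnegative terms, nondecreasing in $\rho^2$. It follows that
\[
0\le\Cov(\abs X^p,\abs Y^p)\le\rho^2\sum_{k\ge1}c_{2k}^2(2k)!\le\rho^2\,\Var(\abs X^p)=\rho^2(c_{2p}-c_p^2),
\]
where $c_q=\E\abs X^q$. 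This already settles $p\ge2$.

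For $1<p\le2$ we need the sharper $\abs\rho^p$ bound, but only for small $\abs\rho$: for $\abs\rho\ge1/2$ the bound $\rho^2\le\abs\rho^p$ fails while the trivial bound $\Cov\le \Var(\abs X^p)\le C\le C2^p\abs\rho^p$ suffices. For $\abs\rho<1/2$ we use $\rho^2\le\abs\rho^p$, since $p\le2$ and $\abs\rho\le1$. Hence the $\rho^2$ bound from the Hermite expansion applies for all $\rho$, and the covariance is nonnegative, so absolute values cause no trouble.

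I expect no serious obstacle. The only points needing care are the justification of Mehler's formula for the $L^2(\gamma)$ expansion (standard, by $L^2$ convergence of the partial sums together with Cauchy–Schwarz) and the degenerate-variance case.
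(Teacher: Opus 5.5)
Your proof is correct and follows the paper's route: reduce to unit variances, use the even symmetry (Hermite rank $2$) of $|x|^p$ to get a $C_p\rho^2$ bound, then use $\rho^2\le|\rho|^p$ for $p\le2$. The one difference is that you derive the $\rho^2$ bound yourself via Mehler's formula, which also yields nonnegativity and the explicit constant $\Var(|X|^p)$, whereas the paper cites Arcones's lemma. One small correction: $\rho^2\le|\rho|^p$ holds for all $|\rho|\le1$ when $p\le2$, so your remark that it fails for $|\rho|\ge1/2$ is wrong, though the extra case split you make there does no harm.
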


\begin{proof}
If either variance is zero, both sides of \eqref{eq:gaussian-power-cov} are
zero, so assume the variances are positive.
Assume first that $U,V$ have unit variance and correlation $\rho$.
The function $g(x)=\abs{x}^p-\E[\abs{Z}^p]$ has Hermite rank $2$, because it
is even and, for a standard normal random variable $Z$,
\[     \E\big[g(Z)(Z^2-1)\big]     =\E[\abs{Z}^{p+2}]-\E[\abs{Z}^p]     =p\,\E[\abs{Z}^p]\ne0.
\]
By \cite[Lemma 1]{Arcones1994}, for some $C_p>0$,
\begin{equation}     \abs{\Cov(\abs{U}^p,\abs{V}^p)}\le C_p\rho^2.     \label{eq:unit-variance-estimate}              
\end{equation}
If $1<p\le2$, then $\abs{\rho}^2\le\abs{\rho}^p$; if $p\ge2$, we keep
$\rho^2$.

Now let $U,V$ have standard deviations $\sigma_U,\sigma_V>0$, and set
$\widetilde U=U/\sigma_U$, $\widetilde V=V/\sigma_V$.  Their correlation
is
$     \rho=\Cov(U,V)/(\sigma_U\sigma_V)
$.
For $1<p\le2$, the unit-variance estimate \eqref{eq:unit-variance-estimate}    with $|\rho|^p$ gives
\[
\begin{aligned}
\abs{\Cov(\abs{U}^p,\abs{V}^p)}
&=\sigma_U^p\sigma_V^p
  \abs{\Cov(|\widetilde U|^p,|\widetilde V|^p)}   \le C_p\sigma_U^p\sigma_V^p|\rho|^p
 = C_p|\Cov(U,V)|^p .
\end{aligned}
\]
This is why no separate variance factors appear in the case $1<p\le2$.
For $p\ge2$, the unit-variance estimate with $\rho^2$ gives
\[
\begin{aligned}
\abs{\Cov(\abs{U}^p,\abs{V}^p)}
&\le C_p\sigma_U^p\sigma_V^p\rho^2                         =C_p\sigma_U^{p-2}\sigma_V^{p-2}\Cov(U,V)^2           =C_p\Var(U)^{p/2-1}\Var(V)^{p/2-1}\Cov(U,V)^2 .
\end{aligned}
\]
This proves \eqref{eq:gaussian-power-cov}.
\end{proof}

\begin{lemma}\label{lem:power-law}
Almost surely, for every $t\in[0,1]$,
\[      S_n(t):=n^{-p/2}b^{-n}     \sum_{k=0}^{\floor{tb^n}-1}\abs{\xi_{n,k}}^p\longrightarrow c_Ht .                              
\]
\end{lemma}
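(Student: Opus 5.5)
We prove Lemma~\ref{lem:power-law} by a second-moment method along the full sequence $n$, followed by a monotonicity/sandwich argument in $t$. Fix first a $b$-adic interval $J$. We will show that
\[
S_n(J):=n^{-p/2}b^{-n}\sum_{k:\,I_{n,k}\subset J}\abs{\xi_{n,k}}^p
\]
satisfies $S_n(J)-\E[S_n(J)]\to0$ almost surely. By Lemma~\ref{lem:expectation}, $\E[S_n(J)]\to c_H|J|$, so this gives $S_n(J)\to c_H|J|$ a.s. Since there are only countably many $b$-adic intervals, we obtain a common full-measure event. On this event, finite unions of $b$-adic intervals converge by additivity. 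For general $t$, we choose $b$-adic $s<t<u$ and use $S_n([0,s])\le S_n(t)\le S_n([0,u])$, up to boundary terms handled exactly as in the sandwich step of Lemma~\ref{lem:expectation}. Letting $s\uparrow t$ and $u\downarrow t$ then yields convergence simultaneously for all $t$.

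For the variance, we use $\Var(S_n(J))\le n^{-p}b^{-2n}\sum_{i,j}\abs{\Cov(\abs{\xi_{n,i}}^p,\abs{\xi_{n,j}}^p)}$, where the sum runs over all $0\le i,j<b^n$. Each $\xi_{n,k}$ is centered Gaussian, so Lemma~\ref{lem:hermite} applies to each pair.

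\emph{Case $p\ge2$, i.e.\ $H\le1/2$.} Combining \eqref{eq:variance-bound} with Lemma~\ref{lem:hermite}, each covariance is at most $C n^{p-2}(A^{(n)}_{ij})^2$. The Frobenius bound \eqref{eq:frobenius-bound} then gives
\[
\Var(S_n(J))\le C n^{-p}n^{p-2}b^{-2n}\sum_{i,j}(A^{(n)}_{ij})^2\le Cn^{-2}.
\]

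\emph{Case $1<p<2$.} Lemma~\ref{lem:hermite} bounds each covariance by $C\abs{A^{(n)}_{ij}}^p$. By Jensen's inequality with respect to the normalized counting measure, $b^{-2n}\sum_{i,j}\abs{A_{ij}}^p\le\big(b^{-2n}\sum_{i,j}A_{ij}^2\big)^{p/2}\le C$, so $\Var(S_n(J))\le Cn^{-p}$.

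In both cases the variance is summable in $n$, because $n^{-2}$ and $n^{-p}$ with $p>1$ are both summable. Chebyshev's inequality and Borel--Cantelli therefore give $S_n(J)-\E S_n(J)\to0$ a.s. along the full sequence, and no subsequence interpolation is needed.

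The main obstacle is the case $1<p<2$. There the decay is only $n^{-p}$, and the factor $n^{-p}$ is also hidden inside the estimate through the unit-variance reduction in Lemma~\ref{lem:hermite}. The bound $\abs{\Cov(U,V)}^p$ carries no variance normalization, so it really does yield $n^{-p}\cdot O(1)$. Summability is therefore tight: it relies on $p>1$ strictly, which holds since $H<1$. We must also check that the boundary terms in the sandwich step are negligible. These are at most $O(1)$ indices near $t$, each of expected size $O(n^{p/2})$, so after normalization they contribute $O(b^{-n})$ in $L^1$. Summed over $n$ this is finite, so by Borel--Cantelli they vanish a.s.
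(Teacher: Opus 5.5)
Your proof is correct and follows the paper's argument. It uses the same Chebyshev plus Borel--Cantelli second-moment bound via Lemma~\ref{lem:hermite} and the Frobenius estimate (split into $1<p\le 2$ and $p\ge 2$), followed by intersection over countably many $b$-adic points and a monotone sandwich. The only differences are cosmetic: you work with $b$-adic intervals $J$, and you add a boundary-term discussion that is not needed, since $\floor{sb^n}\le\floor{tb^n}\le\floor{ub^n}$ gives $S_n([0,s])\le S_n(t)\le S_n([0,u])$ exactly.
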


\begin{proof}
If $1<p\le2$, then by Jensen's inequality and Lemmas
\ref{lem:hermite} and \ref{lem:frobenius-covariance},
\begin{equation*}
\begin{split}     \Var(S_n(t))     &\le C n^{-p}b^{-2n}     \sum_{0\le i,j<\floor{tb^n}}|A^{(n)}_{ij}|^p    \le C n^{-p}     \left(b^{-2n}\sum_{i,j=0}^{b^n-1}|A^{(n)}_{ij}|^2     \right)^{p/2}     \le Cn^{-p}.
\end{split}
\end{equation*}
Since $p>1$, this is summable in $n$.  If $p\ge2$, then Lemmas
\ref{lem:hermite} and \ref{lem:frobenius-covariance} give
\begin{equation*}
\begin{split}     \Var(S_n(t))     &\le C n^{-p}b^{-2n}     \sum_{0\le i,j<\floor{tb^n}} n^{p-2}|A^{(n)}_{ij}|^2   \le Cn^{-2}.
\end{split}
\end{equation*}
This is again summable.  Chebyshev's inequality and the Borel--Cantelli lemma thus imply that 
$     S_n(t)-\E[S_n(t)]\to0
$
$\Pp$-a.s.~for each fixed $t$.

By Lemma \ref{lem:expectation}, $\E[S_n(t)]\to c_Ht$ for every $t$; in
particular, this holds at every $b$-adic $t$.  Intersecting the
corresponding probability-one events over the
countable set of $b$-adic $t$'s gives convergence at all $b$-adic
points.  Since $t\mapsto S_n(t)$ is nondecreasing and $t\mapsto c_Ht$ is
continuous, a standard sandwich argument yields almost-sure convergence for every $t\in[0,1]$.\end{proof}

\begin{proof}[Proof of Theorem \ref{thm:critical-power}]
Since $\Delta Y(I_{n,k})=\alpha^n\xi_{n,k}$, $p=1/H$, and
$\alpha^p=b^{-1}$, we have
\[
\frac1{n^{p/2}}
\sum_{k=0}^{\floor{tb^n}-1}\abs{\Delta Y(I_{n,k})}^p
=
n^{-p/2}b^{-n}
\sum_{k=0}^{\floor{tb^n}-1}\abs{\xi_{n,k}}^p
=S_n(t).
\]
Lemma \ref{lem:power-law} gives the asserted limit with
$c_H=\E[\abs{Z}^{1/H}]$.
\end{proof}

\subsection{Proof of Theorem~\ref{thm:critical-phi} and Corollary~\ref{cor:critical-variation}}\label{subsec:proof-critical-phi}

The next modulus estimate ensures that the logarithmic gauge can be expanded uniformly along the $b$-adic partition.

\begin{lemma}\label{lem:modulus}
For $\Pp$-a.e.~$\omega$,
\begin{equation*}     \max_{0\le k<b^n}\abs{\xi_{n,k}(\omega)}\le C(\omega)n     \qquad\text{for all sufficiently large }n.                 
\end{equation*}
\end{lemma}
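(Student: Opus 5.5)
The plan is a Gaussian union bound combined with Borel--Cantelli, using the variance estimate \eqref{eq:variance-bound} of Lemma~\ref{lem:frobenius-covariance}. For each $n$ and $0\le k<b^n$, the variable $\xi_{n,k}$ is centered Gaussian, being a finite linear combination of increments of $B_H$ by \eqref{eq:branch-representation}. By \eqref{eq:variance-bound}, $\Var(\xi_{n,k})\le C_0 n$ with $C_0$ independent of $n$ and $k$. The standard Gaussian tail bound therefore gives, for every $x>0$,
\[
\Pp\big(\abs{\xi_{n,k}}>x\big)\le 2\exp\Big(-\frac{x^2}{2C_0n}\Big).
\]

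Next we choose the threshold $x=An$ with a constant $A>0$ to be fixed. A union bound over the $b^n$ indices $k$ yields
\[
\Pp\Big(\max_{0\le k<b^n}\abs{\xi_{n,k}}>An\Big)\le 2b^n\exp\Big(-\frac{A^2n}{2C_0}\Big)
=2\exp\Big(n\Big(\log b-\frac{A^2}{2C_0}\Big)\Big).
\]
Taking $A$ large enough that $A^2/(2C_0)>\log b+1$ makes the right-hand side at most $2e^{-n}$, which is summable in $n$. By the Borel--Cantelli lemma, $\Pp$-a.s.\ we have $\max_k\abs{\xi_{n,k}}\le An$ for all sufficiently large $n$. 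This proves the claim with the deterministic constant $C(\omega)=A$. If the statement is intended for all $n$ rather than only large $n$, one can instead take $C(\omega)=\max\{A,\max_{n\le n_0(\omega)}\max_k\abs{\xi_{n,k}}/n\}$, which is finite.

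The threshold $n$ is generous. The variance bound alone would already give $\sqrt{n\log b^n}\asymp n$, so $n$ is in fact the natural order, and the argument has no real slack problem. The only point needing care is that the constant in \eqref{eq:variance-bound} is uniform in $k$, including the bridge correction terms. Lemma~\ref{lem:frobenius-covariance} already provides exactly this uniformity, via the bounds \eqref{eq:d-bound} and \eqref{eq:a-bound}. Hence there is no substantive obstacle, and the proof reduces to the union bound above.
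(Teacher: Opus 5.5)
Your proof is correct, but it takes a different route from the paper. The paper does not argue directly. It invokes the uniform modulus of continuity from \cite[Theorem~2.4(ii)]{SchiedZhang2026}, which in the critical case gives $|\Delta_{n,k}Y|\le C(\omega)b^{-Hn}n$ uniformly in $k$ for large $n$, and then simply rescales by $\alpha^{-n}=b^{Hn}$. You instead derive the bound inside the paper from the uniform variance estimate \eqref{eq:variance-bound} of Lemma~\ref{lem:frobenius-covariance}, combined with a Gaussian tail bound, a union bound over the $b^n$ increments, and Borel--Cantelli.

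Your route keeps the argument self-contained. It also makes the order $n$ transparent as $\sqrt{Cn\cdot\log b^n}\asymp n$, and it gives a deterministic constant for all $n\ge n_0(\omega)$. The sharper estimate $\Var(\xi_{n,k})\le Cn$ is what makes this work. The trivial triangle-inequality bound $\|\xi_{n,k}\|_{L^2}\le Cn$ would only give $n^{3/2}$ through the same union bound. You correctly identify the one real point to check, namely that \eqref{eq:variance-bound} is uniform in $k$ including the bridge corrections.

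The paper's citation buys a stronger statement, a modulus of continuity for all increments of $Y$ and not only $b$-adic ones, at the cost of depending on an external theorem. For the use in Theorem~\ref{thm:critical-phi}, only the $b$-adic version you prove is needed.
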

\begin{proof}
By \cite[Theorem~2.4(ii)]{SchiedZhang2026}, we have $\Pp$-a.s.,
$     |\Delta_{n,k}Y|\le C(\omega)b^{-Hn}n
$
uniformly in $k$, for all large $n$.  Since $H=K$,
$\alpha=b^{-H}$, and $\Delta_{n,k}Y=\alpha^n\xi_{n,k}$, the claim follows.
\end{proof}

\begin{proof}[Proof of Theorem \ref{thm:critical-phi}]
By \eqref{eq:branch-representation}, we have 
$     \Delta Y(I_{n,k})=\alpha^n\xi_{n,k}=b^{-Hn}\xi_{n,k}
$.
Put $L_\alpha=-\log\alpha=H\log b$.
For all sufficiently large $n$, Lemma \ref{lem:modulus} implies
$\abs{\Delta Y(I_{n,k})}<e^{-1}$ uniformly in $k$, and
\begin{equation}
\begin{split}
\Phi(\abs{\Delta Y(I_{n,k})})
&=
b^{-n}\abs{\xi_{n,k}}^p
\big(nL_\alpha-\log\abs{\xi_{n,k}}\big)^{-p/2},
\end{split}                                                        \label{eq:phi-exact}
\end{equation}
with the convention that the term is $0$ for $\xi_{n,k}=0$.
We also interpret $x^p(1+|\log x|)$ as $0$ at $x=0$.

By Lemma \ref{lem:modulus}, for all large $n$ and all
$k$ with $\xi_{n,k}\ne0$, $\log\abs{\xi_{n,k}}\le nL_\alpha/2$.  Hence the
mean-value theorem gives, for $x>0$ with $\log x\le nL_\alpha/2$,
\begin{equation}     \abs{(nL_\alpha-\log x)^{-p/2}-(nL_\alpha)^{-p/2}}     \le C n^{-p/2-1}\abs{\log x}.      \label{eq:mean-value-estimate-in-proof-critical-phi-thm}                      
\end{equation}
Also $x^p\abs{\log x}\le C_p$ on $0<x\le1$, while on
$1\le x\le Cn^{3/2}$ we have $\abs{\log x}\le C'\log n$.
In the sequel,  $O_\omega(\cdot)$ denotes a bound with a finite random constant.
Since Lemma \ref{lem:power-law} with $t=1$ gives
\[     b^{-n}\sum_{k=0}^{b^n-1}\abs{\xi_{n,k}}^p     =O_\omega(n^{p/2}),
\]
we obtain
\begin{equation}     b^{-n}\sum_{k=0}^{b^n-1}     \abs{\xi_{n,k}}^p(1+\abs{\log\abs{\xi_{n,k}}})     =O_\omega(n^{p/2}\log n).                                  \label{eq:log-moment-bound}
\end{equation}
Combining \eqref{eq:phi-exact} with the mean-value estimate \eqref{eq:mean-value-estimate-in-proof-critical-phi-thm}, for all large
$n$ and every $k$,
\begin{equation*}
\begin{split}
\left|
\Phi(\abs{\Delta Y(I_{n,k})})
-
L_\alpha^{-p/2} n^{-p/2}b^{-n}\abs{\xi_{n,k}}^p
\right|                                                        \le
C n^{-p/2-1}b^{-n}
\abs{\xi_{n,k}}^p(1+\abs{\log\abs{\xi_{n,k}}}).
\end{split}
\end{equation*}
Summing this pointwise bound over $0\le k<\floor{tb^n}$ and then bounding
the partial sum by the full sum gives
\begin{equation*}
\begin{split}
&\sup_{0\le t\le1}\Bigg|
\sum_{k=0}^{\floor{tb^n}-1}\Phi(\abs{\Delta Y(I_{n,k})})
-
L_\alpha^{-p/2}S_n(t)
\Bigg|              \le
C n^{-p/2-1}
b^{-n}\sum_{k=0}^{b^n-1}
\abs{\xi_{n,k}}^p(1+\abs{\log\abs{\xi_{n,k}}})
\le C(\omega)n^{-1}\log n\longrightarrow0 ,
\end{split}                                                        
\end{equation*}
where the last inequality is \eqref{eq:log-moment-bound}.
By Lemma \ref{lem:power-law}, we have
$     L_\alpha^{-p/2}S_n(t)\longrightarrow L_\alpha^{-p/2}c_Ht
$
$\Pp$-a.s.~for every $t\in[0,1]$.  Since $L_\alpha=-\log\alpha$, this is
exactly the claimed constant.

Finally, if one uses the convention $k=0,\ldots,\floor{tb^n}$, one adds at
most one term.  If $t<1$, this term is covered directly by Lemma
\ref{lem:modulus}; if $t=1$, the periodic representation of $Y$ makes the
increment from $1$ to $1+b^{-n}$ identical to the increment from $0$ to
$b^{-n}$.  Thus the added term is bounded by
\[     C(\omega) b^{-n} n^{3p/2}(nL_\alpha)^{-p/2}     =C(\omega)b^{-n}n^p\longrightarrow0,
\]
so the convention does not change the limit.
\end{proof}

\begin{proof}[Proof of Corollary~\ref{cor:critical-variation}]
The case $q= 1/H$ is trivial by Theorem \ref{thm:critical-power}. We take $q>0$ with $q\neq p:=1/H$ and let 
$\Psi(x):={x^q}/{\Phi(x)}$, which for $0<x<1/e$ is equal to
$x^{q-p}(-\log x)^{p/2}$. If $q>p$, then
\begin{align*}
V_n^{(q)}(Y,1)&\le \bigg(\max_{k=0,\dots,b^n-1} \Psi\!\left(\abs{Y((k+1)b^{-n})-Y(kb^{-n})}\right)\bigg)\sum_{k=0}^{b^n-1}
 \Phi\!\left(\abs{Y((k+1)b^{-n})-Y(kb^{-n})}\right).
\end{align*}
$\Pp$-a.s., the sum on the right converges to a finite limit by  Theorem \ref{thm:critical-phi}, while the maximum tends to zero, due to the uniform continuity of the sample paths of $Y$ and the fact that $\lim_{x\downarrow 0}\Psi(x)=0$. This gives $V_n^{(q)}(Y,1)\to0$, $\Pp$-a.s.

If $q<p$, then  for every
$L>0$, there exists $\varepsilon\in(0,e^{-1})$ such that
$x^q\ge L\Phi(x)$ for $ 0\le x\le\varepsilon
$. 
  By the
uniform continuity of the sample paths,  the absolute values of all $b$-adic increments 
are bounded by $\varepsilon$ for all sufficiently
large $n$.  Therefore, for all sufficiently large $n$,
\[     V_n^{(q)}(Y,1)     =     \sum_{k=0}^{b^n-1}\abs{Y((k+1)b^{-n})-Y(kb^{-n})}^{q}     \ge     L\sum_{k=0}^{b^n-1}\Phi(\abs{Y((k+1)b^{-n})-Y(kb^{-n})}).
\]
By Theorem~\ref{thm:critical-phi}, the sum on the right converges almost
surely to a strictly positive constant. Since $L>0$ is arbitrary, it follows that $V_n^{(q)}(Y,1)\to\infty$, $\Pp$-a.s. The case $t\in(0,1)$ is similar.\end{proof}

\subsection{Proof of Theorem~\ref{thm:hausdorff-high-hurst}}\label{subsec:proof-hausdorff}

Throughout this section, we assume $H>K$ and $H>1/2$.  The proof of Theorem~\ref{thm:hausdorff-high-hurst} is based on a
lower $L^2$ increment estimate on sets whose $b$-adic orbits avoid the
endpoints of the unit interval, followed by a Frostman energy argument.

Recall that $T_N$ is the set of points $x\in[0,1]$ such that
$\{b^k x\}\in[b^{-N},1-b^{-N}]$ for every $k\ge0$.  The following lemma builds
compact subsets of such sets with Hausdorff dimension close to one.
These Cantor-type sets have dimension arbitrarily close to one and stay uniformly away from the discontinuities of the fractional-part map along all forward $b$-adic iterates. Here and in the sequel, we let  $\N=\{1,2,\ldots\}$.

\begin{lemma}\label{lem:SN}
Let $L\in\N$ be such that $\beta=b^L\ge3$.  Put
$     D_L=\{1,2,\ldots,\beta-2\}
$,
and
\[     S_L=\left\{\sum_{i=1}^{\infty}\xi_i \beta^{-i}:     \xi_i\in D_L\text{ for every }i\ge1\right\}.
\]
Then
\begin{equation}     S_L\subset T_{2L},     \label{eq:SL-subset-T2L}
\end{equation}
and
\begin{equation}     \dimH S_L=\frac{\log(\beta-2)}{\log \beta}.     \label{eq:SL-dimension}
\end{equation}
Moreover, for every
\begin{equation}     0<\eta<\frac{\log(\beta-2)}{\log \beta},     \label{eq:eta-SL-condition}
\end{equation}
there is an atomless Borel probability measure $\mu_L$ supported on $S_L$
such that
\begin{equation}     \iint |t-s|^{-\eta}\,\mu_L(\dd t)\mu_L(\dd s)<\infty.     \label{eq:muL-energy}
\end{equation}
\end{lemma}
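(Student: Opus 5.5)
The plan is to treat the three assertions separately, each by a standard self-similar-set argument in base $\beta=b^L$.

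\emph{Inclusion \eqref{eq:SL-subset-T2L}.} Fix $x=\sum_i\xi_i\beta^{-i}\in S_L$ and $k\ge0$. Write $k=jL+\ell$ with $0\le\ell<L$. Then $\{b^kx\}=\{b^\ell y\}$, where $y:=\{\beta^jx\}=\sum_{i\ge1}\xi_{j+i}\beta^{-i}$, since the shift of base-$\beta$ digits is exact. Because $1\le\xi_i\le\beta-2$, we get
\[
\beta^{-1}+\beta^{-1}\sum_{i\ge2}\beta^{-(i-1)}\cdot 0\le y\le (\beta-2)\beta^{-1}+\sum_{i\ge2}(\beta-2)\beta^{-i}<1-\beta^{-1}.
\]
More precisely, $y\in[\beta^{-1},1-\beta^{-1}]$, since $\sum_{i\ge2}(\beta-2)\beta^{-i}<\beta^{-1}$. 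It remains to handle the extra factor $b^\ell$ with $\ell<L$. Write $b^\ell y$ in base $b$: its digits are those of $y$ shifted by $\ell$ places. Two facts follow. First, the integer part of $b^\ell y$ is discarded. Second, the remaining fractional part begins with the last $L-\ell$ base-$b$ digits of $\xi_{j+1}$, followed by the block $\xi_{j+2}$, and so on. Since $\xi_{j+2}\in[1,\beta-2]$, the fractional part lies between $b^{-(L-\ell)}\cdot\beta^{-1}\ge b^{-2L}$ and $1-b^{-(L-\ell)}\beta^{-1}\le 1-b^{-2L}$. Concretely, $\{b^\ell y\}\ge b^{-(L-\ell)}\xi_{j+2}\beta^{-1}\ge b^{-2L}$, and symmetrically $1-\{b^\ell y\}\ge b^{-(L-\ell)}(\beta-1-\xi_{j+2})\beta^{-1}-(\text{tail})\ge b^{-2L}$. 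The tail is handled because the digits after $\xi_{j+2}$ are at most $\beta-2$, which leaves a strictly positive gap. This gives $\{b^kx\}\in[b^{-2L},1-b^{-2L}]$, that is, $x\in T_{2L}$. I expect this endpoint bookkeeping, including the strictness in the tail estimates, to be the only slightly delicate step. I would write it carefully via the bound $\sum_{i\ge1}(\beta-2)\beta^{-i}=(\beta-2)/(\beta-1)$.

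\emph{Dimension \eqref{eq:SL-dimension}.} $S_L$ is the attractor of the IFS $\{x\mapsto(x+d)/\beta:d\in D_L\}$, which consists of $\beta-2$ similitudes with ratio $1/\beta$. The open set condition holds with $(0,1)$, since the images $(d/\beta,(d+1)/\beta)$ are disjoint. Moran's theorem therefore gives $\dimH S_L=\log(\beta-2)/\log\beta$. Alternatively, the upper bound follows from the obvious covers by $(\beta-2)^n$ intervals of length $\beta^{-n}$, and the lower bound follows from the mass distribution principle applied to the measure below.

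\emph{Energy \eqref{eq:muL-energy}.} Let $\mu_L$ be the law of $\sum_i\xi_i\beta^{-i}$ with i.i.d.\ digits $\xi_i$ uniform on $D_L$. This measure is supported on the compact set $S_L$ and is atomless, since every level-$n$ cylinder has mass $(\beta-2)^{-n}\to0$ and distinct digit sequences give distinct points, the digits $0$ and $\beta-1$ being excluded. Cylinders of level $n$ are separated by at least $\beta^{-(n+1)}$, because adjacent allowed cylinders differ in the $(n+1)$st digit and the admissible digits never reach $0$ or $\beta-1$, which leaves gaps. It follows that $|t-s|\ge\beta^{-(n+1)}$ whenever $t,s$ first differ at digit $n+1$. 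The $\mu_L\otimes\mu_L$-probability of agreeing in exactly the first $n$ digits is at most $(\beta-2)^{-n}$. Hence
\[
\iint|t-s|^{-\eta}\,\mu_L(\dd t)\mu_L(\dd s)\le\sum_{n\ge0}(\beta-2)^{-n}\beta^{\eta(n+1)},
\]
and this series converges exactly under \eqref{eq:eta-SL-condition}. The diagonal $\{t=s\}$ is null because $\mu_L$ is atomless.
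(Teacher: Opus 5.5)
Your proposal is correct. The dimension part is the paper's argument (Moran's theorem with open set $(0,1)$). Your inclusion argument is the paper's block observation written out numerically: after the shift, the first $2L$ base-$b$ digits of $\{b^kx\}$ contain the complete block $\xi_{j+2}$, which is neither all $0$ nor all $b-1$. The paper reads both bounds off this fact directly, with no tail sums. In your displayed chain for $1-\{b^\ell y\}$, the coefficient should be $\beta-\xi_{j+2}$, not $\beta-1-\xi_{j+2}$; as written, with the tail subtracted literally, the second inequality can fail. The clean version is the one you announce: with $y'=\{\beta^{j+1}x\}\le(\beta-2)/(\beta-1)$, you get $1-\{b^\ell y\}\ge b^{-(L-\ell)}(1-y')\ge b^{-(L-\ell)}/(\beta-1)\ge b^{-2L}$.

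The energy bound is where you take a genuinely different route. The paper first proves the Frostman estimate $\mu_L(I)\le3\beta^{d_L}|I|^{d_L}$ for every interval $I$: an interval of length at most $\beta^{-m}$ meets at most three level-$m$ $\beta$-adic intervals, each of mass at most $(\beta-2)^{-m}$. It deduces atomlessness from this estimate and then integrates $\mathbb P(|X_1-X_2|<r)\le C_Lr^{d_L}$ against $\eta r^{-\eta-1}\,\mathrm{d}r$. You instead split $\mu_L\otimes\mu_L$ by the length of the common digit prefix and use the gaps between sibling cylinders. This turns the energy into a single geometric series with the exact threshold $\beta^\eta<\beta-2$. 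Your route is shorter but relies on the strong separation of $S_L$. The paper's route needs only a counting property of $\beta$-adic intervals and produces a reusable Frostman bound.

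One constant in your version needs fixing, harmlessly. The level-$(n+1)$ cylinder with prefix value $a$ lies in $\beta^{-(n+1)}[a+\frac1{\beta-1},\,a+\frac{\beta-2}{\beta-1}]$. So points whose digits first differ at position $n+1$ are only guaranteed to be $\frac{2}{\beta-1}\beta^{-(n+1)}$ apart. For $\beta=4$, the points $5/12$ and $7/12$ of $S_L$ differ in the first digit and are $1/6<1/4$ apart. The correction multiplies your series by $((\beta-1)/2)^\eta$ and changes nothing else. Finally, your atomlessness argument needs $\beta\ge4$. This is automatic, because for $\beta=3$ no admissible $\eta$ exists.
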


\begin{proof}
Each $\xi\in D_L$ has a base-$b$ expansion with exactly $L$ digits, including possibly leading zeros.  The
condition $1\le\xi\le \beta-2$ means that this block of $L$ base-$b$ digits
is neither identically $0$ nor identically $b-1$.

Take $x\in S_L$, and use the base-$b$ expansion obtained by concatenating
the $L$-digit base-$b$ blocks of $\xi_1,\xi_2,\ldots$.  Fix $k\ge0$.
The first $2L$ base-$b$ digits of $\fp{b^k x}$ contain at least one
entire $L$-digit block from this concatenation.  That entire block contains
at least one digit different from $0$ and at least one digit different from
$b-1$.  Therefore, the first $2L$ base-$b$ digits of $\fp{b^k x}$ are
not all zero and are not all $b-1$.  Hence
$     b^{-2L}\le \fp{b^k x}\le1-b^{-2L}
$.
Since $k\ge0$ was arbitrary, $x\in T_{2L}$, proving \eqref{eq:SL-subset-T2L}.

The set $S_L$ is compact.  Indeed, it is the image of the compact space
$D_L^{\N}$ under the continuous map
$     \Psi((\xi_i)_{i\ge1})=\sum_{i=1}^{\infty}\xi_i \beta^{-i}
$.
Moreover, $S_L$ is generated by the similarities
$     \psi_\xi(x)=(x+\xi)/{\beta}$, $\xi\in D_L,
$
in the sense that
$     S_L=\bigcup_{\xi\in D_L}\psi_\xi(S_L)
$.
Set
\[     d_L:=\frac{\log |D_L|}{\log \beta}     =\frac{\log(\beta-2)}{\log \beta}.
\]
The open intervals $\psi_\xi((0,1))$, $\xi\in D_L$, are pairwise disjoint
and contained in $(0,1)$.  Hence the iterated-function system satisfies the
open set condition, with open set $(0,1)$.  The standard dimension theorem
for self-similar sets satisfying the open set condition therefore gives
$\dimH S_L=d_L$; see \cite[Theorem~9.3]{Falconer2014}.  This proves
\eqref{eq:SL-dimension}.

If $\beta=3$, then $D_L=\{1\}$, $S_L$ is a singleton, and $d_L=0$.  In
this case the energy assertion is vacuous, because there is no $\eta$
satisfying \eqref{eq:eta-SL-condition}.  We may therefore assume from now on
that $\beta>3$, equivalently $d_L>0$.

Let $\mu_L$ be the push-forward under $\Psi$ of the product measure on
$D_L^{\N}$ that gives each digit mass $(\beta-2)^{-1}$.  Then $\mu_L$ is a
Borel probability measure supported on $S_L$, and it assigns mass
$(\beta-2)^{-m}$ to every level-$m$ cylinder
$     \psi_{\xi_1}\circ\cdots\circ\psi_{\xi_m}(S_L)$, where $
      \xi_1,\ldots,\xi_m\in D_L
$.
Let $I\subset[0,1]$ be an interval with $0<|I|\le1$, and choose $m\ge0$
such that
$     \beta^{-(m+1)}<|I|\le \beta^{-m}
$.
The interval $I$ intersects at most three level-$m$ base-$\beta$ intervals.
Therefore
\begin{equation}     \mu_L(I)\le 3(\beta-2)^{-m}     =3\beta^{-md_L}     \le 3\beta^{d_L}|I|^{d_L}.     \label{eq:muL-interval-bound}
\end{equation}
The same estimate holds for singleton intervals by decreasing nondegenerate
intervals to the singleton.  Since $d_L>0$, letting $I\downarrow\{x\}$ in
\eqref{eq:muL-interval-bound} gives $\mu_L(\{x\})=0$ for every $x$.  Hence
$\mu_L$ has no atoms.

Now fix $0<\eta<d_L$, and let $X_1,X_2$ be independent random variables
with law $\mu_L$.  Since $\mu_L$ is atomless, we have $\mathbb P(X_1=X_2)=0$ and
\[
\begin{aligned}
\iint |t-s|^{-\eta}\,\mu_L(\dd t)\mu_L(\dd s)
&=\E[\abs{X_1-X_2}^{-\eta}]                                      =\eta\int_0^\infty r^{-\eta-1}\mathbb P(|X_1-X_2|<r)\,\dd r.
\end{aligned}
\]
For $0<r\le1/2$, the interval $(x-r,x+r)\cap[0,1]$ has length at most
$2r$.  Hence \eqref{eq:muL-interval-bound} gives, for every $x\in[0,1]$,
\[     \mu_L((x-r,x+r)\cap[0,1])     \le 3\beta^{d_L}(2r)^{d_L}     =3(2\beta)^{d_L}r^{d_L}.
\]
For $1/2<r\le1$, we use only the trivial bound
$\mu_L((x-r,x+r)\cap[0,1])\le1$.  Since $r^{d_L}\ge2^{-d_L}$, this gives
\[     \mu_L((x-r,x+r)\cap[0,1])     \le 2^{d_L}r^{d_L}     \le 3(2\beta)^{d_L}r^{d_L}.
\]
Thus, with $C_L:=3(2\beta)^{d_L}$, we have for all $0<r\le1$,
\[     \mathbb P(|X_1-X_2|<r)     =\int \mu_L((x-r,x+r)\cap[0,1])\,\mu_L(\dd x)     \le C_L r^{d_L}.
\]
For $r>1$, $\mathbb P(|X_1-X_2|<r)\le1$.  Hence
\[     \eta\int_0^1 r^{-\eta-1}\mathbb P(|X_1-X_2|<r)\,\dd r     \le \eta C_L\int_0^1 r^{d_L-\eta-1}\dd r<\infty,
\]
and
\[     \eta\int_1^\infty r^{-\eta-1}\mathbb P(|X_1-X_2|<r)\,\dd r     \le \eta\int_1^\infty r^{-\eta-1}\dd r=1.
\]
This proves \eqref{eq:muL-energy}.
\end{proof}

The following proposition goes beyond the covariance estimates already available in \cite[Supplement, Lemmas~A.1,~A.2,~and A.3]{SchiedZhang2026}. Lemmas~A.1~and~A.2 show that, in the regimes
$H<K$ and $H=K$, respectively, the increment variance has the 
global lower bounds of Hurst type, namely of order $|t-s|^{2H}$ in the
subcritical case $H<K$ and of order $|t-s|^{2H}\log(1/|t-s|)$ in the critical case $H=K$.
These estimates are sufficient for the Hausdorff-dimension lower bound whenever
the fractional Brownian component is at least as rough as the Weierstrass
component.  In the regime where $     H>K$,
 however, the relevant scale is
$|t-s|^{2K}$, and the lower bound is more delicate because cancellations can
occur across the discontinuities of the fractional-part map.  Therefore, a corresponding lower bound for the increment variance 
can only be obtained on the sets $T_N$, whose $b$-adic orbits
stay uniformly away from these discontinuities. In \cite[Supplement, Lemma~A.3]{SchiedZhang2026}, a corresponding bound was obtained for the case $K\in(2H-1,H)$. 
The following Proposition~\ref{prop:lower-on-TN} now extends this result to the case $     H>K$ and $H>1/2$.

\begin{proposition}\label{prop:lower-on-TN}
Assume $     H>K$ and $H>1/2$.
For every $N\in\N$, there are constants $c_N>0$ and $\delta_N>0$ such
that, for all $s,t\in T_N$ with $0<|t-s|<\delta_N$,
\begin{equation}     \E[\abs{Y(t)-Y(s)}^2]\ge c_N |t-s|^{2K}.     \label{eq:TN-lower-bound}
\end{equation}
\end{proposition}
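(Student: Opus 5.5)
Since $K<H<1$, we have $K=-\log_b\alpha$ and $\alpha=b^{-K}$. Fix $s,t\in T_N$ with $b^{-(n+1)}\le|t-s|<b^{-n}$, where $n$ is large. Split $Y(t)-Y(s)=A+B$ with
\[
A=\sum_{m<n-N'}\alpha^m\big(B_H(\fp{b^mt})-B_H(\fp{b^ms})\big)
\]
(low frequencies), where $N'$ is a constant depending on $N$, and let $B$ be the remaining high-frequency tail. The variance of $B$ carries the scale $|t-s|^{2K}$: every term with $m\ge n$ has $L^2$-norm at most $C\alpha^m\asymp b^{-Km}$, so the tail $m\ge n$ contributes $O(b^{-Kn})$ in $L^2$, which is the right order but not a lower bound. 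For $m<n-N'$, the membership $s,t\in T_N$ guarantees that $\fp{b^ms}$ and $\fp{b^mt}$ lie in the same continuity interval, since $|b^mt-b^ms|<b^{m-n}\le b^{-N'}<b^{-N}$ and both points stay at distance $\ge b^{-N}$ from the integers. Hence no wrap-around occurs, and the $m$-th term is a genuine fBm-bridge increment of length $b^m|t-s|$. Its $L^2$-norm is of order $b^{-Km}(b^m|t-s|)^H$, which sums geometrically (because $H>K$) to $O(|t-s|^H b^{(H-K)n})=O(b^{-Kn})$, dominated by the top terms $m\approx n$.

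The cleanest plan is to single out one frequency and bound the variance from below by a conditional variance. Choose $m_0=n-N''$, a fixed offset from the scale $n$. The term $\alpha^{m_0}\big(B_H(x)-B_H(y)\big)$ with $x=\fp{b^{m_0}t}$, $y=\fp{b^{m_0}s}$ and $|x-y|\in[b^{-N''-1},b^{-N''})$ has variance $\asymp b^{-2Km_0}$. I would then show that the remaining sum cannot cancel it, using the local nondeterminism of fractional Brownian motion: $\Var\big(W_H(x)-W_H(y)\,\big|\,\mathcal F\big)\ge c|x-y|^{2H}$, where $\mathcal F$ is generated by the values of $W_H$ outside a neighbourhood of $[y,x]$ whose size is comparable to $|x-y|$. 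This is where $T_N$ enters in an essential way. The other terms evaluate $B_H$ at points $\fp{b^mt}$ and $\fp{b^ms}$, $m\ne m_0$, and these points must be kept quantitatively away from $[y,x]$. That separation fails in general, so I would instead use the $L^2$-orthogonal decomposition on the Gaussian Hilbert space $\calH$ directly. Write $Y(t)-Y(s)=W_H(h)$ with $h=\sum_m\alpha^m h_m$, plus the bridge correction. The $\kappa$-correction is harmless because $\tau>H$, exactly as in the proof of Lemma~\ref{lem:expectation}. The bridge correction term $\big(\sum_m\alpha^m\Delta\kappa_m\big)W_H(1)$ has coefficient $O(|t-s|^{\tau}b^{(\tau-K)n})=O(b^{-Kn}b^{-(\tau-K)\cdot 0})$; this needs care, since it is of the same order, and I would absorb it by conditioning on $W_H(1)$ as well.

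The key step is the lower bound $\|h\|_{\calH}\ge c_N b^{-Kn}$. Rescale by $b^{Kn}$ and shift the index to $r=n-m$. The normalized $h$ then becomes a function $F_n$ of a finite digit window around position $n$ of $s$ and $t$ (all but a geometric tail), in the spirit of the representation \eqref{eq:branch-representation}. Since $s,t\in T_N$, the relevant digit configurations range over a \emph{compact} set. This set consists of pairs of sequences whose shifted orbits avoid the $b^{-N}$-neighbourhoods of the integers, with normalized separation in $[b^{-1},1]$. The normalized variance is a continuous function on this compact set; continuity is uniform because the tails are geometrically small, using $H>K$ for the low frequencies and $K>0$ for the high ones. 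It therefore suffices to show that it never vanishes. Nonvanishing is the main obstacle. A vanishing variance would mean an exact identity $\sum_r b^{Kr}\big(W_H(u_r)-W_H(v_r)\big)=0$ almost surely, with all intervals $[v_r,u_r]$ nondegenerate, lying inside $(0,1)$ after wrap-around is excluded, and having lengths $b^{-r}|t-s|b^n$ that shrink geometrically.

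To exclude this I would first try the following argument for $H>1/2$. The $\calH$-element of $W_H(u)-W_H(v)$ is $\1_{[v,u]}$, and $\calH$ contains the indicator step functions with norm $\|f\|^2=c_H\iint f(x)f(y)|x-y|^{2H-2}\,\dd x\,\dd y$. The limiting object is then the function $\sum_r b^{Kr}\1_{[v_r,u_r]}$ (signed), and its $\calH$-norm vanishes only if this function vanishes almost everywhere, because the kernel is positive definite. It cannot vanish almost everywhere: the finest intervals carry the largest coefficients $b^{Kr}$, and these cannot be cancelled by coarser indicators, as the jumps at the endpoints of the level-$r$ interval have size $b^{Kr}$, which exceeds the total $\sum_{r'<r}b^{Kr'}$ of the coarser coefficients once $b^K$ is large enough. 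For small $b^K$ I would instead compare the jump at a single endpoint by looking at the coarsest level with a jump that no other level matches. Such a level exists by the digit-avoidance in $T_N$, which prevents the endpoints at different levels from coinciding. This is precisely where the hypothesis $K\le 2H-1$ of the previous work is replaced: its role is played by the positivity of the $H>1/2$ kernel on $L^1$-type functions. The positivity and compactness together yield $c_N$ and $\delta_N$.
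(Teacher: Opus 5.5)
\paragraph{Verdict.} Your skeleton matches the paper's proof, but the one step that carries it, showing that the limiting integrand is not zero a.e., is not proved. The shared parts are: no wrap-around at the fine scales forced by $T_N$; the representation $Y(t)-Y(s)=W_H(h)$ with $h\in L^{1/H}$ and $L^{1/H}\hookrightarrow\calH$; continuity of the normalized integrand on a compact configuration space with geometrically small tails; and injectivity of the embedding. The paper phrases the compactness as a contradiction along a diagonal subsequence.

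\paragraph{Where the nonvanishing argument fails.} Because you use compactness, nonvanishing must hold for \emph{limit} configurations. These contain infinitely many fine levels $j\to-\infty$ with coefficients $\alpha^j=b^{-Kj}\to\infty$.
\begin{itemize}
\item There is no finest level. Your first comparison, a level-$r$ jump against the total of the \emph{coarser} coefficients, ignores the finer levels, and those carry the larger coefficients.
\item Your fallback assumes that $T_N$ prevents endpoints at different levels from coinciding. This is false, because periodic points of $x\mapsto\fp{bx}$ can lie in $T_N$. For example, for $b\ge3$ the point $s=1/(b-1)$ is a fixed point in $T_1$. So for $t\in T_N$ close to $s$, all the no-wrap intervals $[\fp{b^ms},\fp{b^mt}]$ share the left endpoint $s$, and the endpoint bookkeeping breaks down.
\item Fine intervals can accumulate at any given endpoint. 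One-sided limits, and hence jumps, of this unbounded $L^{1/H}$ function need not exist.
\end{itemize}

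\paragraph{The missing idea: orientation.} For the no-wrap levels (the paper's $j\le -N$, your $m<n-N'$) one has $\fp{b^mt}=\fp{b^ms}+b^m(t-s)$. So for $t>s$ every such term is $+\alpha^m\1_{[\fp{b^ms},\fp{b^mt}]}$, all with the same sign, and this persists in the limit.
\begin{itemize}
\item The fine part $P=\sum_{j\le-N}\alpha^j\1_{I_j}$ is therefore a nonnegative function. It satisfies $P\ge\alpha^{j_0}$ on $I_{j_0}$, where $|I_{j_0}|=b^{j_0}\theta>0$.
\item Everything else is bounded below by a constant $-C_N$. The coarse signed indicators are at least $-\sum_{j>-N}\alpha^j$. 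The bridge constants are summable: for $j>-N$ by $2\|\kappa\|_\infty\sum_{j>-N}\alpha^j$, and for $j\le-N$ by $C_\kappa b^{(\tau-K)j}$ with $\tau>K$.
\item Choosing $j_0$ with $\alpha^{j_0}>C_N$ gives $G>0$ on a set of positive measure, contradicting $G=0$ a.e.
\end{itemize}
This needs no jump analysis and no condition on $b^K$. It also removes the need to condition on $W_H(1)$: the bridge correction simply enters the limiting integrand as a bounded constant and is absorbed into $-C_N$.
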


\begin{proof}
 Assume that \eqref{eq:TN-lower-bound} is false.  Then, for every $n\ge1$, there are
$s_n,t_n\in T_N$ such that
\begin{equation}     0<t_n-s_n<1/n,     \qquad     \E[\abs{Y(t_n)-Y(s_n)}^2]<n^{-1}|t_n-s_n|^{2K}.     \label{eq:contradiction-assumption}
\end{equation}
By passing to a subsequence if necessary, we may assume that $r_n:=t_n-s_n\downarrow0$.  For each
$n$, choose $M_n\in\{0,1,2,\dots\}$ such that $   b^{-M_n-1}<r_n\le b^{-M_n}$. Then $M_n\to\infty$ and $\theta_n:=b^{M_n}r_n\in(1/b,1]$. By passing to another subsequence if necessary, we may furthermore assume that $\theta_n\to\theta$ for some $   \theta\in[1/b,1]$. 

For $j\in\mathbb Z$ with $M_n+j\ge0$, define
$     z_{n,j}=\fp{b^{M_n+j}s_n}$ and     $     y_{n,j}=\fp{b^{M_n+j}t_n}
$.
For $j\in\mathbb Z$ with $M_n+j<0$, $z_{n,j}$ and $y_{n,j}$ will not
be used.  For each  $j\in\mathbb Z$, we have $M_n+j\ge0$ for all sufficiently large $n$, and the compactness of  $[0,1]$ yields  $ z_j,y_j\in[0,1]$ to which $z_{n,j}$ and $y_{n,j}$  converge along a suitable subsequence. By a diagonal subsequence argument, we may assume that, after relabelling of the resulting subsequence,
\begin{equation}
       \lim_{n\uparrow\infty} \theta_n=\theta,\qquad     \lim_{n\uparrow\infty} z_{n,j}= z_j,\qquad
      \lim_{n\uparrow\infty}   y_{n,j}= y_j\qquad\text{for every  $j\in\mathbb Z$.}     \label{eq:diagonal-limits}
\end{equation}

Fix $j\le -N$.  For all sufficiently large $n$, we have $M_n+j\ge0$.  Since
$s_n,t_n\in T_N$, it follows that 
$     z_{n,j},y_{n,j}\in[b^{-N},1-b^{-N}]
$.
Moreover, 
\begin{equation}     b^{M_n+j}t_n=b^{M_n+j}s_n+b^j\theta_n.     \label{eq:scaled-time-relation}
\end{equation}
Because $j\le -N$ and $\theta_n\le1$,
\begin{equation}     0<b^j\theta_n\le b^{-N}.     \label{eq:backward-scale-bound}
\end{equation}
From $z_{n,j}\le1-b^{-N}$ and \eqref{eq:backward-scale-bound},
\begin{equation}     z_{n,j}+b^j\theta_n\le1.     \label{eq:no-wrap-preliminary}
\end{equation}
If equality held in \eqref{eq:no-wrap-preliminary}, then
$     y_{n,j}=\fp{z_{n,j}+b^j\theta_n}=0
$,
contradicting $y_{n,j}\ge b^{-N}$.  Therefore
$     z_{n,j}+b^j\theta_n<1
$,
and \eqref{eq:scaled-time-relation} implies the no-wrap identity
\begin{equation}     y_{n,j}=z_{n,j}+b^j\theta_n.     \label{eq:no-wrap-identity}
\end{equation}
Letting $n\to\infty$ in \eqref{eq:no-wrap-identity} gives
\begin{equation}     y_j=z_j+b^j\theta,\qquad j\le -N.     \label{eq:limit-no-wrap-identity}
\end{equation}

By Lemma~\ref{lem:Y-integrand}, $Y(t_n)-Y(s_n)$ is represented in
$L^2(\Omega)$ by the deterministic integrand
\begin{equation}     g_n(x)=\sum_{m=0}^{\infty}\alpha^m     \left(     J(\fp{b^m s_n},\fp{b^m t_n})(x)     -\kappa(\fp{b^m t_n})     +\kappa(\fp{b^m s_n})     \right),     \label{eq:gn-def}
\end{equation}
and the series in \eqref{eq:gn-def} converges absolutely in $L^p[0,1]$ for $p=1/H$.
For $j\in\mathbb Z$, define
\[     A_{n,j}(x)=     \begin{cases}     J(z_{n,j},y_{n,j})(x)-\kappa(y_{n,j})+\kappa(z_{n,j}),     &M_n+j\ge0,\\     0,&M_n+j<0.     \end{cases}
\]
Using $\theta_n=b^{M_n}r_n$ and that $\alpha=b^{-K}$ for $     H>K$ and $H>1/2$ yields that $\alpha^{M_n+j}=\alpha^j\theta_n^{-K}r_n^K$. 
Then reindexing $m=M_n+j$ in \eqref{eq:gn-def}  gives
\begin{equation}
\begin{aligned}     r_n^{-K}g_n(x)     =\theta_n^{-K}\sum_{j\in\mathbb Z}\alpha^j A_{n,j}(x).
\end{aligned}     \label{eq:gn-reindexed-normalized}
\end{equation}
For $j\in\mathbb Z$, define the limiting terms
$     A_j(x)=J(z_j,y_j)(x)-\kappa(y_j)+\kappa(z_j)
$.
We will now prove that
\begin{equation}     \sum_{j\in\mathbb Z}\alpha^j A_{n,j}     \longrightarrow     \sum_{j\in\mathbb Z}\alpha^j A_j     \quad\text{in }L^p[0,1].     \label{eq:Anj-Lp-convergence}
\end{equation}
First, for $j\ge0$, \eqref{eq:gn-term-Lp-bound} gives
$     \norm{\alpha^j A_{n,j}}_{L^p}     \le C_0\alpha^j$ and 
       $        \norm{\alpha^j A_j}_{L^p}     \le C_0\alpha^j$, where $C_0=1+2\|\kappa\|_\infty$.  Hence
\begin{equation}     \sum_{j\ge R}\sup_n \norm{\alpha^j A_{n,j}}_{L^p}     +\sum_{j\ge R}\norm{\alpha^j A_j}_{L^p}     \le 2C_0\sum_{j\ge R}\alpha^j     \longrightarrow0     \label{eq:positive-tail-bound}
\end{equation}
as $R\to\infty$.
Second, recall that $\kappa$ is $\tau$-H\"older continuous for some $\tau\in(H,1]$. Thus there is a finite constant $C_\kappa$ such that
\begin{equation}     |\kappa(u)-\kappa(v)|\le C_\kappa |u-v|^\tau,     \qquad u,v\in[0,1].     \label{eq:kappa-holder-bound}
\end{equation}
Hence, for $j\le -N$ and $M_n+j\ge0$, \eqref{eq:no-wrap-identity}, Lemma
\ref{lem:J-cont} gives
\begin{equation}
\begin{aligned}     \norm{\alpha^j J(z_{n,j},y_{n,j})}_{L^p}   &=\alpha^j |y_{n,j}-z_{n,j}|^{1/p}     =\alpha^j (b^j\theta_n)^H     \le b^{(H-K)j},                                  \\   \norm{\alpha^j(\kappa(y_{n,j})-\kappa(z_{n,j}))}_{L^p}   &\le C_\kappa\,\alpha^j |y_{n,j}-z_{n,j}|^\tau     \le C_\kappa\, b^{(\tau-K)j}.
\end{aligned}   \label{eq:negative-tail-term-bounds}
\end{equation}
If $M_n+j<0$, then $A_{n,j}=0$, and so 
$\|\alpha^jA_{n,j}\|_{L^p}\le  b^{(H-K)j}+C_\kappa\, b^{(\tau-K)j}$ hold for all $n$.  The same bounds hold for
the limiting terms $A_j$, because of \eqref{eq:limit-no-wrap-identity}.
Since
$   H-K>0$ and $ \tau-K>0$,
we have
$   \sum_{j\le -R} b^{(H-K)j}<\infty$ and 
       $   \sum_{j\le -R} b^{(\tau-K)j}<\infty
$,
and both tails tend to $0$ as $R\to\infty$.  Therefore
\begin{equation}   \sum_{j\le -R}\sup_n \norm{\alpha^j A_{n,j}}_{L^p}   +\sum_{j\le -R}\norm{\alpha^j A_j}_{L^p}   \longrightarrow0   \label{eq:negative-tail-bound}
\end{equation}
as $R\to\infty$.
Third, for fixed $R>N$, the sum over $-R<j<R$ is finite.  For each fixed
such $j$, \eqref{eq:diagonal-limits}, Lemma~\ref{lem:J-cont}, and continuity of $\kappa$
give
\begin{equation}   \norm{A_{n,j}-A_j}_{L^p}\longrightarrow0.   \label{eq:finite-window-convergence}
\end{equation}
Combining the finite convergence \eqref{eq:finite-window-convergence} with the two uniform tail estimates
\eqref{eq:positive-tail-bound} and \eqref{eq:negative-tail-bound} proves \eqref{eq:Anj-Lp-convergence}.

By \eqref{eq:gn-reindexed-normalized}, \eqref{eq:Anj-Lp-convergence}, and $\theta_n^{-K}\to\theta^{-K}$,
\[   r_n^{-K}g_n   \longrightarrow   G:=\theta^{-K}\sum_{j\in\mathbb Z}\alpha^j A_j   \quad\text{in }L^p[0,1].
\]
Lemma~\ref{lem:hilbert} gives convergence in $\calH$:
\begin{equation}   \norm{r_n^{-K}g_n-G}_{\calH}\le C_H   \norm{r_n^{-K}g_n-G}_{L^p}\longrightarrow0.   \label{eq:calH-convergence}
\end{equation}
On the other hand, by \eqref{eq:contradiction-assumption},
\begin{equation}   \norm{r_n^{-K}g_n}_{\calH}^2   =r_n^{-2K}\E[\abs{Y(t_n)-Y(s_n)}^2]   \le n^{-1}\longrightarrow0.   \label{eq:normalized-calH-vanishes}
\end{equation}
Equations \eqref{eq:calH-convergence} and \eqref{eq:normalized-calH-vanishes} imply
$   \norm{G}_{\calH}=0
$.
By the injectivity statement in Lemma~\ref{lem:hilbert},
\begin{equation}   G=0\quad\text{Lebesgue-a.e.~on }[0,1].   \label{eq:G-vanishes-ae}
\end{equation}

It remains to prove that \eqref{eq:G-vanishes-ae} is impossible.  For $j\le -N$, \eqref{eq:limit-no-wrap-identity}
gives
\[   J(z_j,y_j)=\1_{I_j},   \qquad   I_j=[z_j,z_j+b^j\theta],   \qquad   |I_j|=b^j\theta>0,
\]
where the latter inequality follows from the fact that  $\theta\in[1/b,1]$. Let
\begin{equation}   P(x)=\sum_{j\le -N}\alpha^j\1_{I_j}(x).   \label{eq:P-def}
\end{equation}
By \eqref{eq:negative-tail-term-bounds} for the limiting terms,
\[   \sum_{j\le -N}\norm{\alpha^j\1_{I_j}}_{L^p}   \le\sum_{j\le -N} b^{(H-K)j}<\infty.
\]
Thus \eqref{eq:P-def} converges in $L^p$, and after changing $P$ on a null set,
\begin{equation}   P(x)=\sum_{j\le -N}\alpha^j\1_{I_j}(x)   \quad\text{for every }x\text{ outside a null set }E.   \label{eq:P-pointwise-version}
\end{equation}
For $x\notin E$, all summands in \eqref{eq:P-def} are nonnegative.  Hence, $P(x)\ge \alpha^j$ for every 
fixed $ x\in I_j\setminus E$ and $j\le -N$. Define the remainder
\begin{equation}
\begin{aligned}   R(x)   &=   \sum_{j>-N}\alpha^j J(z_j,y_j)(x)   +\sum_{j\in\mathbb Z}\alpha^j     \big(-\kappa(y_j)+\kappa(z_j)\big).
\end{aligned}   \label{eq:R-def}
\end{equation}
Then
\begin{equation}   G(x)=\theta^{-K}\big(P(x)+R(x)\big)   \quad\text{a.e.}   \label{eq:G-P-R-decomposition}
\end{equation}
For the signed indicators in \eqref{eq:R-def}, we have 
$J(z_j,y_j)(x)\ge -1$ for $ j>-N$,
and therefore
\begin{equation}   \sum_{j>-N}\alpha^j J(z_j,y_j)(x)   \ge -\sum_{j>-N}\alpha^j   >-\infty.   \label{eq:signed-indicator-lower-bound}
\end{equation}
For the bridge constants with $j>-N$,
\begin{equation*}   \sum_{j>-N}\alpha^j   |-\kappa(y_j)+\kappa(z_j)|   \le 2\|\kappa\|_\infty\sum_{j>-N}\alpha^j   <\infty.   
\end{equation*}
For the bridge constants with $j\le -N$, \eqref{eq:limit-no-wrap-identity} and \eqref{eq:kappa-holder-bound} give
\begin{equation}
   \alpha^j|\kappa(y_j)-\kappa(z_j)|   \le C_\kappa\,\alpha^j (b^j\theta)^\tau   \le C_\kappa\,b^{(\tau-K)j}\quad\text{and}\quad     \sum_{j\le -N}b^{(\tau-K)j}<\infty.   \label{eq:negative-bridge-constant-sum}
\end{equation}
Equations \eqref{eq:signed-indicator-lower-bound}--\eqref{eq:negative-bridge-constant-sum} imply that there is a finite constant $C_N$ such
that
\begin{equation}   R(x)\ge -C_N   \quad\text{for Lebesgue-a.e.~}x\in[0,1].   \label{eq:R-lower-bound}
\end{equation}
Choose $j_0\le -N$ so negative that
$   \alpha^{j_0}>C_N
$.
 Let $F$ be the full-measure set on which \eqref{eq:P-pointwise-version}, \eqref{eq:G-P-R-decomposition},
and \eqref{eq:R-lower-bound} hold.  Since $|I_{j_0}|>0$, the set $I_{j_0}\cap F$ has
positive Lebesgue measure.  On that set,
\[   G(x)=\theta^{-K}(P(x)+R(x))   \ge \theta^{-K}(\alpha^{j_0}-C_N)>0.
\]
Therefore, $G\ne0$ on a set of positive Lebesgue measure, contradicting
\eqref{eq:G-vanishes-ae}.  The contradiction proves the proposition.
\end{proof}

\begin{proof}[Proof of Theorem~\ref{thm:hausdorff-high-hurst}] The proof is now analogous to the proof of \cite[Theorem 2.6]{SchiedZhang2026}. Specifically, the upper bound for the Hausdorff dimension follows from the H\"older regularity of the sample paths of $Y$ \cite[Proposition~A.1]{SchiedZhang2024} combined with the standard covering estimate for graphs of H\"older
functions in 
\cite[Section~11.1]{Falconer2014}.
    The lower bound is obtained by a potential-theoretic lower bound in complete analogy to the case $K<H$ in the proof of \cite[Theorem 2.6]{SchiedZhang2026}, using the energy bound \eqref{eq:muL-energy} and replacing \cite[Supplement,~Lemma~A.3]{SchiedZhang2026} with Proposition~\ref{prop:lower-on-TN}.
\end{proof}

\appendix
\section{Appendix}\label{appendix}

In this appendix, we collect several auxiliary facts on fractional Brownian motion.
Let $\mathcal H_H$ be the canonical Gaussian Hilbert space of the fractional
Brownian motion $W_H$, that is, the completion of the linear span of
$\{\1_{[0,t]}:0\le t\le1\}$ under the inner product
\[   \big\langle \1_{[0,s]},\1_{[0,t]}\big\rangle_{\mathcal H_H}        =\E[W_H(s)W_H(t)]   =\frac12\big(s^{2H}+t^{2H}-|t-s|^{2H}\big)   =:R_H(s,t).
\]
Equivalently, $W_H$ is an isonormal Gaussian process over $\calH$.  Thus, for
$f\in\calH$, we write
\[   \int_0^1 f\,\dd W_H := W_H(f),
\]
and for all $f,g\in\calH$,
\begin{equation*}   \E\left[   \left(\int_0^1 f\,\dd W_H\right)   \left(\int_0^1 g\,\dd W_H\right)   \right]   =   \left\langle f,g\right\rangle_{\calH}.
\end{equation*}
This is the deterministic Wiener integral with respect to fractional Brownian
motion.  The notation $\int_0^1 f\,\dd W_H$ is understood in this Hilbert-space
sense. The space $\mathcal H_H$ is separable, since the linear span of
$\{\1_{[0,r]}:r\in[0,1]\cap\mathbb Q\}$ is dense.

For the  proof of Theorem \ref{thm:hausdorff-high-hurst}, we need  the concrete
deterministic-integrand realization of the canonical Hilbert space for $H>1/2$.

\begin{lemma}\label{lem:hilbert}Assume $H>1/2$, and put $p=1/H$.  For step functions $f,g$ on
$[0,1]$, the inner product on $\calH$ has the concrete
representation
\begin{equation}   \left\langle f,g\right\rangle_{\calH}   =   H(2H-1)\int_0^1\int_0^1   f(x)g(y)|x-y|^{2H-2}\,\dd x\,\dd y .   \label{eq:calH-inner-product}
\end{equation}
Moreover, $L^p[0,1]$ embeds continuously into $\calH$: there is a finite
constant $C_H$ such that
\begin{equation}   \norm{f}_{\calH}   \le C_H\norm{f}_{L^p[0,1]},   \qquad f\in L^p[0,1].   \label{eq:calH-Lp-bound}
\end{equation}
Under this embedding, \eqref{eq:calH-inner-product} remains valid for all
$f,g\in L^p[0,1]$.  Finally,
\begin{equation*}   \norm{f}_{\calH}=0,\quad f\in L^p[0,1],   \quad\Longrightarrow\quad   f=0\text{ Lebesgue-a.e.}
\end{equation*}
In particular, the embedding $L^p[0,1]\hookrightarrow\calH$ is injective.
\end{lemma}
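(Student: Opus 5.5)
We prove the three assertions of Lemma~\ref{lem:hilbert} in order, working first with step functions and extending by density.

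\emph{The integral formula.} For $f=\1_{[0,s]}$ and $g=\1_{[0,t]}$ we use
\[
H(2H-1)\,|x-y|^{2H-2}=\partial_x\partial_y R_H(x,y),\qquad x\neq y .
\]
Integrating over $[0,s]\times[0,t]$ gives
\[
R_H(s,t)-R_H(s,0)-R_H(0,t)+R_H(0,0)=R_H(s,t).
\]
The singularity is integrable because $2H-2>-1$ for $H>1/2$. Bilinearity then yields \eqref{eq:calH-inner-product} for all step functions.

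\emph{The $L^p$ bound.} For step functions $f$,
\[
\norm{f}_{\calH}^2\le H(2H-1)\iint |f(x)|\,|f(y)|\,|x-y|^{2H-2}\,\dd x\,\dd y .
\]
We bound the right-hand side by the Hardy--Littlewood--Sobolev inequality with kernel exponent $\lambda=2-2H\in(0,1)$ in dimension one. HLS requires exponents $p=q$ with $2/p+\lambda=2$, that is, $1/p=H$ and $p=1/H$. It gives
\[
\norm{f}_{\calH}^2\le C\norm{f}_{L^{1/H}}^2 .
\]
(On a bounded interval one could instead use Young's inequality, but HLS gives exactly the critical exponent $1/H$, which is why $p=1/H$ appears.)

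Since step functions are dense in $L^p[0,1]$, the linear map from step functions into $\calH$ extends uniquely to a bounded map on $L^p[0,1]$. The double integral is continuous on $L^p\times L^p$, again by HLS, so \eqref{eq:calH-inner-product} persists for all $f,g\in L^p[0,1]$.

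\emph{Injectivity.} This is the main step. We use the Fourier representation of the Riesz kernel: for some constant $c_H>0$,
\[
\iint f(x)f(y)\,|x-y|^{2H-2}\,\dd x\,\dd y=c_H\int_{\R}|\hat f(\xi)|^2\,|\xi|^{1-2H}\,\dd\xi ,
\]
where $f$ is extended by zero outside $[0,1]$. This identity holds first for $f\in L^1\cap L^2$, hence for step functions. We would extend it to $f\in L^p$ by approximation, applying Fatou's lemma on the Fourier side and HLS continuity on the left.

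Since $f\in L^p[0,1]\subset L^1$, the transform $\hat f$ is continuous. If $\norm{f}_{\calH}=0$, the weighted integral vanishes, so $\hat f=0$ almost everywhere, hence everywhere by continuity. Therefore $f=0$ Lebesgue-a.e.

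The only delicate point is justifying the Fourier identity for general $f\in L^p$. The steps are:
\begin{itemize}
\item approximate $f$ in $L^p$ by step functions $f_k$;
\item the left-hand sides for $f_k$ converge by HLS;
\item the $\hat f_k$ converge to $\hat f$ uniformly, since $L^p[0,1]\subset L^1$;
\item Fatou's lemma then gives $\int|\hat f|^2|\xi|^{1-2H}\,\dd\xi\le\liminf_k(\cdots)=\norm{f}_{\calH}^2/c'_H$.
\end{itemize}
This inequality alone suffices for injectivity.
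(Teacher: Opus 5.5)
Your proposal is correct and follows the paper's proof essentially step for step: the mixed-derivative/bilinearity computation for indicators, the one-dimensional Hardy--Littlewood--Sobolev inequality with $\lambda=2-2H$ together with density of step functions, and the Riesz-kernel Plancherel identity combined with continuity of $\hat f$ for $f\in L^1$. Your Fatou argument is a careful way of carrying out the paper's ``by approximation'' step, and the one-sided inequality it yields is indeed all that injectivity needs.
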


\begin{proof}
For $H>1/2$,
\[   \left\langle \1_{[0,u]},\1_{[0,v]}\right\rangle_{\calH}   =   R_H(u,v)=  H(2H-1)\int_0^u\int_0^v |x-y|^{2H-2}\,\dd x\dd y.
\]
By bilinearity, this proves \eqref{eq:calH-inner-product} for step functions.

The estimate \eqref{eq:calH-Lp-bound} follows from the one-dimensional
Hardy--Littlewood--Sobolev inequality, equivalently the fractional-integral
estimate recalled in \cite[Corollary~1.9.2]{Mishura2008}; see also
\cite[Chapters~V and~XIII]{SamkoKilbasMarichev1993}.  Since step functions
are dense in $L^p[0,1]$, this estimate extends the identity map on step
functions to a continuous linear map $L^p[0,1]\to\calH$.  By continuity,
\eqref{eq:calH-inner-product} remains valid for $f,g\in L^p[0,1]$.

It remains to prove injectivity.  After extending $f\in L^p[0,1]$ by zero
to $\R$, the Riesz-kernel Plancherel formula gives
\[   \norm{f}_{\calH}^2   =   c_H\int_{\R}|\widehat f(\xi)|^2|\xi|^{1-2H}\,\dd \xi,   \qquad c_H>0,
\]
first for smooth compactly supported functions and then by approximation using
\eqref{eq:calH-Lp-bound}.  If $\norm{f}_{\calH}=0$, then
$\widehat f(\xi)=0$ for Lebesgue-a.e.~$\xi\ne0$.  Since $p>1$ and
$[0,1]$ has finite measure, $f\in L^1(\R)$, so $\widehat f$ is
continuous.  Hence $\widehat f\equiv0$, and uniqueness of the Fourier
transform yields $f=0$ Lebesgue-a.e.
\end{proof}

For $a,c\in[0,1]$, put
\begin{equation*}   J(a,c):=   \begin{cases}   \1_{[a,c]},&a\le c,\\   -\1_{[c,a]},&a>c.   \end{cases}
\end{equation*}
With this deterministic-integral convention, bridge increments have the following immediate representation as Wiener integrals with explicitly oriented interval indicators:
\begin{equation}   B_H(c)-B_H(a)   =\int_0^1\left(J(a,c)-(\kappa(c)-\kappa(a))\right)\dd W_H .   \label{eq:bridge-increment-integral}
\end{equation}
Here the scalar $\kappa(c)-\kappa(a)$ is identified with the constant
function on $[0,1]$.
Summing the preceding representation over Weierstrass scales gives an integral representation for increments of $Y$.

\begin{lemma}\label{lem:Y-integrand}
Assume $H>1/2$ and put $p=1/H$.  For fixed $s,t\in[0,1]$, define
\[
\begin{aligned}   h_{s,t}(x):=\sum_{m=0}^{\infty}\alpha^m   \left(   J(\fp{b^m s},\fp{b^m t})(x)   -\kappa(\fp{b^m t})+\kappa(\fp{b^m s})   \right).
\end{aligned}
\]
Then the series defining $h_{s,t}$ converges absolutely in $L^p[0,1]$, and
\[   Y(t)-Y(s)=\int_0^1 h_{s,t}\dd W_H   \quad\text{in }L^2(\Omega).
\]
Consequently $Y(t)-Y(s)$ is a centered Gaussian random variable and
\[   \operatorname{Var}(Y(t)-Y(s))=\|h_{s,t}\|_{\calH}^2.
\]
\end{lemma}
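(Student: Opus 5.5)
The plan is to sum the bridge-increment representation \eqref{eq:bridge-increment-integral} over the scales $m$, using the $L^p$ embedding of Lemma~\ref{lem:hilbert} to control convergence in $\calH$. It then suffices to identify the resulting $L^2(\Omega)$-limit with $Y(t)-Y(s)$.

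\textbf{Step 1: absolute convergence in $L^p[0,1]$.} For each $m$, the $m$-th summand of $h_{s,t}$ is $J(a_m,c_m)-(\kappa(c_m)-\kappa(a_m))$, where $a_m=\fp{b^m s}$ and $c_m=\fp{b^m t}$. Its $L^p$-norm is bounded by
\[
\norm{J(a_m,c_m)}_{L^p}+\abs{\kappa(c_m)-\kappa(a_m)}\le |c_m-a_m|^{1/p}+2\|\kappa\|_\infty\le 1+2\|\kappa\|_\infty=:C_0 .
\]
Since $\alpha<1$, we get $\sum_m\alpha^m\|\cdot\|_{L^p}\le C_0/(1-\alpha)<\infty$. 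Hence the series converges absolutely in the Banach space $L^p[0,1]$, and the limit $h_{s,t}$ lies in $L^p[0,1]$. This is the same bound that is later quoted as \eqref{eq:gn-term-Lp-bound}.

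\textbf{Step 2: transfer to the Gaussian side.} Let $h^{(M)}$ denote the partial sum over $m\le M$. By linearity of the Wiener integral and \eqref{eq:bridge-increment-integral},
\[
\int_0^1 h^{(M)}\,\dd W_H=\sum_{m=0}^M\alpha^m\big(B_H(c_m)-B_H(a_m)\big)=Y_M(t)-Y_M(s),
\]
where $Y_M$ is the partial sum of \eqref{eq:ww}. By the isometry and \eqref{eq:calH-Lp-bound},
\[
\big\|W_H(h^{(M)})-W_H(h_{s,t})\big\|_{L^2(\Omega)}=\|h^{(M)}-h_{s,t}\|_{\calH}\le C_H\|h^{(M)}-h_{s,t}\|_{L^p}\to0 .
\]
Here $W_H(h_{s,t})$ means the Wiener integral of the image of $h_{s,t}$ under the embedding $L^p\hookrightarrow\calH$. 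One check is needed: for the indicator functions $J(a,c)$ and the constants, this embedding must agree with the canonical definition of $\calH$ via $\1_{[0,u]}$. This holds because both are step functions, and for step functions the embedding is the identity by construction in Lemma~\ref{lem:hilbert}.

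\textbf{Step 3: identification and conclusion.} The series \eqref{eq:ww} converges uniformly on every sample path, so $Y_M(t)-Y_M(s)\to Y(t)-Y(s)$ almost surely. Step 2 gives convergence of the same sequence in $L^2(\Omega)$, so the two limits coincide almost surely. This yields $Y(t)-Y(s)=\int_0^1 h_{s,t}\,\dd W_H$. Elements of the first Wiener chaos are centered Gaussian, so $Y(t)-Y(s)$ is centered Gaussian, and the isometry gives $\Var(Y(t)-Y(s))=\|h_{s,t}\|_{\calH}^2$.

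I expect no serious obstacle. The only point requiring care is the compatibility check in Step 2: the Wiener integral of a step function obtained through the $L^p$ embedding must equal the corresponding linear combination of values of $W_H$. This follows directly from the density and continuity argument in the proof of Lemma~\ref{lem:hilbert}.
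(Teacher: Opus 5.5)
Your proof is correct and follows the paper's argument. You use the same termwise bound $1+2\|\kappa\|_\infty$ to get absolute $L^p$-convergence, and the same embedding \eqref{eq:calH-Lp-bound} to obtain $L^2(\Omega)$-convergence of the Wiener integrals of the partial sums. The only cosmetic difference is how you identify the limit with $Y(t)-Y(s)$: you use pathwise uniform convergence of the series, whereas the paper uses $L^2(\Omega)$-convergence of the partial sums via $\sup_u\|B_H(u)\|_{L^2}<\infty$, and either works.
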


\begin{proof}
The bound
\begin{equation}
 \left\|   J(\fp{b^m s},\fp{b^m t})   -\kappa(\fp{b^m t})+\kappa(\fp{b^m s})
 \right\|_{L^p}
 \le 1+2\|\kappa\|_\infty
 \label{eq:gn-term-Lp-bound}
\end{equation}
and the summability of $\sum_m\alpha^m$ give absolute convergence of the
series for $h_{s,t}$ in $L^p[0,1]$.  For the partial sums
$h^{(q)}_{s,t}$, \eqref{eq:bridge-increment-integral} gives
\begin{equation*}   \sum_{m=0}^{q}\alpha^m   \big(B_H(\fp{b^m t})-B_H(\fp{b^m s})\big)   =\int_0^1 h^{(q)}_{s,t}\,\dd W_H .
\end{equation*}
The left-hand side converges in $L^2(\Omega)$ by
$\sup_{0\le u\le1}\|B_H(u)\|_{L^2}<\infty$, while Lemma~\ref{lem:hilbert}
turns the $L^p$-convergence of $h^{(q)}_{s,t}$ into convergence of the
right-hand side in $L^2(\Omega)$.  This proves the integral representation;
the Gaussianity and variance identity follow from the deterministic-integrand
isometry.
\end{proof}

We shall also use the following elementary continuity property of oriented interval indicators.

\begin{lemma}\label{lem:J-cont}
Let $1\le q<\infty$.  If $a_n\to a$ and $c_n\to c$ in $[0,1]$, then
\begin{equation}   \norm{J(a_n,c_n)-J(a,c)}_{L^q[0,1]}\longrightarrow0.   \label{eq:oriented-indicator-continuity}
\end{equation}
Moreover,
\begin{equation}   \norm{J(a,c)}_{L^q[0,1]}=|c-a|^{1/q},   \qquad   \norm{J(a,c)}_{L^\infty[0,1]}\le1.   \label{eq:oriented-indicator-norms}
\end{equation}
\end{lemma}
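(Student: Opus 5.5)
The plan is to prove the two assertions of Lemma~\ref{lem:J-cont} in reverse order, starting with the norm identities \eqref{eq:oriented-indicator-norms}, since they are a direct computation. By definition, $J(a,c)$ equals $\pm\1_{[a\wedge c,\,a\vee c]}$. Its absolute value is therefore the indicator of an interval of length $|c-a|$, which at once gives
\[
\norm{J(a,c)}_{L^q}^q=|c-a| \qquad\text{and}\qquad \norm{J(a,c)}_{L^\infty}\le 1 .
\]

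For the continuity statement \eqref{eq:oriented-indicator-continuity}, the key observation is that the oriented indicator admits the sign-free representation
\[
J(a,c)=\1_{[0,c]}-\1_{[0,a]}\qquad\text{Lebesgue-a.e.}
\]
To check this, take $a\le c$: then the right-hand side is $\1_{(a,c]}$, which agrees with $\1_{[a,c]}$ off a single point. For $a>c$ the right-hand side is $-\1_{(c,a]}$, which agrees with $-\1_{[c,a]}$ a.e. Since $L^q$ norms do not see null sets, the representation suffices for the norm computation.

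Using it, I will apply the triangle inequality to obtain
\[
\norm{J(a_n,c_n)-J(a,c)}_{L^q}\le \norm{\1_{[0,c_n]}-\1_{[0,c]}}_{L^q}+\norm{\1_{[0,a_n]}-\1_{[0,a]}}_{L^q}=|c_n-c|^{1/q}+|a_n-a|^{1/q}.
\]
The final equality holds because each difference $\1_{[0,u]}-\1_{[0,v]}$ is, up to sign and a null set, the indicator of an interval of length $|u-v|$. Both terms tend to $0$ as $n\to\infty$, which proves \eqref{eq:oriented-indicator-continuity}.

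There is no genuine obstacle here. The only delicate point is that the sign convention in $J$ switches when $a_n$ and $c_n$ cross one another. The representation $J(a,c)=\1_{[0,c]}-\1_{[0,a]}$ absorbs that switch automatically, so no case analysis is needed along the sequence. The same representation also gives an alternative proof of the first norm identity.
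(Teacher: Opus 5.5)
Your proof is correct and takes essentially the paper's approach: the paper also decomposes the oriented indicators into ordinary interval indicators and observes that the discrepancy is supported on intervals of vanishing total length. Your a.e.\ identity $J(a,c)=\1_{[0,c]}-\1_{[0,a]}$ makes this explicit. It yields the quantitative bound $|c_n-c|^{1/q}+|a_n-a|^{1/q}$ and handles the case where $a_n$ and $c_n$ swap order without any case distinction, which the paper's brief sketch leaves implicit.
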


\begin{proof}
The norm identities in \eqref{eq:oriented-indicator-norms} are immediate from the definition of $J$.  For
\eqref{eq:oriented-indicator-continuity}, decompose the oriented indicators into ordinary interval indicators.
Their symmetric difference is contained in intervals whose total length tends to
zero when the two endpoints converge.  Hence the $L^q$-norm of the difference
converges to zero for every finite $q$.
\end{proof}

For intervals $I=[u,u+L]$ and $J=[v,v+\ell]$, $0<\ell\le L$, define the
normalized fractional Brownian covariance
\[   \Gamma(I,J):=L^{-H}\ell^{-H} \Cov(\Delta W_H(I),\Delta W_H(J)),
\]
where we let $\Delta W_H(I):=W_H(u+L)-W_H(u)$ and define $\Delta W_H(J)$ analogously.
Put
\[   \theta:=H\wedge(1-H).
\]
We will need the following two-scale covariance bound for fractional Brownian increments.

\begin{lemma}\label{lem:fbm-two-scale-covariance}
There is a constant $C$, depending only on $H$, such that for all
intervals $I,J\subset[0,1]$ with lengths $L\ge \ell$,
\begin{equation}   \abs{\Gamma(I,J)}   \le C\left(\frac{\ell}{L}\right)^\theta .   \label{eq:two-scale-bound}
\end{equation}
If $I$ and $J$ are disjoint and their distance is at least $d>0$, then
\begin{equation}   \abs{\Gamma(I,J)}   \le C L^{1-H}\ell^{1-H}d^{2H-2}.                              \label{eq:separated-bound}
\end{equation}
\end{lemma}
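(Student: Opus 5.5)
Both bounds come from the explicit covariance formula
\[\Cov(\Delta W_H(I),\Delta W_H(J))=\tfrac12\big(|v+\ell-u|^{2H}+|v-u-L|^{2H}-|v-u|^{2H}-|v+\ell-u-L|^{2H}\big),\]
which can be read as a second-order difference of $f(x)=\tfrac12|x|^{2H}$. Write it as $\tfrac12\big(F(v+\ell)-F(v)\big)$ with $F(y):=|y-u|^{2H}-|y-u-L|^{2H}$. Equivalently, by stationarity, it is $\E[W_H(L)(W_H(w+\ell)-W_H(w))]$ with $w=v-u$.

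\emph{The separated bound.} Suppose $\operatorname{dist}(I,J)\ge d$. Apply the mean value theorem twice, or equivalently write the covariance as $H(2H-1)\int_I\int_J|x-y|^{2H-2}\,\dd x\,\dd y$, which is valid for every $H\ne1/2$ as an honest double integral because the intervals are disjoint. Then
\[|\Cov|\le C L\ell d^{2H-2}.\]
Dividing by $L^H\ell^H$ gives $CL^{1-H}\ell^{1-H}d^{2H-2}$. For $H=1/2$ the covariance vanishes on disjoint intervals.

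\emph{The two-scale bound.} I would split into two regimes, depending on how far $J$ is from $I$.

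\emph{Case (a): $J$ is far away.} Suppose $J$ is at distance $\ge L$ from the endpoints of $I$, or more precisely that the points $v-u$ and $v-u-L$ are both at distance $\ge\ell$ from $\{0\}$, with appropriate care. Use $|F'(y)|\le C\big(|y-u|^{2H-1}\vee\dots\big)$. More directly, the separated bound with $d\ge L$ (in the case $\ell\le L\le d$) gives
\[|\Gamma|\le C(\ell/L)^{1-H}(L/d)^{2-2H}\le C(\ell/L)^{1-H}.\]

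\emph{Case (b): $J$ is near an endpoint of $I$.} Use the first-order bound on each pair of terms. Group the four terms as
\[\big(|w+\ell|^{2H}-|w|^{2H}\big)-\big(|w+\ell-L|^{2H}-|w-L|^{2H}\big).\]
Each bracket is a first difference of $|x|^{2H}$ over a step of length $\ell$. For any $x$,
\[\big||x+\ell|^{2H}-|x|^{2H}\big|\le C\ell^{2H}\quad\text{if }H\le1/2,\]
by the $2H$-Hölder continuity of $|x|^{2H}$. If $H>1/2$ the bracket is instead $\le C\ell\,(|x|+\ell)^{2H-1}$, which for $|x|\le 2L$ gives $\le C\ell L^{2H-1}$. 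Dividing by $L^H\ell^H$:
\begin{itemize}
\item for $H\le1/2$ one obtains $C(\ell/L)^H$;
\item for $H>1/2$ one obtains $C(\ell/L)^{1-H}$.
\end{itemize}
In either case this is $\le C(\ell/L)^\theta$ since $\ell\le L$. Actually this first-order argument works for all positions of $J$ when $H\le1/2$.

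For $H>1/2$ and $|w|$ large, the first-order bound is not enough and I would use the double integral instead. It gives
\[|\Cov|\le C\,\ell\int_I|x-y_0|^{2H-2}\,\dd x\cdot(\text{sup over }y_0\in J),\]
and $\int_I|x-y_0|^{2H-2}\,\dd x\le CL^{2H-1}$ uniformly in $y_0$, since the kernel is integrable. Hence $|\Cov|\le C\ell L^{2H-1}$ and $|\Gamma|\le C(\ell/L)^{1-H}$ for every position of $J$. This unifies the cases.

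\emph{Main obstacle.} The only delicate point is the uniform-in-position bookkeeping: $J$ may overlap $I$ or straddle its endpoints, and this is exactly what the uniform integrability of $|x-y|^{2H-2}$ (for $H>1/2$) and Hölder continuity of $|x|^{2H}$ (for $H\le1/2$) handle. Everything else is routine.
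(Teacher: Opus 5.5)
Your proposal is correct and follows essentially the same route as the paper. The separated bound comes from the double-integral representation $H(2H-1)\int_I\int_J|x-y|^{2H-2}\,\dd x\,\dd y$. For the two-scale bound, you use the separated bound with $d\ge L$ in the far case; in the near case you write the covariance as two first differences of $|x|^{2H}$ over steps of length $\ell$ and invoke $2H$-H\"older continuity for $H\le 1/2$ or local Lipschitz continuity for $H>1/2$, exactly as the paper does after rescaling $L$ to $1$.

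Your closing remark that the case split can be dropped is a valid minor streamlining: the H\"older bound is global for $H\le1/2$, and for $H>1/2$ one has $\sup_{y}\int_I|x-y|^{2H-2}\,\dd x\le CL^{2H-1}$. One small nit: in the near case $|w-L|$ can reach about $3L$ rather than $2L$, which affects only constants.
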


\begin{proof}
For \eqref{eq:separated-bound}, suppose that $u+L\le v$.
If $H=1/2$, the covariance is zero.  If $H\ne1/2$, the covariance
identity gives
\[
 \Cov(\Delta W_H(I),\Delta W_H(J))
 =
 H(2H-1)\int_u^{u+L}\int_v^{v+\ell}\abs{y-x}^{2H-2}\dd y\dd x
\]
where the constant $H(2H-1)$ is negative for $H<1/2$.  Since the
intervals are disjoint, taking absolute values yields
\[   \abs{\Cov(\Delta W_H(I),\Delta W_H(J))}   \le C L\ell\,d^{2H-2}.
\]
After division by $L^H\ell^H$, this proves \eqref{eq:separated-bound}.

It remains to prove \eqref{eq:two-scale-bound}.  If the distance between
$I$ and $J$ is at least $L$, then \eqref{eq:separated-bound} gives
\[   \abs{\Gamma(I,J)}   \le C\left(\frac{\ell}{L}\right)^{1-H}   \le C\left(\frac{\ell}{L}\right)^\theta .
\]
If the two intervals are closer than $L$, then after translating and scaling
by $L$ we may write the longer interval as $[a,a+1]$, the shorter one as
$[c,c+q]$, where $q=\ell/L$, and all endpoints lie in a fixed bounded
interval, say $[-2,3]$.  With $f(x)=|x|^{2H}$, the scaled covariance is
\begin{align*}
q^H|\Gamma(I,J)|&=L^{-2H}|\Cov(\Delta W_H(I),\Delta W_H(J))|\\
&=\frac12\Big(|a+1-c|^{2H}-|a+1-c-q|^{2H}-|a-c|^{2H}+|a-c-q|^{2H}\Big).
\end{align*}
This is a sum of two increments of $f(x):=|x|^{2H}$, both over intervals of length $q$
and with arguments in a fixed bounded set.  On that set, $f$ is H\"older
continuous with exponent $2H$ if $H\le1/2$, and Lipschitz if $H>1/2$.
Hence,
\[q^H|\Gamma(I,J)|\le  \begin{cases}Cq^{2H}&\text{if $H\le1/2$,}\\
Cq&\text{if  $H>1/2$.  }
\end{cases}
\]
Therefore, $|\Gamma(I,J)|$ is bounded by 
 $Cq^H$ for $H\le1/2$ and by $Cq^{1-H}$ for  $H>1/2$.
This  proves~\eqref{eq:two-scale-bound}.
\end{proof}

\begingroup
\raggedright
\bibliographystyle{plain}
\bibliography{ww_references}
\endgroup

\end{document}